\newtheorem{thm}{Theorem}[section]
\newtheorem{lemma}[thm]{Lemma}
\newtheorem{proposition}[thm]{Proposition}
\newtheorem{corollary}[thm]{Corollary}
\newtheorem{claim}[thm]{Claim}
\theoremstyle{definition}
\newtheorem{definition}[thm]{Definition}
\newtheorem{remark}[thm]{Remark}
\newtheorem{notation}[thm]{Notation}
\newtheorem{ex}[thm]{Example}
\newtheorem{exmuk}{Example}
\newtheorem{example}{Example}
\newcommand{\pr}{\mathbb{P}}
\newcommand{\Z}{\mathbb{Z}}
\newcommand{\Q}{\mathbb{Q}}
\newcommand{\R}{\mathbb{R}}
\newcommand{\C}{\mathbb{C}}
\newcommand{\NE}{\operatorname{NE}}
\newcommand{\Nef}{\operatorname{Nef}}
\newcommand{\Eff}{\operatorname{Eff}}
\newcommand{\Bl}{\operatorname{Bl}}
\newcommand{\Exc}{\operatorname{Exc}}
\newcommand{\Spec}{\operatorname{Spec}}
\newcommand{\Pic}{\operatorname{Pic}}
\newcommand{\Ext}{\operatorname{Ext}}
\newcommand{\cont}{\operatorname{cont}}
\newcommand{\Mod}{\operatorname{mod}}
\newcommand{\sO}{\mathcal{O}}
\newcommand{\sN}{\mathcal{N}}
\newcommand{\sE}{\mathcal{E}}
\newcommand{\sQ}{\mathcal{Q}}
\newcommand{\sX}{\mathcal{X}}
\newcommand{\sL}{\mathcal{L}}
\newcommand{\sU}{\mathcal{U}}
\newcommand{\sH}{\mathcal{H}}
\newcommand{\cansub}{\subset_{\operatorname{can}}\,}
\title{Simple normal crossing Fano varieties and log Fano manifolds}
\author{Kento Fujita}
\begin{document}
\maketitle
\begin{abstract}{\noindent A projective log variety $(X, D)$ is called 
``a log Fano manifold" if $X$ is smooth and if $D$ is a reduced 
simple normal crossing divisor on $X$ with $-(K_X+D)$ ample. 
The $n$-dimensional log Fano manifolds $(X, D)$ with nonzero $D$ 
are classified in this article 
when the log Fano index $r$ of 
$(X, D)$ satisfies either $r\geq n/2$ with $\rho(X)\geq 2$ or $r\geq n-2$. 
This result is a partial generalization of the classification of 
logarithmic Fano threefolds by Maeda. }
\end{abstract}

\section{Introduction}\label{introsection}

As is well known, Fano varieties play an essential roll in various situations, especially 
in birational geometry. Many algebraic geometers have been studying 
Fano varieties, in both smooth and singular cases. 
During the past 30 years, researches of smooth Fano varieties, 
so called \emph{Fano manifolds}, have been advanced very much 
by using the theory of extremal rays; 
Fano manifolds can be classified up to dimension three. 
We have also known that the anticanonical degree of $n$-dimensional Fano manifolds 
are bounded for an arbitrary $n$. 
However, very little is known for higher dimensional case. 
Nowadays, many people use the \emph{Fano index} of a Fano manifold, 
the largest positive integer $r$ 
such that the anticanonical divisor is $r$ times a Cartier divisor, 
or the \emph{Fano pseudoindex} of a Fano manifold, the minimum 
of the intersection numbers of the anticanonical divisor with irreducible rational curves, 
in order to classify higher dimensional Fano manifolds. 
For example, the $n$-dimensional Fano manifolds with the Fano indices $r\geq n-2$ 
have been classified (\cite{KO,fujitabook,isk77,MoMu,mukai,wis,wisn90,wisn91}). 
Another example is the \emph{Mukai conjecture} \cite[Conjecture 4]{mukaiconj} 
(resp.\ \emph{generalized Mukai conjecture}), 
that the product of the Picard number and the Fano index 
(resp.\ the Fano pseudoindex) minus one 
is less than or equal to the dimension for any Fano manifold. 
The generalized Mukai conjecture is one of the most significant topics 
in the classification theory of Fano manifolds and is still open even now 
excepts for the case $n\leq 5$ or $\rho(X)\leq 3$ (see \cite{Occ,NO}). 
One of the most significant results related to the Mukai conjecture is due to Wi\'sniewski 
\cite{wisn90,wisn91}; he has classified $n$-dimensional Fano manifolds with 
the Fano index $r>n/2$ and the Picard number $\rho(X)\geq 2$. 

On the other hand, it is difficult to classify singular irreducible ($\Q$-)Fano varieties 
since we cannot use deformation theory successfully. In this article, 
we consider \emph{simple normal crossing Fano varieties}, that is, 
projective simple normal crossing varieties 
whose dualizing sheaves are dual of ample invertible sheaves. 
The concept of simple normal crossing Fano varieties is one of the most natural 
generalization of that of Fano manifolds. It also seems to be more tractable than 
that of singular irreducible ($\Q$-)Fano varieties. 
We also consider the \emph{snc Fano indices} 
(resp.\ the \emph{snc Fano pseudoindices})
of simple normal crossing Fano varieties; the largest positive integer $r$ 
such that the dualizing sheaf is $(-r)$-th power of 
some ample invertible sheaf (resp.\ the minimum of the intersection number of the 
dual of the dualizing sheaf and a rational curve). 
It is expected that 
simple normal crossing Fano varieties with large snc Fano indices 
have various applications 
as smooth Fano varieties do (for example, see \cite{Kol11}). 

It is natural to consider their irreducible components with the conductor divisors 
in order to investigate simple normal crossing Fano varieties. 
The component with the conductor has been considered by Maeda in his studies of 
a logarithmic Fano variety \cite{Maeda}. He has classified 
logarithmic Fano varieties of dimension at most three. 
A logarithmic Fano variety is called a \emph{log Fano manifold} in this article, 
which is a pair $(X, D)$ consisting of a smooth projective variety $X$ 
and a reduced simple normal crossing divisor $D$ on $X$ such that $-(K_X+D)$ is ample. 
These manifolds share similar properties with Fano manifolds but 
there are some differences between them. 
For example, as Maeda pointed out, 
the degree $(-(K_X+D)^{\cdot n})$ cannot be bounded 
for log Fano manifolds $(X, D)$ of fixed dimension $n\geq 3$.  

We also introduce the concept of the \emph{log Fano index} 
(resp.\ the \emph{log Fano pseudoindex}) of a log Fano manifold, 
the largest positive integer $r$ 
dividing $-(K_X+D)$, i.e., $-(K_X+D)\sim rL$ for a Cartier divisor $L$ 
(resp.\ the minimum of the intersection number of $-(K_X+D)$ and a rational curve). 
Our main new treatment is to consider log Fano manifolds 
with the log Fano indices (or the log Fano pseudoindices). 

The main idea to investigate $n$-dimensional log Fano manifolds $(X, D)$ 
with the log Fano index $r$ (resp.\ the log Fano pseudoindex $\iota$) 
and $D\neq 0$ is the following. 
First, to see the contraction morphism associated to an extremal ray intersecting 
the log divisor positively. We can classify del Pezzo type log Fano manifolds 
using only this strategy in Section \ref{delpezzo_section}. 
Second, one can show that $D$ is an $(n-1)$-dimensional simple normal crossing 
Fano variety such that the snc Fano index is divisible by $r$
(resp.\ the snc Fano pseudoindex is at least $\iota$). 
Hence we can use inductive arguments; 
the larger the snc Fano index or the snc Fano pseudoindex be, 
the simpler the structure of simple normal crossing 
Fano varieties be, as for Fano manifolds. 
As a consequence, we can analyze the structure of $X$ by viewing the 
restriction of contraction morphisms to $D$. 

Now we quickly organize this article. 
Section \ref{junbisection} is a preliminary section. 
We introduce the notion of simple normal crossing varieties and log manifolds 
in Section \ref{snc_section}. 
We also investigate the condition when log manifolds 
(and invertible sheaves on them) are glued as a simple normal crossing variety 
(and an invertible sheaf on it) in Theorem \ref{glue} and Proposition \ref{picglue}. 
We define simple normal crossing Fano varieties and log Fano manifolds 
in Section \ref{sncfano_section} and quickly give some properties 
in Section \ref{firstprop_section}. 
We also show some basic results of bundle structures (Section \ref{bdle_section}), 
extremal contractions (Section \ref{extcont_section}) and the special projective bundles, 
so called the Hirzebruch--Kleinschmidt varieties (Section \ref{property_scroll}). 

In Section \ref{ex_section}, we give various examples of log Fano manifolds with 
large log Fano indices, which occur in the theorems 
in Section \ref{thm_section}. 

In Section \ref{thm_section}, we state the main results of this article. 

In Section \ref{delpezzo_section}, we treat an $n$-dimensional 
log Fano manifold $(X, D)$ with $D\neq 0$ such that the log Fano pseudoindex 
is equal to $n-1$ 
(Proposition \ref{dP}), whose proof is easy but is optimal to understand how to classify 
log Fano manifolds with index data. 

The main purpose of this article, which we discuss in Section \ref{mukaiconjsection}, 
is to classify $n$-dimensional log Fano 
manifolds of the log Fano indices $r$ with nonzero boundaries such that 
$n\leq 2r$ and $\rho(X)\geq 2$ 
(Theorem \ref{mukai0} and Theorem \ref{mukai1}, see also Table \ref{maintable}), 
which is a log version of the treatment of the Mukai conjecture 
by Wi\'sniewski \cite{wisn90,wisn91}. We prove Theorem \ref{mukai1} in 
Section \ref{prf_section}. 
We note that Wi\'sniewski argued the case 
$r>n/2$ and we treat the case $r\geq n/2$ and nonzero boundaries. 
We also note that we do not treat Maeda's case $n=3$ and $r=1$; 
some of the techniques of the proof are similar to Maeda's one but 
we investigate completely different objects to Maeda's one. 

\begin{thm}[= Theorem \ref{mukai0}]\label{main0_intro}
If $(X,D)$ is an $n$-dimensional log Fano manifold 
with the log Fano pseudoindex $\iota>n/2$, $D\neq 0$ and $\rho(X)\geq 2$, 
then $n=2\iota-1$ and 
\[
(X, D)\simeq(\pr[\pr^{\iota-1};0^\iota, m], \,\,
\pr^{\iota-1}\times\pr^{\iota-1})
\]
with $m\geq 0$, 
where the embedding $D\subset X$ is obtained by the canonical projection 
$\pr[\pr^{\iota-1};0^\iota]\cansub\pr[\pr^{\iota-1};0^\iota, m]$. 
$($This is exactly the case 
in Example \ref{2rminusone} in Section \ref{ex_section}.$)$ 
\end{thm}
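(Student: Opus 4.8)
The plan is an induction on $n$ centred on the extremal contraction along which $D$ is positive. Since $\rho(X)\geq 2$ forces $n\geq 2$, the hypothesis $\iota>n/2$ yields $\iota\geq 2$ and $n\leq 2\iota-1$. As $-(K_X+D)$ is ample and $(X,D)$ is log canonical, $\overline{\NE}(X)$ is rational polyhedral with all extremal rays $(K_X+D)$-negative; there is an extremal ray $R_1$ with $D\cdot R_1>0$, for otherwise $-D$ would be nef, which is impossible for a nonzero effective divisor. Put $\varphi_1=\cont_{R_1}\colon X\to Y$. Every rational curve $C$ with $[C]\in R_1$ satisfies $D\cdot C\geq 1$, hence $-K_X\cdot C=-(K_X+D)\cdot C+D\cdot C\geq\iota+1$, so $\ell(R_1)\geq\iota+1>n/2+1$. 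Using the estimate on fibre dimensions of extremal contractions (Section \ref{extcont_section}) together with $D\cdot R_1>0$, I would first rule out $\varphi_1$ being birational, so that $\varphi_1$ is of fibre type with general fibre $F$; then $F$ is smooth, $d:=\dim F\geq\ell(R_1)-1\geq\iota$, $\dim Y=n-d$, and $1\leq\dim Y\leq n-\iota\leq\iota-1$, the lower bound since $\rho(Y)=\rho(X)-1\geq 1$. Because $D\cdot R_1>0$, $D$ meets every fibre, $\varphi_1(D)=Y$, and $(F,D|_F)$ is again a log Fano manifold, with $D|_F\neq 0$, of log Fano pseudoindex $\iota_F\geq\iota>d/2$, and with $d<n$.

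The next step is to pin down $F$. If $\rho(F)\geq 2$, the inductive form of the theorem applied to $(F,D|_F)$ would give $d=2\iota_F-1\geq 2\iota-1\geq n$, contradicting $d\leq n-1$; hence $\rho(F)=1$. By the remark recalled in the introduction, $D$ is an $(n-1)$-dimensional simple normal crossing Fano variety whose snc Fano pseudoindex is at least $\iota$, hence $>(n-1)/2$; combining the classification of such $D$ (obtained inductively) with the facts that $D$ surjects onto $Y$ with fibres $D\cap F\subset F$ and that $F$ has Picard number one, I expect to conclude that $D$ is irreducible, $F\simeq\pr^{d}$, and $D|_F$ is a hyperplane. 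By the bundle criterion — a fibre-type contraction all of whose fibres are projective spaces of constant dimension is a projective bundle over a smooth base — one then obtains that $\varphi_1\colon X=\pr(\sE)\to Y$ is a $\pr^{d}$-bundle for a locally free sheaf $\sE$ of rank $d+1$, that $Y$ is smooth, that $D$ is the $\pr^{d-1}$-subbundle attached to a rank-$d$ quotient $\sE\twoheadrightarrow\sQ$, and that $D\sim\xi-\pi^{*}\sL$ with $\xi=\sO_X(1)$ and $\sL\in\Pic(Y)$.

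Now I would identify $D$, $Y$ and $n$. Being irreducible, $D$ is a Fano manifold (that is, $(D,0)$ is log Fano), and it is a $\pr^{d-1}$-bundle over $Y$, so $\rho(D)=\rho(Y)+1\geq 2$, with Fano pseudoindex $\geq\iota>(\dim D)/2$. By Wi\'sniewski's classification of Fano manifolds of pseudoindex greater than half the dimension with Picard number at least two, together with the fact that $D$ carries a projective bundle structure over $Y$, one gets $D\simeq\pr^{a}\times\pr^{b}$; then $a=d-1$, $a+b=\dim D=n-1\leq 2\iota-2$, and $\min(a+1,b+1)\geq\iota$ force $a=b=\iota-1$. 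Hence $d=\iota$, $Y\simeq\pr^{\iota-1}$, $D\simeq\pr^{\iota-1}\times\pr^{\iota-1}$, and $n=d+\dim Y=2\iota-1$.

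Finally I would reconstruct the pair. Over $Y=\pr^{\iota-1}$ the sheaf $\sE$ has rank $\iota+1$, and the product structure of $D=\pr(\sQ)\simeq\pr^{\iota-1}\times\pr^{\iota-1}$ forces $\sQ$ to be trivial up to twist; since $H^{1}$ of a line bundle on $\pr^{\iota-1}$ vanishes, the extension $0\to\mathcal{K}\to\sE\to\sQ\to 0$ splits, so $\sE\simeq\sO^{\oplus\iota}\oplus\sO(m)$ after a twist. A direct computation then gives $-(K_X+D)=\iota(\xi+\pi^{*}h)$ with $h$ the hyperplane class of $Y$, which is ample if and only if $\sE\otimes\sO_Y(1)$ is an ample bundle, i.e.\ if and only if $m\geq 0$. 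This is exactly the family of Example \ref{2rminusone}, so $(X,D)\simeq(\pr[\pr^{\iota-1};0^{\iota},m],\pr^{\iota-1}\times\pr^{\iota-1})$ with the embedding $D\subset X$ given by the canonical projection $\pr[\pr^{\iota-1};0^{\iota}]\cansub\pr[\pr^{\iota-1};0^{\iota},m]$. The main obstacle is the second paragraph: ruling out every alternative for $F$ other than $\pr^{d}$ with a single hyperplane for $D|_F$ — quadric and del Pezzo fibres, reducible boundaries, and bases of higher Picard number — which is precisely where one must feed the inductive classification of simple normal crossing Fano varieties and the classification of Fano manifolds of large pseudoindex into the positivity of $-(K_X+D)$, and the inequality $\iota>n/2$ is exactly what makes those numerics close up.
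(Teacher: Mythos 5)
Your outline has the right overall flavor (induction on $n$, an extremal ray $R$ with $(D\cdot R)>0$, the length bound $l(R)\geq\iota+1$, a projective-bundle structure, and the splitting of the normal-sheaf extension at the end), but the middle of the argument — which you yourself flag as ``the main obstacle'' — is a genuine gap, and the route you propose for closing it does not work as ordered. The crux of the paper's proof is to classify $D$ \emph{first}: each irreducible component $(D_1,E_1)$ of $D$ meeting $R$ positively is an $(n-1)$-dimensional log Fano manifold of pseudoindex $\geq\iota>(n-1)/2$; Lemma \ref{rhoone} \eqref{rhoone2} (using $l(R)\geq 3$ and $\rho(X)\geq 2$) forces $\rho(D_1)\geq 2$ — a case your sketch never addresses — and then the inductive hypothesis forces $E_1=0$ (otherwise $n-1=2\iota_1-1$ with $\iota_1\geq\iota$ gives $n\geq 2\iota$), so $D=D_1$ is an irreducible Fano manifold with $\rho\geq 2$ and pseudoindex $>\dim D/2+1/2$, and Occhetta's theorem \cite{Occ} yields $D\simeq\pr^{\iota-1}\times\pr^{\iota-1}$ and $n=2\iota-1$ at one stroke. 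Only after this does one see that $\pi|_D$ cannot be birational, that $\pi$ is of fiber type, that $\dim(\pi^{-1}(y)\cap D)=\iota-1$ for every $y$, and hence (Proposition \ref{structure_cor} \eqref{structure_cor1}) that $\pi$ is a $\pr^\iota$-bundle.

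Your inverted order — pin down the general fiber $F$ before $D$ — stalls at two points. First, the claim that fiber-dimension estimates alone rule out a birational $\pi$ is false for $n\geq 5$: Wi\'sniewski's inequality only gives $\dim\Exc(\pi)+\dim F\geq n+\iota$, which is compatible with $\dim\Exc(\pi)=n-1$ once $\iota\leq n-2$, i.e.\ for all $n\geq 5$ in your range; the paper excludes birational type only via the known structure of $D$. Second, with $\rho(F)=1$ you get (Proposition \ref{rhoonefano}) that $F$ is Fano of pseudoindex $\geq\iota+1$, but since a priori $\iota\leq d=\dim F\leq 2\iota-2$, the bound $\iota+1$ can be $\leq d$, so \cite{CMSB} does not give $F\simeq\pr^d$, and you cannot determine $d$ (equivalently $\dim Y$) without already knowing that $D\simeq\pr^{\iota-1}\times\pr^{\iota-1}$ admits no algebraic fiber space onto a base of dimension strictly between $0$ and $\iota-1$. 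So the ``classification of $D$'' you invoke in passing is not an auxiliary input but the whole engine, and it has to be run before, not after, the analysis of $F$. Finally, your splitting argument via $H^1(\pr^{\iota-1},\sO(k))=0$ needs $\iota\geq 3$; the base case $n=3$, $\iota=2$ (where $Y=\pr^1$) must be handled separately, as the paper does in Proposition \ref{dP}.
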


\begin{thm}[= Main Theorem \ref{mukai1}]\label{main1_intro}
Let $(X, D)$ be a $2r$-dimensional log Fano manifold with the log Fano index $r\geq 2$,  $D\neq 0$ and $\rho(X)\geq 2$. 
Then $(X, D)$ is exactly in the Examples 
\ref{burouappu}, \ref{pP}, \ref{kayaku}, \ref{fanoQ}, \ref{rthree}, 
\ref{rtwo}, \ref{Tp}, \ref{Pp}, \ref{zerozeroone}, \ref{zeroonebig}, 
\ref{zerozerobig} 
$($See Table \ref{maintable} for the list of $(X, D)$$).$
\end{thm}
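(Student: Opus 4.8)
The plan is to classify $2r$-dimensional log Fano manifolds $(X,D)$ with log Fano index $r\geq 2$, $D\neq 0$, $\rho(X)\geq 2$ by a careful analysis of extremal contractions, following the strategy outlined in the introduction. First I would exploit the fact that, since $-(K_X+D)\sim rL$ is ample and $D\neq 0$, some irreducible component $D_1$ of $D$ meets $-(K_X+D)$ positively, and one shows $L\cdot C>0$ for curves $C$ moving in $D_1$. More structurally, by the adjunction-type results quoted from the preliminary sections, each component $D_i$ of $D$, together with the conductor divisor $D|_{D_i}$, is an $(n-1)$-dimensional simple normal crossing Fano variety whose snc Fano index is divisible by $r$. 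Since $\dim D_i = 2r-1 < 2r$, the snc Fano index $r$ is at least $(\dim D_i +1)/2$, so $D_i$ is highly constrained — indeed we are essentially in the range where $D_i$ (or rather $(D_i, D|_{D_i})$) is itself covered by the classification of Theorem~\ref{main0_intro} or is of del Pezzo type, i.e.\ it is forced to be a projective space or a Hirzebruch--Kleinschmidt variety (a projectivized split bundle over $\pr^{r-1}$ or $\pr^r$) with its natural boundary.

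Next I would run the extremal-ray machinery on $X$ itself. Consider an extremal ray $R$ with $(K_X+D)\cdot R<0$; the associated contraction $\varphi_R\colon X\to Y$ restricts to $D$, and the key point is to control $\varphi_R|_{D_i}$ using the classification of the components $D_i$ just obtained. One separates cases according to whether $R$ intersects $D$ positively, trivially, or whether $D$ dominates $Y$. If $\varphi_R$ is of fiber type, the general fiber $F$ is a Fano manifold of dimension $\geq n - \dim Y$ with Fano index divisible by $r$, and since $r$ is large relative to $\dim F$, Kobayashi--Ochiai and the Wi\'sniewski-type bounds pin down $F$ as $\pr^{r}$, $\pr^{2r-k}$, a quadric, or a projective bundle; simultaneously $D\cap F$ is a boundary divisor making $(F, D\cap F)$ a lower-dimensional log Fano manifold to which induction applies. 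If $\varphi_R$ is birational, the exceptional locus and its image are analyzed via the classification of extremal contractions from Section~\ref{extcont_section}, and one shows such birational contractions are blow-ups of smooth centers, again with $D$ in a controlled position (the blow-ups producing Examples~\ref{burouappu}, etc.). Because $\rho(X)\geq 2$ there are at least two such rays (or one fiber-type ray whose target still has $\rho\geq 1$), and by playing the contractions against each other — typically one of them exhibiting $X$ as a projective bundle and the restriction to $D$ identifying the bundle data — one reconstructs $X$ together with $D$ up to the finite list. Each surviving configuration is then matched against the specific Examples \ref{burouappu}, \ref{pP}, \ref{kayaku}, \ref{fanoQ}, \ref{rthree}, \ref{rtwo}, \ref{Tp}, \ref{Pp}, \ref{zerozeroone}, \ref{zeroonebig}, \ref{zerozerobig}, verifying in each case both that the numerical invariants agree and that the log Fano condition and simple-normal-crossing condition hold, yielding the entries of Table~\ref{maintable}.

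I expect the main obstacle to be the bookkeeping when $\varphi_R$ is birational and when multiple components of $D$ are present: one must simultaneously track how the contraction acts on each $D_i$, how the $D_i$ intersect one another, and ensure the snc condition is preserved, all while the index $r$ is only assumed $\geq n/2$ rather than $>n/2$ — the boundary case $r = n/2$ (the difference from Wi\'sniewski's hypothesis) typically admits extra contractions of conic-bundle or quadric-bundle type that must be handled by hand, and these are precisely the cases producing the quadric examples and the projective-bundle examples with specific splitting type. A secondary subtlety is that, unlike irreducible Fano manifolds, here $-(K_X+D)$ need not generate the same ray as a face of the nef cone touching $D$ positively, so one must be careful that the extremal rays chosen actually interact with the boundary; this is managed by the observation that $D$ itself is ample-related (each $D_i$ meets $-(K_X+D)$ and is an snc Fano variety), which forces $D$ to meet every fiber-type contraction nontrivially in a controlled way. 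Once these cases are organized, each branch reduces to a finite computation matching Table~\ref{maintable}.
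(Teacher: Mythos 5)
Your overall strategy --- view each boundary component $(D_1,E_1)$ as a $(2r-1)$-dimensional log Fano manifold whose log Fano index is divisible by $r$, classify it by the ``index $>$ half the dimension'' results, and then reconstruct $X$ from the contraction $\pi$ restricted to $D_1$ --- is exactly the paper's. But there is a genuine gap in the reduction step that drives the whole argument. You claim the components are ``forced to be a projective space or a Hirzebruch--Kleinschmidt variety with its natural boundary.'' This is not the correct list, and the missing cases are not peripheral. Two things are needed. First, you must prove $\rho(D_1)\geq 2$ before any classification of $D_1$ applies; this is done in the paper via Lemma \ref{rhoone} \eqref{rhoone2}: since $l(R)\geq r+1\geq 3$, if $\rho(D_1)=1$ then $X$ would be a Fano manifold with $\rho(X)=1$, contradicting $\rho(X)\geq 2$. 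Second, when the conductor $E_1$ is zero --- which happens in most cases --- $D_1$ is an honest $(2r-1)$-dimensional Fano manifold of index $r$ with $\rho(D_1)\geq 2$, and one must invoke Wi\'sniewski's classification (Theorem \ref{wisn_classify}), whose list is $\pr^{r-1}\times\Q^r$, $\pr_{\pr^r}(T_{\pr^r})$ and $\pr[\pr^r;0^{r-1},1]$. The first two are neither projective spaces nor projectivized split bundles, and they are precisely the sources of Examples \ref{fanoQ}, \ref{rthree}, \ref{rtwo}, \ref{Tp} and the smooth case of \ref{Pp}. Following your list literally, roughly half the entries of Table \ref{maintable} would never be reached.

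A secondary weakness is in the reconstruction of $X$ from $\pi$. Identifying the abstract isomorphism type of a general fiber $F$ (via Kobayashi--Ochiai or induction on $(F,D\cap F)$) is not enough: the proof needs the global relative structure over $Y$ --- $\pr^{\iota}$-bundle with $\pr^{\iota-1}$-subbundle, $\sQ^{r+1}$-bundle with $\sQ^{r}$-subbundle, blow-up along a smooth center of codimension $r+2$, or quadric subfibration of a $\pr^{r+1}$-bundle --- which is the content of Proposition \ref{structure_cor} (resting on Wi\'sniewski's inequality together with the relative results of Fujita and Andreatta--Ballico--Wi\'sniewski). After that one still has to compute $(\pi|_{D_1})_*\sN_{D_1/X}$, decide whether the extension $0\to\sO_Y\to\pi_*\sO_X(D_1)\to(\pi|_{D_1})_*\sN_{D_1/X}\to 0$ splits (the $\Ext^1$ computation is exactly what separates $\pr^r\times\pr^r$ from $\pr_{\pr^r}(T_{\pr^r}\oplus\sO(m))$), and establish the sign conditions $m\geq 0$. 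Most notably, the case $D_1\simeq\pr^{r-1}\times\Q^r$ fibered over $Y\simeq\pr^{r-1}$ forces a quadric-bundle analysis and a relative projection from a section, producing the double cover of Example \ref{fanoQ}; your outline, which expects only projective bundles and blow-ups, does not anticipate this branch at all.
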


As a consequence of Theorems \ref{mukai0}, \ref{mukai1}, combining Maeda's result, 
we have classified $n$-dimensional log Fano manifolds 
with the log Fano indices $r\geq n-2$ and nonzero boundaries, 
which we discuss in Section \ref{mukaisection} (Corollary \ref{coindex3}).

\smallskip

\noindent\textbf{Acknowledgements.}
The author is grateful to Professor J\'anos Koll\'ar for showing him the earlier version 
of \cite{Kol11} and for making a suggestion to classify snc Fano varieties. 
He also would like to expresses his gratitude to Professors Shigefumi Mori 
and Noboru Nakayama 
for their warm encouragements and for making various suggestions. 
He thanks Professor Eiichi Sato and Doctor Kazunori Yasutake for suggesting him to 
replace log Fano index by log Fano pseudoindex in Theorem \ref{mukai0} during the 
participation of Algebraic Geometry Seminar in Kyushu University. 
The author is partially supported by JSPS Fellowships for Young Scientists.

\medskip

\noindent\textbf{Notation and terminology.}
We always work in the category of algebraic 
(separated and finite type) schemes over a fixed algebraically closed field $\Bbbk$ 
of characteristic zero. 
A \emph{variety} means a connected and reduced algebraic scheme. 
For the theory of extremal contraction, we refer the readers to \cite{KoMo}. 
For a complete variety $X$, the Picard number of $X$ is denoted by  $\rho(X)$. 
For a smooth projective variety $X$, we define $\Eff(X)$ (resp.\ $\Nef(X)$) 
to be the effective (resp.\ nef) cone which is defined as the cone in $N^1(X)$ 
spanned by classes of effective (resp.\ nef) divisors on $X$. 
For a smooth projective variety $X$ and a $K_X$-negative extremal ray $R\subset\overline{\NE}(X)$,
\[
l(R):=\min\{(-K_X\cdot C)\mid C\text{ is a rational curve with } [C]\in R\}
\]
is called the \emph{length} $l(R)$ of $R$. 
A rational curve $C\subset X$ with 
$[C]\in R$ and $(-K_X\cdot C)=l(R)$ is called \emph{a minimal rational curve of $R$}.

For a morphism of algebraic schemes $f\colon X\rightarrow Y$, we define the 
\emph{exceptional locus} $\Exc(f)$ \emph{of $f$} by 
\[
\Exc(f):=\{x\in X\mid f \text{ is not isomorphism around }x\}.
\]

For a complete variety $X$, an invertible sheaf $\sL$ 
on $X$ and for a nonnegative integer $i$, 
let us denote the dimension of the $\Bbbk$-vector space $H^i(X,\sL)$ by $h^i(X,\sL)$. 
We also define $h^i(X,L)$ as $h^i(X,\sO_X(L))$
for a Cartier divisor $L$ on $X$. 

For algebraic schemes (or coherent sheaves on a fixed algebraic scheme) 
$X_1,\dots,X_m$, the projection is denoted by 
$p_{i_1,\dots,i_k}\colon\prod_{i=1}^mX_i\rightarrow\prod_{j=1}^kX_{i_j}$ for 
any $1\leq i_1<\cdots<i_k\leq m$. 

For an algebraic scheme $X$ and a locally free sheaf of finite rank $\sE$ on $X$, 
let $\pr_X(\sE)$ be the projectivization of $\sE$ in the sense of Grothendieck 
and $\sO_\pr(1)$ be the tautological invertible sheaf. We usually denote 
the projection by $p\colon\pr_X(\sE)\rightarrow X$. 
For locally free sheaves $\sE_1,\dots,\sE_m$ of finite rank on $X$ 
and $1\leq i_1<\cdots<i_k\leq m$, 
we sometimes 
denote the embedding obtained by the natural projection 
$p_{i_1,\dots,i_k}\colon\bigoplus_{i=1}^m\sE_i\rightarrow\bigoplus_{j=1}^k\sE_{i_j}$ by 
\[
\pr_X\Biggl(\bigoplus_{j=1}^k\sE_{i_j}\Biggr)\quad\cansub\quad
\pr_X\Biggl(\bigoplus_{i=1}^m\sE_i\Biggr)
\]
and we call that this embedding is \emph{obtained by the canonical projection}.

The symbol $\Q^n$ (resp.\ $\sQ^n$) means a smooth (resp.\ possibly non-smooth) 
hyperquadric in $\pr^{n+1}$ for $n\geq 2$. 
We write $\sO_{\Q^n}(1)$ (resp.\ $\sO_{\sQ^n}(1)$) as the invertible sheaf 
which is the restriction of $\sO_{\pr^{n+1}}(1)$ under the natural embedding. 
We sometimes write $\sO(m)$ instead of $\sO_{\Q^n}(m)$ 
(or $\sO_{\sQ^n}(m)$, $\sO_{\pr^n}(m)$) for simplicity. 

For an irreducible projective variety $V$ with $\Pic(V)=\Z$,  
the ample generator $\sO_V(1)$ of $\Pic(V)$, a nonnegative integer $t$ 
and integers $a_0,\dots,a_t$, we denote the projective space bundle 
\[
\pr_V(\sO_V(a_0)\oplus\dots\oplus\sO_V(a_t))\quad\text{by}\quad\pr[V; a_0,\dots,a_t]
\]
for simplicity. 
(We often denote 
\[
\pr[V; \underbrace{b_0,\dots,b_0}_{n_0 \text{ times}},\dots,
\underbrace{b_u,\dots,b_u}_{n_u \text{ times}}]
\quad\text{by}\quad\pr[V; b_0^{n_0},\dots,b_u^{n_u}]
\]
for any integers $b_0,\dots,b_u$ and positive integers $n_0,\dots,n_u$.)
We also denoted by $\sO(m; n)$ the invertible sheaf 
\[
p^*\sO_V(m)\otimes\sO_\pr(n)\quad\text{on}\quad\pr[V; a_0,\dots,a_t]
\]
for any integers $m$ and $n$, 
where $p\colon \pr[V; a_0,\dots,a_t]\rightarrow V$ is the projection and 
$\sO_\pr(1)$ is the tautological invertible sheaf with respect to $p$. 
For any $0\leq i_1<\cdots<i_k\leq t$, we denote the embedding 
\[
\pr_V(\sO_V(a_{i_1})\oplus\dots\oplus\sO_V(a_{i_k}))
\cansub\pr_V(\sO_V(a_0)\oplus\dots\oplus\sO_V(a_t))
\]
obtained by the canonical projection 
by $\pr[V; a_{i_1},\dots,a_{i_k}]\cansub\pr[V; a_0,\dots,a_t]$, and we also call that 
this embedding is \emph{obtained by the canonical projection}.


\section{Preliminaries}\label{junbisection}


\subsection{Snc varieties and log manifolds}\label{snc_section}

First, we define simple normal crossing varieties and log manifolds.

\begin{definition}[normal crossing singularities]\label{ncdfn}
Let $X$ be a variety and $x\in X$ be a closed point. 
We say that $X$ has \emph{normal crossing singularity} at $x$ if 
$X$ is equi-dimensional of dimension $n$ around $x$ and if the completion of the 
local ring $\sO_{X,x}$ is isomorphic to 
\[
\Bbbk[[x_1,\dots,x_{n+1}]]/(x_1\cdots x_k)
\]
for some $1\leq k\leq n+1$. 
\end{definition}

\begin{definition}[snc varieties and log manifolds]\label{sncdfn}
\begin{enumerate}
\renewcommand{\theenumi}{\arabic{enumi}}
\renewcommand{\labelenumi}{(\theenumi)}
\item
A \emph{simple normal crossing variety} (\emph{snc} variety, for short) 
is a variety $\sX$ 
having normal crossing singularities at any closed points $x\in\sX$ 
and each irreducible component of $\sX$ is a smooth variety. 
\item
A \emph{log manifold} is a pair $(X,D)$ such that $X$ is a smooth variety 
and $D$ is a simple normal crossing divisor on $X$, that is, 
$D$ is reduced and each connected component of $D$ 
is an snc variety. 
\end{enumerate}
\end{definition}

\begin{remark}\label{nrmlrmk}
Let $\sX$ be an snc variety and $\nu\colon\overline{X}\rightarrow\sX$ be 
the normalization morphism of $\sX$. 
Then $\overline{X}$ is exactly the disjoint union 
of all the irreducible components of $\sX$ and $\nu$ is induced by natural embeddings. 
\end{remark}

\begin{definition}[conductor divisor]\label{conddfn}
Let $\sX$ be an snc variety with the irreducible decomposition 
$\sX=\bigcup_{1\leq i\leq m}X_i$. For any distinct $1\leq i$, $j\leq m$, 
the intersection $X_i\cap X_j$ can be seen as a smooth divisor $D_{ij}$ in $X_i$. 
We define 
\[
D_i:=\sum_{j\neq i}D_{ij}
\]
and call it the \emph{conductor divisor} in $X_i$ (with respect to $\sX$). 
We often write that 
$(X_i, D_i)\subset\sX$ is an irreducible component 
for the sake of simplicity. 
We also write $\sX=\bigcup_{1\leq i\leq m}(X_i, D_i)$ 
for emphasizing the conductor divisors. 
\end{definition}

\begin{remark}\label{adjunctionrmk}
If $\sX$ is an snc variety, then $\sX$ has 
an invertible dualizing sheaf $\omega_{\sX}$ since $\sX$ has 
only Gorenstein singularities. 
Furthermore, if $(X,D)\subset\sX$ is an irreducible component 
with the conductor divisor, then $(X, D)$ is a log manifold and 
\[
\omega_{\sX}|_X\simeq\sO_X(K_X+D)
\]
by the adjunction formula, where $K_X$ denotes the canonical divisor of $X$. 
\end{remark}

\begin{definition}[{strata and lc centers \cite{fujino}}]\label{lccenter}
\begin{enumerate}
\renewcommand{\theenumi}{\arabic{enumi}}
\renewcommand{\labelenumi}{(\theenumi)}
\item
Let $\sX$ be an snc variety with the irreducible decomposition 
$\sX=\bigcup_{1\leq i\leq m}X_i$.  
\begin{itemize}
\item
A \emph{stratum} of $\sX$ is an irreducible component of $\bigcap_{i\in I}X_i$ 
with the reduced scheme structure for a subset $I\subset\{1,\dots,m\}$. 
\item
A \emph{minimal stratum} of $\sX$ is a stratum of $\sX$ which is a minimal 
in the set of strata of $\sX$ under the partial order of the inclusion. 
\end{itemize}
\item
Let $(X, D)$ be a log manifold. 
\begin{itemize}
\item
An \emph{lc center} of $(X, D)$ is a 
stratum of some connected component of $D$. 
\item
A \emph{minimal lc center} of $(X, D)$ is an lc center of $(X, D)$ 
which is a minimal 
in the set of lc centers of $(X, D)$ under the partial order of the inclusion. 
\end{itemize}
\end{enumerate}
\end{definition}

Now, we give the theorem that 
decompressing log manifolds to snc varieties. 
Recall that an algebraic scheme has the \emph{Chevalley-Kleiman property} 
if every finite subscheme is contained in an open affine subscheme 
(see \cite[Definition 47]{quot}). For example, any projective variety has the 
Chevalley-Kleiman property. 

\begin{thm}\label{glue}
Let $(X_1, D_1),\dots,(X_m, D_m)$ be $n$-dimensional log manifolds such that 
$X_i$ has the Chevalley-Kleiman propeety for any $1\leq i\leq m$. 
Assume that $D_i$ is decomposed into 
\[
D_i=\sum_{j\neq i,\,\, 1\leq j\leq m}D_{ij}
\]
such that $D_{ij}$ are smooth $($possibly, non connected or empty$)$ divisor 
for any $1\leq i\leq m$, and 
there exists isomorphisms 
\[
\phi_{ij}\colon D_{ij}\rightarrow D_{ji}
\]
for all distinct $1\leq i, j\leq m$. We assume furthermore that 
\begin{enumerate}
\renewcommand{\theenumi}{\arabic{enumi}}
\renewcommand{\labelenumi}{\rm{(\theenumi)}}
\item\label{glue1}
$\phi_{ji}=\phi_{ij}^{-1}$ for any distinct $i$, $j$, 
\item\label{glue2}
$\phi_{ij}(D_{ij}\cap D_{ik})=D_{ji}\cap D_{jk}$ and 
$\phi_{jk}|_{D_{ji}\cap D_{jk}}\circ\phi_{ij}|_{D_{ij}\cap D_{ik}}=\phi_{ik}|_{D_{ij}\cap D_{ik}}$
for any distinct $i$, $j$, $k$.
\end{enumerate}
Then, there exists an algebraic scheme $\sX$ which has the Chevalley-Kleiman 
property such that any connected component 
of $\sX$ is an $n$-dimensional snc variety and the normalization of $\sX$ can be written as 
\[
\nu\colon X_1\sqcup\dots\sqcup X_m\rightarrow\sX,
\]
and $\sX$ also satisfies the following: 
\begin{itemize}
\item
The conductor divisor in $X_i$ (with respect to $\sX$) 
is exactly $D_i$ for any $1\leq i\leq m$. 
\item
$\nu(x_i)=\nu(x_j)$ if and only if $x_i\in D_{ij}$, $x_j\in D_{ji}$ and $\phi_{ij}(x_i)=x_j$ 
for any $1\leq i<j\leq m$, $x_i\in X_i$ and $x_j\in X_j$.  
\end{itemize}
\end{thm}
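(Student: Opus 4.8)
The plan is to construct $\sX$ by gluing the $X_i$ together along the prescribed identifications, using the standard machinery for pushouts in the category of schemes (amalgamated sums along closed immersions), and then to verify that the resulting object has all the asserted local and global properties. First I would set $\overline{X}:=X_1\sqcup\dots\sqcup X_m$ and define, on the disjoint union $\bigsqcup_{i\neq j}D_{ij}\subset\overline{X}$, the equivalence relation $\sim$ generated by $x_i\sim\phi_{ij}(x_i)$ for $x_i\in D_{ij}$. Conditions \ref{glue1} and \ref{glue2} are precisely what is needed for $\sim$ to be an honest equivalence relation that is compatible with intersections, so that the quotient set $\sX$ (as a topological space, with the quotient topology) is well defined and the map $\nu\colon\overline{X}\to\sX$ is finite, surjective, and generically injective. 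I would then equip $\sX$ with a structure sheaf by declaring $\sO_{\sX}:=\nu_*\sO_{\overline{X}}\times_{(\nu|_{\overline{D}})_*\sO_{\overline{D}}}(\dots)$ — concretely, a regular function on an open $U\subset\sX$ is a tuple $(f_i)$ of regular functions on $\nu^{-1}(U)\cap X_i$ that agree under all the $\phi_{ij}$ on the $D_{ij}$. This is the fibre-product (equalizer) description of the amalgamated sum.

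The key point is that this ringed space is genuinely an algebraic scheme and that $\nu$ is its normalization. Here I would invoke the existence of pushouts of schemes along closed immersions under a projectivity/affineness-type hypothesis: this is exactly where the Chevalley–Kleiman property enters, guaranteeing that any finite subset of $\sX$ — in particular any finite subset meeting the singular locus — lies in an affine open, so that the gluing can be performed affine-locally by the elementary commutative-algebra pushout $A_1\times_{\bar A}\dots\times_{\bar A}A_m$, and these local pieces patch. I expect the paper has set this up via the cited \cite{quot}; I would quote that the amalgamated sum exists, is of finite type, inherits the Chevalley–Kleiman property, and comes with the conductor-square. Normality of $\overline{X}$ plus the fact that $\nu$ is finite and birational on each component then identifies $\nu$ with the normalization of $\sX$, and Remark \ref{nrmlrmk} (read backwards) confirms $\overline{X}$ is the disjoint union of the irreducible components.

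Next I would check the local structure: étale-locally (or formally-locally) around a point $p\in\sX$ with $\nu^{-1}(p)=\{x_{i_1},\dots,x_{i_k}\}$, each $X_{i_s}$ looks like an affine space and the divisors $D_{i_s j}$ passing through $x_{i_s}$ are coordinate hyperplanes; the compatibility conditions \ref{glue1}, \ref{glue2} let me choose the étale coordinates simultaneously so that the completed local ring of $\sX$ at $p$ becomes $\Bbbk[[x_1,\dots,x_{n+1}]]/(x_1\cdots x_k)$ — i.e.\ $\sX$ has normal crossing singularities in the sense of Definition \ref{ncdfn}, and since each $X_i$ is smooth, each connected component of $\sX$ is an snc variety. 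Finally the two bulleted assertions are read off directly from the construction: the conductor divisor in $X_i$ is the locus where $\nu$ fails to be injective, which by construction is $\bigcup_{j\neq i}D_{ij}=D_i$; and $\nu(x_i)=\nu(x_j)$ for $i<j$ holds iff $x_i,x_j$ are $\sim$-equivalent, which (again by \ref{glue1}, \ref{glue2}, these being exactly what prevents the equivalence relation from collapsing extra points) happens iff $x_i\in D_{ij}$, $x_j\in D_{ji}$ and $\phi_{ij}(x_i)=x_j$.

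The main obstacle is the second paragraph: producing an \emph{algebraic scheme} rather than merely a ringed space, i.e.\ showing the amalgamated sum exists in the category of finite-type $\Bbbk$-schemes and that $\nu$ is finite. The delicate part is that pushouts of schemes along closed immersions do not exist in general; one needs the Ferrand-type hypothesis that points to be glued admit a common affine neighbourhood, which is exactly the Chevalley–Kleiman assumption, and one must check this affine-local construction glues to a scheme with the quotient topology and that the Chevalley–Kleiman property is preserved. Everything after that — the normal-crossing local model and the two bullet points — is a bookkeeping exercise in the compatibility conditions \ref{glue1} and \ref{glue2}.
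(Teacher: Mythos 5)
Your proposal is correct and follows essentially the same route as the paper: both reduce to Koll\'ar's quotient-by-a-finite-equivalence-relation machinery (the pushout along the closed immersions $D_{ij}\hookrightarrow X_i$), use conditions (1) and (2) to see that the $\phi_{ij}$ generate a finite equivalence relation with no extra identifications, invoke the Chevalley--Kleiman property exactly as you do to guarantee the quotient exists as a finite-type scheme with that property, and then verify the fibre-product description of the structure sheaf affine-locally together with the normal-crossing local model. The only difference is one of ordering --- you build the ringed space first and then argue it is a scheme, whereas the paper obtains the scheme from the cited existence theorem and then checks the local description --- which is not a substantive divergence.
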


\begin{proof}[Sketch of the proof]
Let $X:=X_1\sqcup\dots\sqcup X_m$ and 
$D:=D_1\sqcup\dots\sqcup D_m\subset X$, a divisor on $X$. 
We note that $\{\phi_{ij}\}_{i\neq j}$ defines an involution on $\bigsqcup_{i\neq j}D_{ij}$, 
which is the normalization of $D$, by the condition \eqref{glue1}. 
It is easy to show that this involution generates a finite equivalence relation 
in the sense of Koll\'ar \cite[(26), \S 3]{kollarbook} by the condition \eqref{glue2}. 
Therefore, by \cite[Theorem 23, \S 3]{kollarbook} and \cite[Corollary 48]{quot} 
(see also \cite{fer}), we have a finite surjective morphism 
$\nu\colon X\rightarrow\sX$ 
onto a seminormal variety which has the Chevalley-Kleiman property 
such that the following property holds. 
For $x_i\in X_i$ and $x_j\in X_j$, $\nu(x_i)=\nu(x_j)$ holds if and only if 
either of the following holds; $i=j$ and $x_i=x_j$, or $i\neq j$, $x_i\in D_{ij}, x_j\in D_{ji}$ and 
$\phi_{ij}(x_i)=x_j$. 

Pick an arbitrary small affine open subscheme $\sU=\Spec A\subset\sX$. 
Since $\nu$ is a finite morphism, 
$U_i:=\nu^{-1}(\sU)\cap X_i$ is affine for any $1\leq i\leq m$.  
Let $U_i=\Spec A_i$. 
Then we have a finite extension of $\Bbbk$-algebra 
$A\hookrightarrow\prod_{i=1}^m A_i$ obtained by $\nu$. 
After replacing $\sU$ by a small one, $D_{ij}\cap U_i\subset U_i$ 
can be defined by a single element $f_{ij}\in A_i$ for any $i$, $j$. 
Let $\psi_{ij}\colon A_i/(f_{ij})\rightarrow A_j/(f_{ji})$ 
be the isomorphism of $\Bbbk$-algebra 
associated to the isomorphism 
$\phi_{ji}|_{D_{ji}\cap U_j}\colon D_{ji}\cap U_j\rightarrow D_{ij}\cap U_i$. 
Then it is easy to show the equality
\[
A=\Biggl\{(a_i)_{i}\in\prod_{i=1}^m A_i\ \Bigg|\ \psi_{ij}(a_i\Mod\, (f_{ij}))=a_j\Mod\, (f_{ji})
\quad\text{for all }i\neq j\Biggr\},
\]
the restriction $\nu|_{X_i}$ is a closed embedding for any $1\leq i\leq m$ 
and $\Spec A$ has normal crossing singularities at all closed points. 
\end{proof}

Next, we consider the descent of invertible sheaves. 

\begin{proposition}\label{picglue}
Let $\sX$ be an $n$-dimensional snc variety 
with the irreducible decomposition $\sX=\bigcup_{i=1}^mX_i$ 
which has a unique minimal stratum. 
We also let $X_{ij}:=X_i\cap X_j$ $($scheme theoretic intersection$)$ for any 
$1\leq i<j\leq m$. 
Then we have an exact sequence 
\[
0\rightarrow\Pic(\sX)\xrightarrow{\eta}\bigoplus_{i=1}^m\Pic(X_i)
\xrightarrow{\mu}\bigoplus_{1\leq i<j\leq m}\Pic(X_{ij}),
\]
where $\eta$ is the restriction homomorphism and 
\[
\mu\Bigl((\sH_i)_i\Bigr):=(\sH_i|_{X_{ij}}\otimes\sH_j^\vee|_{X_{ij}})_{i<j}.
\]
\end{proposition}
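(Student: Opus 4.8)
**The plan is to prove exactness at the two nontrivial spots—injectivity of $\eta$ and exactness at $\bigoplus_i \Pic(X_i)$—by an induction on the number $m$ of irreducible components, using the hypothesis of a unique minimal stratum to make the induction go through.**

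First I would observe that exactness "at the right" in the weak sense that $\mu\circ\eta = 0$ is immediate: if $\sL \in \Pic(\sX)$ then $(\sL|_{X_i})_i$ restricts on each $X_{ij}$ to $\sL|_{X_{ij}}$ from both sides, so $\mu(\eta(\sL)) = 0$. For injectivity of $\eta$, suppose $\sL \in \Pic(\sX)$ has $\sL|_{X_i} \simeq \sO_{X_i}$ for all $i$. Since $\nu\colon \bigsqcup X_i \to \sX$ is the normalization and is a finite surjection, I would use the fact that a trivialization can be chosen compatibly: picking nowhere-vanishing sections $s_i$ of $\sL|_{X_i}$, the ratios $s_i/s_j$ on $X_{ij}$ are units, and because each $X_i$ is a projective variety (hence $H^0(X_i,\sO_{X_i}^\times) = \Bbbk^\times$) the ambiguity in each $s_i$ is just a scalar; one can then rescale the $s_i$ so that they agree on overlaps and glue to a global trivialization of $\sL$. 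Here the hypothesis that $\sX$ has a \emph{unique} minimal stratum $S$ is what makes the ratios consistent: the dual intersection complex of $\sX$ is connected and moreover the unique minimal stratum forces the nerve to be "simply connected enough" that the $1$-cocycle $(s_i/s_j)$ with values in the constant sheaf $\Bbbk^\times$ is a coboundary—more concretely, I would argue by induction, peeling off one component at a time, the unique minimal stratum guaranteeing that the component being removed still meets the rest of $\sX$ in a connected locus.

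For exactness at $\bigoplus_i \Pic(X_i)$, let $(\sH_i)_i$ be in the kernel of $\mu$, so $\sH_i|_{X_{ij}} \simeq \sH_j|_{X_{ij}}$ for all $i<j$. I want to produce $\sL \in \Pic(\sX)$ with $\sL|_{X_i} \simeq \sH_i$. I would proceed by induction on $m$. Write $\sX = \sX' \cup X_m$ where $\sX' = \bigcup_{i<m} X_i$; by the unique-minimal-stratum hypothesis one checks $\sX'$ is again an snc variety with a unique minimal stratum (if the minimal stratum of $\sX$ lies in $X_m$, reindex so it lies in several components and use that removing one non-minimal component is harmless; if it lies inside $\sX'$ this is clear). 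By induction I get $\sL' \in \Pic(\sX')$ restricting to $\sH_i$ on each $X_i$, $i<m$. Now $\sX = \sX' \cup X_m$ is glued along $Z := \sX' \cap X_m = \bigcup_{i<m} X_{im}$, and a line bundle on $\sX$ is the data of a line bundle on $\sX'$, one on $X_m$, and an isomorphism of their restrictions to $Z$. I have $\sL'|_Z$ and $\sH_m|_Z$; on each $X_{im}$ these are isomorphic by hypothesis, and I must glue these isomorphisms over $Z$. The obstruction lives in the cokernel of $H^0(Z,\sO_Z^\times) \to \bigoplus_i H^0(X_{im},\sO_{X_{im}}^\times)$ modulo the image of $H^0(Z,\,\underline{\Bbbk^\times})$-type data, and again connectedness of $Z$—which follows because $\sX$ has a unique minimal stratum—kills it.

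**The main obstacle I expect is exactly this gluing/connectedness bookkeeping: verifying that the unique-minimal-stratum hypothesis propagates correctly under removing a component, and that it implies the relevant intersection loci ($X_{ij}$, and $\sX'\cap X_m$) are connected so that the scalar ambiguities in trivializations can be matched up.** In other words, the cohomological content is trivial ($H^1$ of a constant sheaf on a connected scheme), but the combinatorics of the strata—showing the nerve stays connected and "tree-like enough" through the induction—is where the care is needed; I would isolate this as a small combinatorial lemma about snc varieties with a unique minimal stratum (any stratum contains the minimal one in its closure's relevant sense, hence the nerve is connected and the induction closes).
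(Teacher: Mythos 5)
Your proposal follows essentially the same route as the paper: induct on the number of components, peel off one component at a time, and run a Mayer--Vietoris argument for the sheaf of units, with the unique minimal stratum supplying connectedness of all the relevant overlaps. One point to sharpen: in the middle-exactness step, the identification $\sL'|_Z\simeq\sH_m|_Z$ does \emph{not} follow from connectedness of $Z$ alone (a cycle of components is connected yet carries line bundles trivial on every component); what is needed is injectivity of $\Pic(Z)\rightarrow\bigoplus_{j}\Pic(X_{jm})$, i.e.\ the injectivity half of the proposition applied recursively to the lower-dimensional snc variety $Z=\sX'\cap X_m$, which again has a unique minimal stratum --- this is exactly how the paper closes that step. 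Your closing combinatorial observation (every stratum contains the unique minimal one, so every overlap arising in the induction is connected and again has a unique minimal stratum) is the right lemma to make both this recursion and the propagation of the hypothesis to $\sX'$ and $Z$ precise, so the argument does go through once that substitution for ``connectedness'' is made.
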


\begin{proof}
Let $\sX_i:=\bigcup_{j=1}^iX_j\subset\sX$ for any $1\leq i\leq m$. 
Then it is easy to show that both $\sX_i$ and $\sX_i\cap X_{i+1}$ are 
snc varieties and have a unique minimal stratum. 
Since units of structure sheaves form an exact sequence 
\[
1\rightarrow\sO_{\sX_{i+1}}^*\rightarrow\sO_{X_{i+1}}^*\times\sO_{\sX_i}^*
\rightarrow\sO_{\sX_i\cap X_{i+1}}^*\rightarrow 1
\]
 of sheaves of Abelian groups, 
which induces a long exact sequence 
\begin{eqnarray*}
1 & \rightarrow & \Bbbk^*\rightarrow\Bbbk^*\times\Bbbk^*
\xrightarrow{\upsilon}\Bbbk^* \\
& \rightarrow & \Pic(\sX_{i+1})\xrightarrow{\lambda}\Pic(X_{i+1})\oplus\Pic(\sX_i)
\rightarrow\Pic(\sX_i\cap X_{i+1}).
\end{eqnarray*}
Here $\sX_i$, $\sX_{i+1}$ and $X_{i+1}$ are all connected. 
The map $\lambda$ above is injective since $\upsilon$ is surjective. 
In particular, $\eta$ is injective. 

It is obvious that $\mu\circ\eta=0$. Assume that  
$(\sH_i)_i\in\bigoplus_{i=1}^m\Pic(X_i)$ satisfies $\mu\bigl((\sH_i)_i\bigr)=0$. 
We will construct an invertible sheaf $\sL_i\in\Pic(\sX_i)$ 
for any $1\leq i\leq m$ by induction such that 
\begin{itemize}
\item
$\sL_i|_{X_i}\simeq\sH_i$ and
\item
$\sL_i|_{\sX_{i-1}}\simeq\sL_{i-1}$ (if $i\geq 2$).
\end{itemize}
If $i=1$, then $\sL_1$ must be (isomorphic to) $\sH_1$. 
Assume that we have constructed $\sL_1,\dots,\sL_i$. 
Since
\[
\Pic(\sX_{i+1})\rightarrow\Pic(X_{i+1})\oplus\Pic(\sX_i)
\rightarrow\Pic(\sX_i\cap X_{i+1})
\]
is exact, it is enough to show $\sL_i|_{\sX_i\cap X_{i+1}}\simeq 
\sH_{i+1}|_{\sX_i\cap X_{i+1}}$ to construct $\sL_{i+1}$. 
We already know that the natural sequence 
\[
0\rightarrow\Pic(\sX_i\cap X_{i+1})\xrightarrow{\kappa}
\bigoplus_{j=1}^i\Pic(X_{j,i+1})
\]
is exact since $\sX_i\cap X_{i+1}$ has a unique minimal stratum. 
Both $\sL_i|_{\sX_i\cap X_{i+1}}$ and $\sH_{i+1}|_{\sX_i\cap X_{i+1}}$ 
map $\kappa$ to $(\sH_j|_{X_{j,i+1}})_j$, thus we can construct $\sL_{i+1}$. 

Therefore we obtain an invertible sheaf $\sL_m\in\Pic(\sX)$ 
such that $\sL|_{X_i}\simeq\sH_i$ 
for any $1\leq i\leq m$. Thus $\eta(\sL_m)=(\sH_i)_i$. 
\end{proof}


\subsection{Snc Fano varieties and log Fano manifolds}\label{sncfano_section}


We shall define simple normal crossing Fano varieties 
and log Fano manifolds. 

\begin{definition}[snc Fano varieties and log Fano manifolds]\label{sncfanopredfn}
\begin{enumerate}
\renewcommand{\theenumi}{\arabic{enumi}}
\renewcommand{\labelenumi}{(\theenumi)}
\item
A projective snc variety $\sX$ is said to be a 
\emph{simple normal crossing Fano variety} 
(\emph{snc Fano variety}, for short) 
if the dual of the dualizing sheaf $\omega_{\sX}^\vee$ is ample. 
\item
A projective log manifold $(X,D)$ is  said to be a 
\emph{log Fano manifold} if $-(K_X+D)$ is ample. 
\end{enumerate}
\end{definition}

\begin{ex}[See also Proposition \ref{indn}]\label{curve}
Let $(X, D)$ be a one-dimensional log Fano manifold. 
Then $X\simeq\pr^1$ and $D$ is either one point or empty, 
since $0<\deg (-(K_X+D))=2-2g-\deg D$ holds for the genus $g$ of $X$. 
Therefore, if $\sX$ is a one-dimensional 
snc Fano variety, then $\sX$ is isomorphic to either smooth or reducible conic. 
\end{ex}

We also define the \emph{index} and \emph{pseudoindex} 
for a simple normal crossing Fano variety 
and also for a log Fano manifold; whose notion is essential in the paper.  

\begin{definition}[index]\label{sncfanodfn}
\begin{enumerate}
\renewcommand{\theenumi}{\arabic{enumi}}
\renewcommand{\labelenumi}{(\theenumi)}
\item
Let $\sX$ be an snc Fano variety. We define the \emph{snc Fano index} of $\sX$ as 
\[
\max\{r\in\Z_{>0}\mid\omega_{\sX}^{\vee}\simeq\sL^{\otimes r}
\text{ for some }\sL\in\Pic(\sX)\}.
\]
\item
Let $(X, D)$ be a log Fano manifold. We define the \emph{log Fano index} 
of $(X, D)$ as 
\[
\max\{r\in\Z_{>0}\mid -(K_X+D)\sim rL
\text{ for some Cartier divisor }L\text{ on }X\}.
\]
\end{enumerate}
\end{definition}

\begin{definition}[pseudoindex]\label{psncfanodfn}
\begin{enumerate}
\renewcommand{\theenumi}{\arabic{enumi}}
\renewcommand{\labelenumi}{(\theenumi)}
\item
Let $\sX$ be an snc Fano variety. We define the \emph{snc Fano pseudoindex} 
of $\sX$ as 
\[
\min\{\deg_C(\omega_{\sX}^{\vee}|_C)\mid C\subset\sX
\text{ rational curve}\}.
\]
\item
Let $(X, D)$ be a log Fano manifold. We define the \emph{log Fano pseudoindex} 
of $(X, D)$ as 
\[
\min\{(-(K_X+D)\cdot C)\mid C\subset X
\text{ rational curve}\}.
\]
\end{enumerate}
\end{definition}

\begin{remark}\label{pseudo_geq}
For an snc Fano variety $\sX$ (resp.\ a log Fano manifold $(X, D)$), 
the snc Fano pseudoindex (resp.\ the log Fano pseudoindex) $\iota$ is divisible by the 
snc Fano index (resp.\ the log Fano index) $r$ by definition. 
In particular, $\iota\geq r$ holds. 
\end{remark}

\begin{remark}\label{fanoirrrmk}
Let $\sX$ be an $n$-dimensional snc Fano variety with the snc Fano index $r$, 
the snc Fano pseudoindex $\iota$
and $\sL$ be an invertible sheaf on $\sX$ such that 
$\omega_{\sX}^{\vee}\simeq\sL^{\otimes r}$ holds. 
Then $(X,D)$ is an $n$-dimensional log Fano manifold such that 
$-(K_X+D)\sim rL$ holds, 
where $(X,D)\subset\sX$ is an irreducible component with the conductor 
and $L$ is a divisor corresponding to the restriction of $\sL$ to $X$. 
It is easily shown by Remark \ref{adjunctionrmk}. Hence 
the log Fano index of $(X, D)$ is divisible by $r$ and the log Fano pseudoindex 
of $(X, D)$ is at least $\iota$.
\end{remark}


\subsection{First properties of log Fano manifolds}\label{firstprop_section}


We quickly give some properties about log Fano manifolds. 

\begin{thm}[{\cite[Theorem 1.3, 1.4]{Maeda}, \cite[Theorem 3.35]{KoMo}}]\label{cone}
Let $(X,D)$ be a log Fano manifold. 
Then $\NE(X)$ is spanned by a finite number of extremal rays. 
Furthermore, for any extremal ray $R\subset\NE(X)$, we have: 
\begin{itemize}
\item
The ray $R$ is spanned by a class of rational curve $C$ on $X$. 
\item
There exists a contraction morphism $\cont_R:X\rightarrow Y$ associated to $R$ and 
there exists an exact sequence 
\[
0\rightarrow\Pic(Y)\xrightarrow{\cont_R^*}\Pic(X)\xrightarrow{(\bullet\cdot C)}\Z.
\]
\end{itemize}
\end{thm}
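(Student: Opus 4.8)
The plan is to reduce everything to the Cone Theorem, the Contraction Theorem and the Base Point Free Theorem for klt pairs as developed in \cite{KoMo}; the only extra ingredient is a perturbation of the boundary. First I would observe that, since $X$ is smooth and $D$ is reduced and simple normal crossing, the pair $(X,(1-\varepsilon)D)$ is klt for every $\varepsilon\in(0,1]$ (when $D=0$ one simply keeps $(X,0)$), and that the class $-(K_X+(1-\varepsilon)D)=-(K_X+D)+\varepsilon D$ varies affine-linearly in $\varepsilon$ and equals the ample class $-(K_X+D)$ at $\varepsilon=0$; as the ample cone of $X$ is open, $-(K_X+(1-\varepsilon)D)$ remains ample for all sufficiently small $\varepsilon>0$. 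Fixing such an $\varepsilon$ and setting $\Delta:=(1-\varepsilon)D$, one obtains a klt pair $(X,\Delta)$ with $-(K_X+\Delta)$ ample, and with the same $\overline{\NE}(X)$ and $\Pic(X)$ as $X$ itself.

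Next I would apply the klt Cone Theorem to $(X,\Delta)$: one gets $\overline{\NE}(X)=\overline{\NE}(X)_{K_X+\Delta\ge 0}+\sum_j R_j$, with each $R_j$ a $(K_X+\Delta)$-negative extremal ray spanned by a rational curve $C_j$ satisfying $0<-(K_X+\Delta)\cdot C_j\le 2\dim X$, and the family $\{R_j\}$ discrete away from the hyperplane $(K_X+\Delta)^\perp$. Ampleness of $-(K_X+\Delta)$ makes the term $\overline{\NE}(X)_{K_X+\Delta\ge 0}$ trivial, so every extremal ray of $\overline{\NE}(X)$ is one of the $R_j$; comparing with a fixed ample class $A$ and using $-(K_X+\Delta)-\delta A$ ample for some $\delta>0$ bounds $A\cdot C_j$ uniformly, and together with discreteness this forces $\{R_j\}$ to be finite. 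Hence $\overline{\NE}(X)$ is rational polyhedral, so $\NE(X)=\overline{\NE}(X)$ is closed and spanned by finitely many extremal rays, each spanned by a rational curve. For a fixed ray $R=R_j$ and a spanning rational curve $C=C_j$ I would then choose a nef Cartier divisor $H$ supporting $R$ (so $H^\perp\cap\overline{\NE}(X)=R$), note that $H-(K_X+\Delta)$ is nef-plus-ample hence ample, and invoke the Base Point Free Theorem: $|mH|$ is free for $m\gg 0$ and, after Stein factorization, yields the contraction $\cont_R\colon X\to Y$ onto a normal projective variety with $\cont_{R*}\sO_X=\sO_Y$ contracting exactly the curves of class in $R$.

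For the exact sequence, injectivity of $\cont_R^*$ is immediate from $\cont_{R*}\sO_X=\sO_Y$ and the projection formula, and the composite of $\cont_R^*$ with intersection against $C$ vanishes since $\cont_R$ contracts $C$. The substantive point is exactness at $\Pic(X)$: given $\sL$ with $\sL\cdot C=0$, the class $\sL$ is numerically trivial on every $\cont_R$-contracted curve, and rerunning the previous step with $\sL+mH$ in place of $H$ (still nef, still supporting $R$, with $(\sL+mH)-(K_X+\Delta)$ still ample) shows that a positive multiple of $\sL$ is pulled back from $Y$, after which a descent/saturation argument gives $\sL\in\cont_R^*\Pic(Y)$. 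I expect this last step — genuinely needing the Base Point Free Theorem and a rigidity lemma rather than formal manipulation — to be the main obstacle, but it is exactly the content of the extremal-contraction results of \cite{KoMo}, so that the only new input for log Fano manifolds is the perturbation of the first paragraph; this also recovers Maeda's statements in \cite{Maeda}.
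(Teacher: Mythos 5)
Your proposal is correct and is essentially the paper's approach: the paper gives no independent proof, simply citing Maeda and Koll\'ar--Mori, and your argument (perturbing $D$ to $(1-\varepsilon)D$ to obtain a klt pair with $-(K_X+(1-\varepsilon)D)$ ample, then invoking the klt Cone, Contraction and Base Point Free Theorems, with the coprimality/descent trick for the exactness of the Picard sequence) is precisely the standard content behind those citations. The only genuinely non-formal ingredient you add is the boundary perturbation, and you handle it correctly.
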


\begin{lemma}[{\cite[Corollary 2.2, Lemma 2.3]{Maeda}}]\label{picard}
Let $(X,D)$ be a log Fano manifold. 
Then $\Pic(X)$ is torsion free. Furthermore if $\Bbbk=\C$, 
the homomorphism 
\[
\Pic(X)\rightarrow H^2(X^\text{\rm{an}}; \Z)
\]
is isomorphism. 
\end{lemma}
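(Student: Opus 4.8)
The statement is quoted from Maeda, so a full new proof is not strictly required, but here is how I would reconstruct it. The plan is to reduce both assertions to the fact that a log Fano manifold $(X,D)$ has vanishing of the relevant cohomology of $\sO_X$. First I would establish the Kodaira-type vanishing $H^i(X,\sO_X)=0$ for all $i>0$. This follows from Kawamata--Viehweg vanishing applied to $-(K_X+D)$, which is ample by hypothesis: writing $\sO_X=\sO_X(K_X-(K_X+D)+D)$ does not immediately work because of the $D$, so instead I would argue via the log resolution being already $X$ itself and use the injectivity/vanishing theorems for the pair $(X,D)$, or more simply observe that $-K_X=-(K_X+D)+D$ is big and nef (being ample plus effective), so Kawamata--Viehweg gives $H^i(X,\sO_X)=H^i(X,K_X+(-K_X))=0$ for $i>0$ once one checks $-K_X$ is big and nef — here nefness needs $D$ effective and $-(K_X+D)$ ample, which indeed forces $-K_X$ ample, so this is clean. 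In particular $h^1(X,\sO_X)=h^2(X,\sO_X)=0$.

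For torsion-freeness of $\Pic(X)$: suppose $L$ is a torsion line bundle, say $L^{\otimes k}\simeq\sO_X$. In characteristic zero a torsion line bundle is numerically trivial, hence algebraically equivalent to $\sO_X$; but the component $\Pic^0(X)$ of the identity is an abelian variety of dimension $h^1(X,\sO_X)=0$, so $\Pic^0(X)=0$ and $L$ lies in the Néron--Severi group. Then the torsion subgroup of $\mathrm{NS}(X)$ would have to vanish, which I would get from $H^2(X,\sO_X)=0$ together with the exponential sequence (over $\C$) or, in general characteristic zero, by Lefschetz-principle reduction to $\C$: the torsion in $\mathrm{NS}(X)$ injects into $H^2(X^{\mathrm{an}};\Z)_{\mathrm{tors}}$, and if that torsion were nonzero it would produce a nontrivial étale cover, contradicting simple connectedness of $X$ (which itself follows from $-K_X$ ample via the theorem of, e.g., vanishing of $\pi_1$ for Fano manifolds; here one should note $X$ itself is Fano, independently of $D$). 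So the cleanest route is: $X$ Fano $\Rightarrow$ $\pi_1(X)=1$ $\Rightarrow$ $H^1(X^{\mathrm{an}};\Z)=0$ and $H^2(X^{\mathrm{an}};\Z)$ torsion-free, and $\Pic(X)\hookrightarrow H^2$ is then injective with torsion-free target.

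For the isomorphism $\Pic(X)\xrightarrow{\sim}H^2(X^{\mathrm{an}};\Z)$ when $\Bbbk=\C$: use the exponential exact sequence
\[
\cdots\to H^1(X,\sO_X)\to\Pic(X)\xrightarrow{c_1}H^2(X^{\mathrm{an}};\Z)\to H^2(X,\sO_X)\to\cdots.
\]
The term $H^1(X,\sO_X)=0$ gives injectivity of $c_1$ (recovering torsion-freeness), and $H^2(X,\sO_X)=0$ gives surjectivity. This is the whole argument. The main obstacle — really the only point needing care — is the very first step: justifying $H^1(X,\sO_X)=H^2(X,\sO_X)=0$. The subtlety is that one wants to use that $X$ (the ambient manifold) is Fano, i.e.\ that $-K_X$ is ample; this is immediate since $-K_X=-(K_X+D)+D$ with $-(K_X+D)$ ample and $D$ an effective (even reduced) divisor, so $-K_X$ is ample. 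Then Kodaira vanishing on the smooth projective $X$ finishes it, and everything downstream is formal homological algebra with the exponential sequence.
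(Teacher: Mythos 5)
Your overall architecture (vanishing of $H^1(X,\sO_X)$ and $H^2(X,\sO_X)$, then the exponential sequence plus simple connectedness) is the standard and correct route, but the step you yourself single out as ``the only point needing care'' is exactly where the argument breaks: it is false that $-K_X=-(K_X+D)+D$ is ample, or even nef, for a log Fano manifold. Ample plus effective is big but in general neither ample nor nef, and the paper itself supplies counterexamples: for $X=\pr[\pr^r;0^r,m]$ in Example \ref{zerozerobig} one has $\sO_X(-K_X)\simeq\sO(r+1-m;\,r+1)$ by Lemma \ref{scroll_lem}, which fails to be nef as soon as $m>r+1$, yet $(X,D)$ is a log Fano manifold for every $m\geq 2$; the same happens in Examples \ref{kayaku} and \ref{Tp} for large twists. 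This is precisely why Proposition \ref{rhoonefano} carries the hypothesis $\rho(X)=1$ (which forces $D$ to be ample), and why the introduction emphasizes that $(-(K_X+D)^{\cdot n})$ is unbounded for log Fano manifolds, unlike for Fano manifolds. Consequently neither Kodaira vanishing applied to $-K_X$ nor the assertion ``$X$ is Fano, hence $\pi_1(X)=1$'' is available, and as written both halves of your proof rest on this false premise.

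The correct justification of the vanishing is the route you dismissed: $\sO_X\simeq\sO_X(K_X+D+A)$ with $A=-(K_X+D)$ ample and $D$ a reduced simple normal crossing divisor, and the logarithmic version of Kodaira vanishing (Norimatsu's theorem, or the Kawamata--Viehweg/Ambro--Fujino vanishing for the log canonical pair $(X,D)$) gives $H^i(X,K_X+D+A)=0$ for all $i>0$ with no positivity of $-K_X$ required. Alternatively, Theorem \ref{rat_conn} ($X$ is rationally connected) yields both $H^i(X,\sO_X)=0$ for $i>0$ and $\pi_1(X^{\mathrm{an}})=1$, which also repairs the simple-connectedness step used for torsion-freeness. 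With either substitution the remainder of your argument (exponential sequence, $\Pic^0(X)=0$, Lefschetz principle) goes through.
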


\begin{remark}\label{logfanopic_rmk}
Let $(X, D)$ be a log Fano manifold, $r$ be a positive integer and 
$L$ be a divisor on $X$ such that $-(K_X+D)\sim rL$. 
Then $L$ is uniquely defined up to linear equivalence by $X$, $D$ and $r$ 
by Lemma \ref{picard}. 
\end{remark}

\begin{remark}\label{sncpicrmk}
Let $\sX$ be an snc Fano variety. 
Then $\Pic(\sX)$ is torsion free by Lemma \ref{picard}, Theorem \ref{fujinothm} 
\eqref{fujinothm2} and Proposition \ref{picglue}. 
Hence we can also say the following: 
For an snc Fano variety $\sX$, $r$ a positive integer and $\sL$ an invertible sheaf 
such that $\omega_{\sX}^{\vee}\simeq\sL^{\otimes r}$, 
$\sL$ is uniquely defined up to isomorphism by $\sX$ and $r$. 
\end{remark}

\begin{proposition}\label{rhoonefano}
Let $(X, D)$ be a log Fano manifold with $\rho(X)=1$ and $D\neq 0$. 
Then $X$ is a Fano manifold whose Fano index $($resp.\ Fano pseudoindex$)$ 
is larger than the log Fano index $($resp.\ the log Fano pseudoindex$)$ of $(X, D)$. 
\end{proposition}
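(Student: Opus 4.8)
The plan is to show that $D\neq 0$ forces $X$ to be Fano, and then compare indices by restricting to a curve in $D$. First I would observe that since $\rho(X)=1$ and $-(K_X+D)$ is ample with $D$ effective and nonzero, we have $-K_X = -(K_X+D) + D$ is a sum of an ample divisor and a nonzero effective divisor; as $\rho(X)=1$ this makes $-K_X$ ample, so $X$ is a Fano manifold. (More precisely, $\NE(X)$ is a single ray by Theorem \ref{cone}, and $D$ is nonzero effective hence has positive intersection with the generator of the ray, so $(-K_X\cdot C) = (-(K_X+D)\cdot C) + (D\cdot C) > (-(K_X+D)\cdot C) > 0$ for a rational curve $C$ spanning the ray.)

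Next I would set up the index comparison. Let $r$ be the log Fano index of $(X,D)$ and $L$ a divisor with $-(K_X+D)\sim rL$. Since $\Pic(X)$ is torsion free of rank one by Lemma \ref{picard}, write $\Pic(X)=\Z\cdot H$ with $H$ the ample generator; then $L\sim aH$ and $D\sim bH$ with $a>0$ and $b>0$ (as both $L$ is ample-on-the-ray and $D$ is nonzero effective on a Fano, hence ample). Thus $-K_X = -(K_X+D)+D \sim (ra+b)H$, so the Fano index $r'$ of $X$ satisfies $r'\geq ra+b \geq r+1 > r$, because $r\mid(-K_X+D)$ means $r\mid ra$, so actually one gets $r' \geq ra + b$ and since $b\geq 1$ this strictly exceeds $ra\geq r$. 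This handles the index statement.

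For the pseudoindex, let $\iota$ be the log Fano pseudoindex of $(X,D)$ and let $C\subset X$ be any rational curve; I want $(-K_X\cdot C) > \iota$. We have $(-K_X\cdot C) = (-(K_X+D)\cdot C) + (D\cdot C) \geq \iota + (D\cdot C)$, so it suffices to show $(D\cdot C)\geq 1$ for every rational curve $C$, i.e.\ that $D\cdot C > 0$; but $D$ is nonzero effective and, as just noted, ample on the Fano manifold $X$ with Picard number one, so $(D\cdot C)>0$ for every curve $C$. Hence $(-K_X\cdot C)\geq \iota+1>\iota$, and taking the minimum over rational curves $C$ gives that the Fano pseudoindex of $X$ is at least $\iota+1$, in particular strictly larger than $\iota$.

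The only mild subtlety — and the step I would be most careful about — is making sure the divisibility is handled correctly: one must check that the Fano index $r'$ of $X$ really does strictly exceed $r$ rather than merely being $\geq r$. The point is that $-K_X \sim -(K_X+D) + D$ where $-(K_X+D)$ is already an $r$-fold multiple in $\Pic(X)=\Z H$, so $-(K_X+D)\sim rL$ with $L\sim aH$, $a\geq 1$, whence $-(K_X+D)\sim (ra)H$, and adding the nonzero effective (hence $\geq H$) divisor $D$ strictly increases the coefficient; so $r' = ra+b > ra \geq r$. This uses $\rho(X)=1$ crucially (to identify $\Pic(X)$ with $\Z$ and to deduce ampleness of $D$) and uses $D\neq 0$ crucially (to get $b\geq 1$); both hypotheses are therefore essential, as expected.
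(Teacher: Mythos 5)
Your proof is correct and follows the same route as the paper, which simply notes that $-K_X\sim -(K_X+D)+D$ with $D$ ample (because $\rho(X)=1$) and declares the assertions obvious. Your write-up just fills in the details of that one-line argument — identifying $\Pic(X)$ with $\Z H$ via Lemma \ref{picard} and adding the coefficients of the ample part and the nonzero effective part — so there is nothing further to add.
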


\begin{proof}
Since $-K_X\sim -(K_X+D)+D$ and $D$ is ample (we note that $\rho(X)=1$), 
the assertions are obvious. 
\end{proof}

\begin{thm}[cf. {\cite[Lemma 2.4]{Maeda}}]\label{fujinothm}
\begin{enumerate}
\renewcommand{\theenumi}{\arabic{enumi}}
\renewcommand{\labelenumi}{$(\theenumi)$}
\item\label{fujinothm1}
Let $(X,D)$ be a log Fano manifold such that 
the log Fano index is divisible by $r$ $($resp.\ the log Fano pseudo index $\geq\iota$$)$. 
Then $D$ is a $($connected$)$ snc Fano variety and the snc Fano index 
is also divisible by $r$ $($resp.\ the snc Fano pseudoindex $\geq\iota$$)$. 
\item\label{fujinothm2}
Let $\sX$ be an snc Fano variety. 
Then there is a unique minimal stratum of $\sX$. 
In particular, any two irreducible components of $\sX$ intersect with each other. 
\end{enumerate}
\end{thm}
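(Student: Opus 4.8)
The plan is to dispatch the elementary halves of part (1) first, use them to prove part (2) by induction on the dimension, and only then return to the divisibility statement in part (1).

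For the elementary part of (1) I would assume $D\neq 0$ (the assertion being vacuous otherwise). Each connected component of the reduced simple normal crossing divisor $D$ is an snc variety by definition, and $D$ itself is connected: Kodaira vanishing applied to $\sO_X(-D)\cong\omega_X\otimes\sO_X(-(K_X+D))$ gives $H^1(X,\sO_X(-D))=0$, so the sequence $0\to\sO_X(-D)\to\sO_X\to\sO_D\to 0$ forces $h^0(D,\sO_D)=h^0(X,\sO_X)=1$. By adjunction $\omega_D\cong\sO_X(K_X+D)|_D$, hence $\omega_D^\vee$ is the restriction of the ample sheaf $\sO_X(-(K_X+D))$ and is therefore ample; thus $D$ is a connected snc Fano variety. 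The pseudoindex statement is then immediate, since a rational curve $C\subset D$ is a rational curve on $X$ with $\deg_C(\omega_D^\vee|_C)=(-(K_X+D)\cdot C)$. If moreover $-(K_X+D)\sim rL$, then $\omega_D^\vee\cong(\sO_X(L)|_D)^{\otimes r}$, and the divisibility of the snc Fano index of $D$ by $r$ will be completed in the last paragraph, once torsion-freeness of $\Pic(D)$ is at hand.

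Next I would prove (2) by induction on $n=\dim\sX$, in the stronger form that every nonempty intersection $\bigcap_{i\in I}X_i$ is irreducible and $\bigcap_{i=1}^mX_i\neq\emptyset$; this gives (2) with $\bigcap_{i=1}^mX_i$ as the unique minimal stratum, and ``any two components meet'' as a byproduct. The base case $n=1$ is Example \ref{curve}. For $n\geq 2$ and $m\geq 2$, each conductor $D_i$ is nonzero (otherwise $X_i$ would be a connected component of $\sX$), and by the previous paragraph applied to the log Fano manifold $(X_i,D_i)$ (Remark \ref{fanoirrrmk}) it is a connected $(n-1)$-dimensional snc Fano variety, so the inductive hypothesis applies to it. I would then extract two facts. (a) Since $\bigcap(\text{all irreducible components of }D_i)\neq\emptyset$ while two distinct irreducible components of the smooth divisor $X_i\cap X_j\subset X_i$ are disjoint, each nonempty $X_i\cap X_j$ must be irreducible, i.e. a single component of $D_i$. (b) If $X_a\cap X_b\neq\emptyset$ and $X_b\cap X_c\neq\emptyset$ with $a,b,c$ distinct, then the unique minimal stratum of $D_b$ lies in every irreducible component of $D_b$, in particular in $X_a\cap X_b$ and in $X_b\cap X_c$, so it is a nonempty subset of $X_a\cap X_c$.

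From (b) the reflexive, symmetric relation ``$i=j$ or $X_i\cap X_j\neq\emptyset$'' on $\{1,\dots,m\}$ is transitive; if it had more than one equivalence class, the corresponding unions of components would be disjoint closed subsets and disconnect $\sX$, which is impossible. Hence any two components of $\sX$ meet, so by (a) the irreducible components of $D_i$ are exactly the $X_i\cap X_j$ with $j\neq i$, and the inductive hypothesis for $D_i$ says precisely that $\bigcap_{j\neq i}(X_i\cap X_j)=\bigcap_{l=1}^mX_l$ is nonempty and that $\bigcap_{l\in I}X_l$ is irreducible whenever $i\in I$. Letting $i$ run over $\{1,\dots,m\}$ yields the strengthened claim for $\sX$ and closes the induction, proving (2). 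Finally, for the divisibility in (1): by (2) the snc Fano variety $D$ has a unique minimal stratum, so Proposition \ref{picglue} embeds $\Pic(D)$ into the direct sum of the Picard groups of its irreducible components; each such component, with its conductor, is a log Fano manifold (Remark \ref{fanoirrrmk}) and hence has torsion-free Picard group (Lemma \ref{picard}), so $\Pic(D)$ is torsion free. Writing $r'$ for the snc Fano index of $D$ and using $\omega_D^\vee\cong\sM^{\otimes r'}\cong(\sO_X(L)|_D)^{\otimes r}$ inside the free abelian group $\Pic(D)$, a short $\gcd$ computation shows $\omega_D^\vee$ is also an $(rr'/\gcd(r,r'))$-th power, which forces $\gcd(r,r')=r$, i.e. $r\mid r'$, by maximality of $r'$. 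I expect the real difficulties to be the bookkeeping of the logical order — the index half of (1) genuinely relies on (2) — and the transitivity observation (b), which is the one step not reducible to a routine application of adjunction, Kodaira vanishing, or Proposition \ref{picglue}.
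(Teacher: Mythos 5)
Your proof is correct and follows essentially the same route as the paper, which simply outsources the two substantive steps --- connectedness of $D$ via Kodaira vanishing applied to $\sO_X(-D)\simeq\omega_X\otimes\sO_X(-(K_X+D))$, and the unique minimal stratum via induction on dimension --- to Maeda's Lemma 2.4; your induction (with the strengthened statement that all nonempty intersections of components are irreducible and the total intersection is nonempty) is the natural reconstruction of that argument, and your transitivity step (b) is exactly the point that makes it work. One small simplification: the final $\gcd$ computation (from $v=rm=r'n$ and $ar+br'=\gcd(r,r')$ one gets $v=\bigl(rr'/\gcd(r,r')\bigr)(an+bm)$) is valid in an arbitrary abelian group, so the divisibility claim in (1) does not actually require torsion-freeness of $\Pic(D)$ and hence does not depend on (2) or on Proposition \ref{picglue} at all.
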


\begin{proof}
\eqref{fujinothm1}
We know that $D$ is connected by \cite[Lemma 2.4 (a)]{Maeda}. 
Let $L$ be a divisor on $X$ such that $-(K_X+D)\sim rL$. 
Then $\omega_D^\vee\simeq\sO_X(-(K_X+D))|_D\simeq(\sO_X(L)|_D)^{\otimes r}$ 
by adjunction. 

\eqref{fujinothm2}
We can prove by using the same idea in \cite[Lemma 2.4 $(\rm{a}')$]{Maeda}. 
We remark that this is directly shown by \cite[Theorem 6.6 (ii)]{Ambro} 
and \cite[Theorem 3.47 (ii)]{fujino}. 
\end{proof}

\begin{corollary}\label{cpnt}
For an $n$-dimensional snc Fano variety $\sX$ with the snc Fano pseudoindex $\iota$, 
the number of irreducible components of $\sX$ is at most $n+2-\iota$. 
\end{corollary}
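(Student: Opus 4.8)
The plan is to induct on the number $m$ of irreducible components of $\sX$, using the adjunction structure of snc Fano varieties. Let $\sX = \bigcup_{i=1}^m X_i$ be the irreducible decomposition, and let $\iota$ be the snc Fano pseudoindex. By Theorem \ref{fujinothm} \eqref{fujinothm2}, any two components meet, and in particular $\sX$ has a unique minimal stratum. The base case $m=1$ is trivial (we need $1 \le n+2-\iota$, i.e.\ $\iota \le n+1$, which holds since $\iota \le \dim X_1 + 1 = n+1$ by intersecting $\omega_{\sX}^\vee$ with a line in a fibre of some extremal contraction — or more simply since $-(K_{X_1})$ restricted to a rational curve is at most $n+1$ when $\rho$ is involved; in any case $\iota \le n+1$ follows from the classification-free bound on lengths). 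The key inductive move: pick a component $(X_i, D_i) \subset \sX$. By Remark \ref{fanoirrrmk}, $(X_i, D_i)$ is an $n$-dimensional log Fano manifold whose log Fano pseudoindex is at least $\iota$; since $D_i \ne 0$ (as $X_i$ meets every other component and $m \ge 2$), Theorem \ref{fujinothm} \eqref{fujinothm1} shows $D_i$ is an $(n-1)$-dimensional snc Fano variety with snc Fano pseudoindex at least $\iota$.

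Now I would count the irreducible components of $D_i$. The divisor $D_i = \sum_{j \ne i} D_{ij}$, where $D_{ij} = X_i \cap X_j$, and each $D_{ij}$ is nonempty and irreducible-or-not; but crucially, for $m-1$ of the components $X_j$ ($j \ne i$) we get a nonempty $D_{ij}$, so $D_i$ has at least $m-1$ connected pieces unless some $D_{ij}$ are themselves reducible or several $D_{ij}$ coincide — which cannot happen since distinct $X_j$ give distinct divisors on $X_i$. The cleanest route: $D_i$ is a connected snc Fano variety (Theorem \ref{fujinothm} \eqref{fujinothm1}) whose irreducible components are exactly the irreducible components of the $D_{ij}$ for $j \ne i$, hence $D_i$ has at least $m-1$ irreducible components. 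Applying the induction hypothesis to the $(n-1)$-dimensional snc Fano variety $D_i$ with pseudoindex $\ge \iota$ gives: number of components of $D_i \le (n-1)+2-\iota = n+1-\iota$. Therefore $m - 1 \le n+1-\iota$, i.e.\ $m \le n+2-\iota$, which is what we want.

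The main obstacle I anticipate is making precise that $D_i$ has \emph{at least} $m-1$ irreducible components, which requires knowing that the $D_{ij}$ for distinct $j$ are pairwise distinct nonempty smooth divisors in $X_i$ — nonemptiness is Theorem \ref{fujinothm} \eqref{fujinothm2}, and distinctness holds because $D_{ij} = X_i \cap X_j$ determines $X_j$ as the unique component of $\sX$ through a general point of $D_{ij}$ other than $X_i$; this is where the simple-normal-crossing hypothesis (each component smooth, local model $x_1\cdots x_k = 0$) is used. A secondary point is the base-case bound $\iota \le n+1$: I would get this by taking any $K_{X_1}$-negative extremal ray on the single component $X_1$ (which exists since $X_1$ is Fano, $D_1 = 0$ when $m=1$), whose length is at most $\dim X_1 + 1 = n+1$ by the standard length bound, and noting $\omega_\sX^\vee|_{X_1} = -K_{X_1}$ in that case so $\iota$ is realized by a rational curve on $X_1$ with $(-K_{X_1} \cdot C) \le n+1$. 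With these two points settled, the induction closes cleanly.
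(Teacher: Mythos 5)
Your argument is essentially the paper's own proof: pass to a component $(X_i,D_i)$, note that $D_i$ is an $(n-1)$-dimensional snc Fano variety with snc Fano pseudoindex at least $\iota$ (Remark \ref{fanoirrrmk} and Theorem \ref{fujinothm} \eqref{fujinothm1}) having at least $m-1$ irreducible components because every $X_j$ meets $X_i$ (Theorem \ref{fujinothm} \eqref{fujinothm2}), and combine $m-1\le\#\mathrm{comp}(D_i)\le (n-1)+2-\iota$; the irreducible case is Mori's bound $\iota\le n+1$, exactly as in the paper.

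One correction: the induction must run on the dimension $n$, not on the number of components $m$ as you announce. Your own invocation of the inductive hypothesis is the statement in dimension $n-1$, and induction on $m$ alone would not close: the $D_{ij}$ are smooth divisors that are not a priori irreducible, so $D_i$ could in principle have \emph{more} than $m-1$ irreducible components, and you could not then assume the result for $D_i$. Inducting on $n$ (with the irreducible case at each dimension disposed of by Mori, as you do for your ``base case'') makes the chain of inequalities close exactly as in the paper; your argument for the pairwise distinctness of the $D_{ij}$ via the local model $x_1\cdots x_k=0$ is fine and is all that is needed for the lower bound $\#\mathrm{comp}(D_i)\ge m-1$.
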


\begin{proof}
We can assume $\sX$ is reducible since the assertion is well-known 
for the irreducible case by \cite{mor79}. 
We prove by induction on $n$. 
If $n=1$, then $\sX$ is isomorphic to a reducible conic (see Example \ref{curve}). 
Hence the assertion is obvious. 

Now, we assume that the assertion holds for the case $n-1$. 
We know that any two irreducible components of $\sX$ 
intersect with each other by Theorem \ref{fujinothm} \eqref{fujinothm2}. 
Therefore, for any irreducible component with the conductor $(X,D)\subset\sX$, 
the number of irreducible components of $\sX$ minus one is 
equal to the number of irreducible components of $D$. 
We note that $D$ is an $(n-1)$-dimensional snc Fano variety 
such that the snc Fano index is at least $\iota$ 
by Remark \ref{fanoirrrmk} and Theorem \ref{fujinothm} \eqref{fujinothm1}. 
The number of irreducible components of $D$ is at most $n+1-\iota$ by induction step.  
Therefore the number of irreducible components of $\sX$ is at most $n+2-\iota$. 
\end{proof}

By Theorem \ref{fujinothm} \eqref{fujinothm2}, we also obtain the following 
corollary using Theorem \ref{glue} and Proposition \ref{picglue}.

\begin{corollary}\label{logfanopic}
Fix $n$, $r$, $m\in\Z_{>0}$. 
Let $(X_i, D_i)$ be an $n$-dimensional log Fano manifold whose log Fano index 
is divisible by $r$ for any $1\leq i\leq m$. 
Assume that the irreducible decomposition is written as 
$D_i=\sum_{j\neq i, 1\leq j\leq m}D_{ij}$ for any $1\leq i\leq m$, 
and there exist isomorphisms
\[
\phi_{ij}\colon D_{ij}\rightarrow D_{ji}
\]
for all distinct $1\leq i$, $j\leq m$ which satisfy the cocycle conditions 
\eqref{glue1} and \eqref{glue2} in Theorem \ref{glue}. 
Then there exists an $n$-dimensional snc Fano variety $\sX$ 
with the snc Fano index is divisible by $r$ whose 
irreducible decomposition can be written as 
$\sX=\bigcup_{i=1}^m(X_i, D_i)$.
\end{corollary}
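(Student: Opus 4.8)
The plan is to apply the gluing machinery of Theorem \ref{glue} to produce the algebraic scheme $\sX$, then use Proposition \ref{picglue} together with the index data to equip $\sX$ with the required ample invertible sheaf. First I would check that the hypotheses of Theorem \ref{glue} are met: each $X_i$ is a smooth projective variety, hence has the Chevalley--Kleiman property; the decompositions $D_i=\sum_{j\neq i}D_{ij}$, the isomorphisms $\phi_{ij}$, and the cocycle conditions \eqref{glue1} and \eqref{glue2} are assumed. Theorem \ref{glue} then yields an algebraic scheme $\sX$ with the Chevalley--Kleiman property whose connected components are $n$-dimensional snc varieties, together with the normalization $\nu\colon X_1\sqcup\dots\sqcup X_m\rightarrow\sX$ realizing the $X_i$ as the irreducible components and the $D_i$ as the conductor divisors. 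A small additional point is to check that $\sX$ is actually connected (so that it is genuinely a variety, and in particular projective): since $D_i\neq 0$ — which follows because $(X_i,D_i)$ has log Fano index divisible by $r\geq 1$, so $-(K_{X_i}+D_i)$ is ample and $D_i$ cannot be empty unless $X_i$ meets no other component, but the gluing data $\phi_{ij}$ force nonempty $D_{ij}$ for the relevant pairs — the components are chained together; more carefully, one observes that each $X_i$ meets at least one other $X_j$, and by re-indexing one sees the union is connected.

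Next I would produce the invertible sheaf. By Remark \ref{logfanopic_rmk}, on each $X_i$ there is a divisor $L_i$, unique up to linear equivalence, with $-(K_{X_i}+D_i)\sim rL_i$; write $\sH_i:=\sO_{X_i}(L_i)\in\Pic(X_i)$. To apply Proposition \ref{picglue} I must verify that $\sX$ has a unique minimal stratum: this is exactly Theorem \ref{fujinothm}\eqref{fujinothm2} applied after we know $\sX$ is an snc Fano variety — but to avoid circularity I would instead note that $\sX$ has a unique minimal stratum as soon as $\omega_\sX^\vee$ descends and is ample, so the cleanest order is: first glue $\sH_i$ using that $\mu\bigl((\sH_i)_i\bigr)=0$. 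The vanishing of $\mu$ is the content I need to check: on $X_{ij}=X_i\cap X_j$, adjunction (Remark \ref{adjunctionrmk}) gives $\omega_\sX|_{X_i}\simeq\sO_{X_i}(K_{X_i}+D_i)$ once $\omega_\sX$ exists, but a priori I only have the $\sH_i$'s; so the real task is to show $\sH_i|_{X_{ij}}\simeq\sH_j|_{X_{ij}}$ directly. This follows from the adjunction $\sO_{X_i}(K_{X_i}+D_i)|_{X_{ij}}\simeq\sO_{X_{ij}}(K_{X_{ij}})$ (double adjunction on the smooth divisor $X_{ij}\subset X_i$, using that $D_i-X_{ij}$ restricts to the conductor of $X_{ij}$ inside the glued object), hence $\sH_i^{\otimes r}|_{X_{ij}}\simeq\sH_j^{\otimes r}|_{X_{ij}}$, and then torsion-freeness of $\Pic(X_{ij})$ — which holds since $X_{ij}$ with its induced boundary is again a log Fano manifold by Theorem \ref{fujinothm}\eqref{fujinothm1} and Lemma \ref{picard} — upgrades this to $\sH_i|_{X_{ij}}\simeq\sH_j|_{X_{ij}}$.

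Granting $\mu\bigl((\sH_i)_i\bigr)=0$, I still need the unique-minimal-stratum hypothesis of Proposition \ref{picglue} before I can conclude. Here I would argue as in Corollary \ref{cpnt}/Theorem \ref{fujinothm}: even though I have not yet shown $\sX$ is Fano, the combinatorial fact that $\sX$ has a unique minimal stratum can be extracted from the unique-minimal-stratum property of each connected component of each $D_i$, which holds because $D_i$ is an snc Fano variety (Theorem \ref{fujinothm}\eqref{fujinothm1}); chasing these down the chain of components gives a unique minimal stratum for $\sX$. Then Proposition \ref{picglue} produces $\sL\in\Pic(\sX)$ with $\sL|_{X_i}\simeq\sH_i$ for all $i$. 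Finally, by Remark \ref{adjunctionrmk} the sheaf $\omega_\sX^\vee$ restricts on each $X_i$ to $\sO_{X_i}(-(K_{X_i}+D_i))\simeq\sH_i^{\otimes r}\simeq\sL^{\otimes r}|_{X_i}$, so $\omega_\sX^\vee$ and $\sL^{\otimes r}$ restrict isomorphically to every component; since $\eta$ in Proposition \ref{picglue} is injective, $\omega_\sX^\vee\simeq\sL^{\otimes r}$. Ampleness of $\omega_\sX^\vee$ then follows from ampleness of each $-(K_{X_i}+D_i)$: an invertible sheaf on a projective snc variety whose restriction to every irreducible component is ample is itself ample (e.g. by the Nakai--Moishezon criterion, or by \cite{kollarbook}), so $\sX$ is an snc Fano variety, and $\sL$ exhibits its snc Fano index as divisible by $r$. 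The main obstacle I expect is the bookkeeping that makes the gluing data genuinely compatible with adjunction — i.e. that the conductor structure on $X_{ij}$ inside $\sX$ matches $D_i|_{X_{ij}}$ minus the $X_{ij}$-term in the right way — so that the restriction identities $\sH_i|_{X_{ij}}\simeq\sH_j|_{X_{ij}}$ come out; the rest is assembling Theorem \ref{glue}, Proposition \ref{picglue}, and Remark \ref{adjunctionrmk}.
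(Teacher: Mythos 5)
Your proposal is correct and follows the same route the paper intends (the paper gives no detailed proof, only the attribution to Theorem \ref{glue}, Proposition \ref{picglue} and Theorem \ref{fujinothm}\eqref{fujinothm2}): glue via Theorem \ref{glue}, check $\mu\bigl((\sH_i)_i\bigr)=0$ by adjunction on $X_{ij}$ plus torsion-freeness of $\Pic(X_{ij})$, descend $\sL$ with Proposition \ref{picglue}, and deduce $\omega_{\sX}^{\vee}\simeq\sL^{\otimes r}$ and its ampleness componentwise. The only loose point is the early remark that connectedness makes $\sX$ projective — it does not, but this is harmless since projectivity follows at the end once the ample invertible sheaf $\omega_{\sX}^{\vee}$ is in hand on the complete scheme $\sX$.
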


Nowadays, thanks to the recent progress of minimal model program, 
we also know the following results. 

\begin{thm}[{\cite[Theorem 1]{zhang}}]\label{rat_conn}
Let $(X, D)$ be a log Fano manifold. 
Then $X$ is a rationally connected variety, that is, arbitrary two closed points in $X$ are 
joined by an irreducible rational curve. 
\end{thm}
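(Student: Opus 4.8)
The plan is to reduce rational connectedness of $X$ to the assertion that the maximally rationally connected (MRC) fibration of $X$ has a point as its base, and then to exclude a positive-dimensional base by a positivity argument. As a preliminary observation, $-K_X=-(K_X+D)+D$ is the sum of an ample divisor and an effective one, so $-K_X$ is big; hence $K_X$ is not pseudo-effective and $X$ is uniruled by Boucksom--Demailly--P\u{a}un--Peternell. (When $\rho(X)=1$ one is done immediately: if $D\neq 0$ then $X$ is a Fano manifold by Proposition \ref{rhoonefano}, and Fano manifolds are rationally connected by Campana and by Koll\'ar--Miyaoka--Mori; if $D=0$, $X$ is Fano by definition.) The real content of the theorem is to promote uniruledness to rational connectedness.

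Suppose $X$ is not rationally connected. Choose a smooth projective variety $X'$ together with a birational morphism $p\colon X'\to X$ and a morphism $g\colon X'\to Z$ onto a smooth projective variety $Z$ realizing the MRC fibration of $X$, with $\dim Z\geq 1$. By the construction of the MRC fibration (Koll\'ar--Miyaoka--Mori, Campana) the general fibre $F$ of $g$ is rationally connected, and by Graber--Harris--Starr the base $Z$ is not uniruled, so $K_Z$ is pseudo-effective by Boucksom--Demailly--P\u{a}un--Peternell. Writing $K_{X'}=p^*K_X+E$ with $E\geq 0$ exceptional, we obtain $-K_{X'}=p^*(-(K_X+D))+p^*D-E$, so that $(-K_{X'}\cdot\ell)>0$ for a general complete-intersection curve $\ell$ on $X'$; thus $X'$ carries a covering family of free rational curves. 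A priori, however, every such curve could be contracted by $g$, and this is exactly the point one must rule out.

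The crux---and the main obstacle---is to produce a covering family of rational curves on $X'$ that is \emph{not} contracted by $g$; this is where the full strength of the hypothesis (that $-(K_X+D)$ is ample, not merely that $-K_X$ is big) is needed. Following Zhang \cite{zhang}, one spreads the whole configuration out over a finitely generated $\Z$-subalgebra and reduces modulo a sufficiently large prime. Over the resulting algebraically closed field of positive characteristic, the Miyaoka--Mori bend-and-break technique, applied to Frobenius twists of a general complete-intersection curve lying over a general moving curve $B\subset Z$---using the ampleness of $p^*(-(K_X+D))$ together with the inequality $(K_Z\cdot B)\geq 0$, which holds because $K_Z$ is pseudo-effective and $B$ moves---yields, through a general point of $X'$, a rational curve whose image in $Z$ is not a point. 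Such a curve makes $Z$ uniruled, contradicting the previous paragraph. Hence $\dim Z=0$ and $X$ is rationally connected. This is precisely \cite[Theorem 1]{zhang} applied to the log canonical pair $(X,D)$ with $-(K_X+D)$ ample; a characteristic-zero argument running the minimal model program for log canonical pairs is also available (Hacon--McKernan), but the bookkeeping with singular intermediate models is more delicate, which is why reduction modulo $p$ is the cleanest route.
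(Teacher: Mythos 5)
The paper does not actually prove this statement: it is quoted verbatim from Zhang \cite{zhang}, so there is no internal argument to compare yours against, and a proposal that ultimately defers to that reference---as yours does in its final paragraph---is in effect doing the same thing the paper does. Two points nevertheless deserve attention. First, Zhang's Theorem~1 is stated for \emph{Kawamata log terminal} pairs, whereas the pair $(X,D)$ of a log Fano manifold with $D$ reduced and nonzero is only log canonical. Before invoking the theorem you must replace $D$ by $(1-\epsilon)D$, observing that $(X,(1-\epsilon)D)$ is klt (as $D$ is snc with coefficients $<1$) and that $-(K_X+(1-\epsilon)D)=-(K_X+D)+\epsilon D$ is still ample for $0<\epsilon\ll 1$ because the ample cone is open. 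Your phrase ``applied to the log canonical pair $(X,D)$'' skips this (routine but necessary) reduction.

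Second, the paragraph you present as the crux is the weakest part of the write-up. The claim that $(-K_{X'}\cdot\ell)>0$ for a general complete intersection curve $\ell$ on the resolution $X'$ does not follow from what you wrote: $-K_{X'}=p^*(-K_X)-E'$ with $E'\geq 0$ exceptional, and a nonzero $E'$ has positive degree on every complete intersection curve, so the sign is not controlled (uniruledness of $X'$ is nevertheless true, being inherited birationally from the uniruledness of $X$). More importantly, the mechanism you describe---spreading out, reducing mod $p$, and running bend-and-break on Frobenius twists of curves lying over a moving curve $B\subset Z$ to produce a horizontal rational curve on $X'$---is not Zhang's argument and is not shown to work as stated: bend-and-break by itself gives no control preventing the resulting rational curves from being contracted by $g$, which is precisely the difficulty you yourself identify. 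Zhang instead proves that the base of any fibration of a log $\Q$-Fano variety is either a point or uniruled, by using semipositivity of direct image sheaves to establish $(K_Z\cdot B)<0$ for a covering family of curves $B$ on $Z$ and then applying the Miyaoka--Mori uniruledness criterion (the characteristic-$p$ input is confined to that criterion); this contradicts the pseudo-effectivity of $K_Z$ supplied by Graber--Harris--Starr. Since you explicitly fall back on \cite{zhang} for the details, the proposal is acceptable as an annotated citation, but the sketch in between would not survive as a self-contained proof.
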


\begin{thm}[{\cite[Corollary 1.3.2]{BCHM}}]\label{dream}
Let $(X, D)$ be a log Fano manifold. 
Then $X$ is a Mori dream space $($see \cite{HK} for definition$)$. 
\end{thm}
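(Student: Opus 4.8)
The statement to prove is Theorem~\ref{dream}: every log Fano manifold $(X,D)$ is a Mori dream space. The plan is to invoke the finite generation results of \cite{BCHM} after observing that a log Fano pair is, in particular, a klt pair with $-(K_X+D)$ ample, so that $(X, \Delta)$ is log Fano in the sense that makes the relevant Cox ring finitely generated. First I would recall the characterization (due to Hu--Keel \cite{HK}) that a $\Q$-factorial normal projective variety $X$ with $\Pic(X)_{\Q}=N^1(X)_{\Q}$ is a Mori dream space if and only if its Cox ring $\bigoplus_{L} H^0(X, L)$, the sum taken over a finite set of divisors whose classes generate $\Pic(X)$, is a finitely generated $\Bbbk$-algebra; since $X$ is smooth here it is automatically $\Q$-factorial, and by Lemma~\ref{picard} (together with the vanishing $H^1(X,\sO_X)=0$, which follows from Kodaira vanishing applied to $K_X = (K_X+D) - D$ and the fact that $-(K_X+D)$ is ample while $D$ is effective) the Picard group is finitely generated and equals the relevant Néron--Severi lattice.

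Next I would reduce the finite-generation statement to the BCHM machinery. The key point is that for a log Fano manifold $(X,D)$, any effective $\Q$-divisor $\Delta$ with $(X,\Delta)$ klt and $-(K_X+\Delta)$ big can be handled: one perturbs $D$ to a klt boundary. Concretely, since $-(K_X+D)$ is ample and $X$ is smooth, for a small enough rational $\varepsilon>0$ and a general effective divisor $A \sim_{\Q} -(K_X+D)$, the pair $(X, D' )$ with $D' = (1-\varepsilon)D + \varepsilon' A$ is klt and $-(K_X+D')$ is still ample; thus $(X, D')$ is a klt Fano pair in the sense of \cite[Corollary~1.3.2]{BCHM}, which asserts precisely that such pairs are Mori dream spaces. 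I would spell out that \cite[Corollary~1.3.2]{BCHM} gives finite generation of the Cox ring of $X$, hence $X$ is a Mori dream space by the Hu--Keel criterion, independently of which boundary $D'$ one chooses, because being a Mori dream space is a property of $X$ alone.

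The main obstacle, and the one place where care is needed, is the perturbation argument ensuring that one can pass from the reduced (hence log canonical but not klt) boundary $D$ to a genuinely klt boundary $D'$ without losing ampleness of $-(K_X+\Delta)$; this uses that $D$ is simple normal crossing (so multiplier ideals and log canonical thresholds along $D$ are transparent) and that $-(K_X+D)$ lies in the interior of the ample cone, leaving room for the perturbation. Everything else is a direct citation: the cone theorem input is not even needed here, only the finite generation of the Cox ring from \cite{BCHM} and the equivalence from \cite{HK}. I would therefore keep the proof short, roughly: (i) $X$ is smooth projective, hence $\Q$-factorial, with $\Pic(X)=N^1(X)$ of finite rank by Lemma~\ref{picard}; (ii) perturb $D$ to a klt boundary $D'$ with $-(K_X+D')$ ample; (iii) apply \cite[Corollary~1.3.2]{BCHM} to conclude $X$ is a Mori dream space.
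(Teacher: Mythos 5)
Your proposal is correct and takes essentially the same route as the paper, which proves nothing itself and simply cites \cite[Corollary 1.3.2]{BCHM}: the only content to supply is the standard perturbation replacing the reduced (log canonical, non-klt) boundary $D$ by a klt one, e.g.\ $(1-\varepsilon)D$ for small rational $\varepsilon>0$, for which $-(K_X+(1-\varepsilon)D)=-(K_X+D)+\varepsilon D$ stays ample by openness of the ample cone, together with the observation that $X$ smooth and $\Pic(X)=N^1(X)$ (Lemma~\ref{picard}) puts one in the Hu--Keel framework. Your write-up supplies exactly these implicit steps, so the argument is fine.
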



\subsection{Bundles and subbundles}\label{bdle_section}


In this subsection, we recall some bundle structures. 
The following lemma is well-known. 

\begin{lemma}\label{lemP}
Let $X$ be an irreducible variety, $D\subset X$ be an effective Cartier divisor 
and $c$ be a nonnegative integer. 
Let $\pi\colon X\rightarrow Y$ be a $\pr^c$-bundle such that 
$\pi|_D\colon D\rightarrow Y$ is a $\pr^{c-1}$-subbundle. 
That is, $\pi$ is a proper and smooth morphism such that 
$\pi^{-1}(y)\simeq\pr^c$ and $(\pi|_D)^{-1}(y)$ is isomorphic to a hyperplane section 
under this isomorphism for any closed point $y\in Y$. 
Then there exists a commutative diagram of $Y$-morphisms
\[
\begin{CD}
D                                  @>{\iota}>> X                                \\
@V{\iota_D}VV                                 @V{\iota_X}VV               \\
\pr_Y((\pi|_D)_*\sN_{D/X}) @>>j>         \pr_Y(\pi_*\sO_X(D)),
\end{CD}
\]
where 
\begin{itemize}
\item
$\iota$ is the inclusion map, 
\item
both $\iota_D$ and $\iota_X$ are isomorphisms, 
\item
$j$ is obtained by the natural surjection 
\[
\pi_*\sO_X(D)\rightarrow (\pi|_D)_*\sN_{D/X},
\]
where $\sN_{D/X}$ is the normal sheaf $\sO_D(D)$. 
\end{itemize}
Furthermore, we have $D\in|\sO_{\pr}(1)|$ under these isomorphisms, 
where $\sO_\pr(1)$ is the tautological invertible sheaf on $\pr_Y(\pi_*\sO_X(D))$. 
\end{lemma}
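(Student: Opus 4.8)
The plan is to reduce everything to the affine-local statement that a $\pr^c$-bundle together with a relative hyperplane-subbundle structure on $D$ reconstructs $X$ as $\pr_Y$ of the pushforward $\pi_*\sO_X(D)$, with $D$ cut out by a section of $\sO_\pr(1)$. First I would set $\sE:=\pi_*\sO_X(D)$ and $\sF:=(\pi|_D)_*\sN_{D/X}$, and observe that since $\pi$ is a $\pr^c$-bundle these pushforwards are locally free: indeed on a fiber $\pi^{-1}(y)\simeq\pr^c$ the sheaf $\sO_X(D)|_{\pi^{-1}(y)}\simeq\sO_{\pr^c}(1)$ has $h^0=c+1$ and vanishing higher cohomology, while $\sN_{D/X}|_{(\pi|_D)^{-1}(y)}\simeq\sO_{\pr^{c-1}}(1)$ has $h^0=c$ and vanishing higher cohomology; so cohomology-and-base-change gives that $\sE$ has rank $c+1$, $\sF$ has rank $c$, both compatible with base change, and moreover $\pi_*\sO_X(D)\to(\pi|_D)_*\sN_{D/X}$ is surjective (it is surjective on each fiber of $Y$ since $H^0(\pr^c,\sO(1))\to H^0(\pr^{c-1},\sO(1))$ is). The kernel is a line bundle $\sL$ on $Y$ — this is the line that classifies the subbundle $D$ fiberwise — giving a short exact sequence $0\to\sL\to\sE\to\sF\to 0$, hence a closed embedding $j\colon\pr_Y(\sF)\hookrightarrow\pr_Y(\sE)$ lying in the linear system $|\sO_\pr(1)\otimes p^*\sL^\vee|$, or after twisting, in $|\sO_\pr(1)|$ for a suitably normalized tautological sheaf.

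Next I would produce the map $\iota_X\colon X\to\pr_Y(\sE)$. The evaluation $\pi^*\pi_*\sO_X(D)\to\sO_X(D)$ is surjective (it is so on each fiber, being $H^0(\pr^c,\sO(1))\otimes\sO_{\pr^c}\to\sO_{\pr^c}(1)$), hence by the universal property of $\pr_Y(\sE)$ it defines a $Y$-morphism $\iota_X\colon X\to\pr_Y(\sE)$ with $\iota_X^*\sO_\pr(1)\simeq\sO_X(D)$. Fiberwise over $y\in Y$ this is exactly the Veronese-trivial map $\pr^c\xrightarrow{\ |\sO(1)|\ }\pr^c$ attached to a base-point-free complete linear system giving an isomorphism onto $\pr^c$; so $\iota_X$ is a fiberwise isomorphism of $\pr^c$-bundles over $Y$, hence an isomorphism. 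The same argument applied to $\pi|_D$, using $\sN_{D/X}=\sO_D(D)$ and the subbundle hypothesis, gives the isomorphism $\iota_D\colon D\xrightarrow{\sim}\pr_Y(\sF)$ with $\iota_D^*\sO_\pr(1)\simeq\sN_{D/X}$.

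It then remains to check that the square commutes, i.e. $\iota_X\circ\iota=j\circ\iota_D$ as maps $D\to\pr_Y(\sE)$, and that $D=\iota_X^{-1}(j(\pr_Y(\sF)))$, which together with $\iota_X^*\sO_\pr(1)\simeq\sO_X(D)$ yields the final assertion $D\in|\sO_\pr(1)|$ under these identifications. Commutativity is a diagram chase with the two surjections: restricting $\pi^*\sE\to\sO_X(D)$ to $D$ factors through the quotient $\sF$ via $(\pi|_D)^*\sF\to\sN_{D/X}$, by functoriality of pushforward and the base-change identification of $\sO_X(D)|_D$ with $\sN_{D/X}$; unwinding the universal properties of the two projective bundles this is exactly the statement $\iota_X\circ\iota=j\circ\iota_D$. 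For the scheme-theoretic equality $D=\iota_X^{-1}(j(\pr_Y(\sF)))$ I would argue that both are effective Cartier divisors on $X$ restricting to a hyperplane in each fiber $\pr^c$ and agreeing there, and since $\iota_X$ is an isomorphism over $Y$ they coincide. The main obstacle I expect is the bookkeeping of the twist: the natural sequence identifies $j(\pr_Y(\sF))$ with $|\sO_\pr(1)\otimes p^*\sL^\vee|$ rather than literally $|\sO_\pr(1)|$, so one must either absorb $\sL$ into the normalization of $\sE$ (replacing $\sE$ by $\sE\otimes\sL$, which does not change $\pr_Y(\sE)$ but shifts $\sO_\pr(1)$) or, equivalently, observe via $\iota_X^*\sO_\pr(1)\simeq\sO_X(D)$ that after this normalization the divisor $D$ and the tautological sheaf match on the nose; everything else is standard cohomology-and-base-change together with the universal property of $\pr_Y(-)$.
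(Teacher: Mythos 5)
Your argument is correct; note that the paper offers no proof of this lemma at all (it is simply declared ``well-known''), and what you have written is precisely the standard argument the author is implicitly relying on: cohomology and base change to get local freeness and surjectivity of $\pi_*\sO_X(D)\to(\pi|_D)_*\sN_{D/X}$, the universal property of $\pr_Y(-)$ applied to the surjective evaluation maps, and a fiberwise check that the resulting $Y$-morphisms are isomorphisms. One small simplification you could make: the kernel of $\pi_*\sO_X(D)\to(\pi|_D)_*\sN_{D/X}$ is not merely \emph{some} line bundle $\sL$ but is canonically $\pi_*\sO_X\simeq\sO_Y$, being the pushforward of the sequence $0\to\sO_X\to\sO_X(D)\to\sN_{D/X}\to 0$ defined by the section cutting out $D$; this makes the subbundle $\pr_Y((\pi|_D)_*\sN_{D/X})$ the zero locus of a genuine section of $\sO_\pr(1)$, so the normalization/twist issue you flag at the end never actually arises and the final assertion $D\in|\sO_\pr(1)|$ falls out immediately.
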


Next, we consider $\sQ^{c+1}$-bundles and $\sQ^c$-subbundles.  

\begin{definition}\label{Qq}
Let $\pi\colon X\rightarrow Y$ be a morphism between irreducible varieties 
and $c$ be a positive integer. 
We say that $\pi\colon X\rightarrow Y$ is a \emph{$\sQ^{c+1}$-bundle} if 
$\pi$ is a proper and flat morphism such that 
$\pi^{-1}(y)$ is (scheme theoretically) isomorphic to a hyperquadric in $\pr^{c+2}$. 

For a $\sQ^{c+1}$-bundle $\pi\colon X\rightarrow Y$ and 
an effective Cartier divisor $D$ on $X$, we say that 
$\pi|_D\colon D\rightarrow Y$  is a \emph{$\sQ^c$-subbundle} of $\pi$ if 
$(\pi|_D)^{-1}(y)$ is isomorphic to a hyperplane section under the isomorphism 
$\pi^{-1}(y)\simeq\sQ^{c+1}$ for any closed point $y\in Y$. 
We note that the morphisms $\pi$ and $\pi|_D$ is not needed to be smooth. 
(That is why we do not use the symbol $\Q$ but $\sQ$.)
\end{definition}

\begin{lemma}\label{lemQ}
Let $X$ be an irreducible variety, $D\subset X$ be an effective Cartier divisor, 
$Y$ be a smooth variety and $c$ be a positive integer. 
Suppose that $\pi\colon X\rightarrow Y$ is a $\sQ^{c+1}$-bundle and 
$\pi|_D\colon D\rightarrow Y$ is a $\sQ^c$-subbundle. 
Then we have: 
\begin{enumerate}
\renewcommand{\theenumi}{\roman{enumi}}
\renewcommand{\labelenumi}{\rm{(\theenumi)}}
\item\label{lemQ1}
The natural sequence 
\[
0\rightarrow\sO_Y\rightarrow\pi_*\sO_X(D)\rightarrow(\pi|_D)_*\sN_{D/X}\rightarrow 0
\]
is exact. 
\item\label{lemQ2}
$\pi_*\sO_X(D)$ and $(\pi|_D)_*\sN_{D/X}$ are locally free of rank $c+3$ and $c+2$, 
respectively. In particular, $P:=\pr_Y(\pi_*\sO_X(D))$ is a $\pr^{c+2}$-bundle over $Y$ 
and $H:=\pr_Y((\pi|_D)_*\sN_{D/X})$ is a $\pr^{c+1}$-subbundle. 
\item\label{lemQ3}
The natural homomorphism 
\[
\pi^*\pi_*\sO_X(D)\rightarrow\sO_X(D)
\]
is surjective, and it induces a relative quadric embedding 
$X\hookrightarrow P$ over $Y$. 
\item\label{lemQ4}
$D$ is isomorphic to the complete intersection $X\cap H$ in $P$ under these embeddings. 
\end{enumerate}
\end{lemma}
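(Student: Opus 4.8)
The plan is to reduce everything to the well-understood statement that a $\sQ^{c+1}$-bundle embeds, fibrewise, as a quadric hypersurface in a $\pr^{c+2}$-bundle, and that the subbundle $D$ cuts out a hyperplane section in each fibre. First I would establish \eqref{lemQ2} by proving that $\pi_*\sO_X(D)$ and $(\pi|_D)_*\sN_{D/X}$ are locally free with the stated ranks. On a fibre $X_y=\pi^{-1}(y)\simeq\sQ^{c+1}\subset\pr^{c+2}$, the restriction $\sO_X(D)|_{X_y}$ is $\sO_{\sQ^{c+1}}(1)$ because $D\cap X_y$ is a hyperplane section, and $h^0(\sQ^{c+1},\sO(1))=c+3$, $h^{>0}(\sQ^{c+1},\sO(1))=0$; similarly $\sN_{D/X}|_{(\pi|_D)^{-1}(y)}\simeq\sO_{\sQ^c}(1)$ has $h^0=c+2$ and no higher cohomology. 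Since $Y$ is smooth (in particular reduced) and the fibre dimensions of $h^0$ are constant, cohomology and base change gives that both direct images are locally free of ranks $c+3$ and $c+2$, that their formation commutes with base change, and that $R^{>0}\pi_*$ of the relevant sheaves vanish. This yields the bundle descriptions $P=\pr_Y(\pi_*\sO_X(D))$ and $H=\pr_Y((\pi|_D)_*\sN_{D/X})$.

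Next I would prove \eqref{lemQ1}. The inclusion $\sO_X\hookrightarrow\sO_X(D)$ has cokernel $\sN_{D/X}$ (via the ideal sheaf sequence $0\to\sO_X\to\sO_X(D)\to\sO_D(D)\to 0$), so applying $\pi_*$ gives a left-exact sequence $0\to\pi_*\sO_X\to\pi_*\sO_X(D)\to(\pi|_D)_*\sN_{D/X}\to R^1\pi_*\sO_X$. Now $\pi_*\sO_X=\sO_Y$ since the fibres of $\pi$ are connected quadrics with $h^0(\sO)=1$, and $R^1\pi_*\sO_X=0$ since $h^1(\sQ^{c+1},\sO)=0$ and again cohomology and base change applies over the smooth (hence reduced) base. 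Surjectivity on the right follows, which is exactly \eqref{lemQ1}. Comparing ranks $c+3=1+(c+2)$ reconfirms the count from the previous step.

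For \eqref{lemQ3}, the natural evaluation $\pi^*\pi_*\sO_X(D)\to\sO_X(D)$ is surjective because, fibrewise, it is the evaluation $H^0(\sQ^{c+1},\sO(1))\otimes\sO_{\sQ^{c+1}}\to\sO_{\sQ^{c+1}}(1)$, which is surjective as $\sO_{\sQ^{c+1}}(1)$ is globally generated (it is the restriction of $\sO_{\pr^{c+2}}(1)$); surjectivity of a map of coherent sheaves can be checked fibrewise. This surjection defines a $Y$-morphism $X\to P=\pr_Y(\pi_*\sO_X(D))$ with $\sO_\pr(1)$ pulling back to $\sO_X(D)$; on each fibre it is the identification $\sQ^{c+1}\hookrightarrow\pr^{c+2}$, hence it is a closed embedding realizing $X$ as a relative quadric. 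Finally \eqref{lemQ4}: the surjection $\pi_*\sO_X(D)\to(\pi|_D)_*\sN_{D/X}$ of \eqref{lemQ1} gives the linear subbundle $H\subset P$, and restricting the evaluation map to $D$ shows the embedding $D\hookrightarrow P$ factors through $H$ with $\sO_\pr(1)|_D\simeq\sN_{D/X}=\sO_D(D)$. Fibrewise $D\cap X_y$ is the hyperplane section $X_y\cap H_y$, so $D\subseteq X\cap H$ scheme-theoretically; since both are effective Cartier divisors on $X$ in the class $\sO_X(D)=\sO_X(H)|_X$ restricting to the same divisor on every fibre, they coincide, i.e.\ $D=X\cap H$ as the complete intersection. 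I expect the only delicate point to be the repeated appeals to cohomology and base change—making sure the base is reduced so that local freeness and base-change-compatibility of the direct images genuinely follow from constancy of the fibrewise $h^0$; everything else is a fibrewise check plus the standard $\pr(\sE)$ formalism.
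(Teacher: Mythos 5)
Your proposal is correct and follows essentially the same route as the paper: fibrewise cohomology of $\sO(1)$ on hyperquadrics plus cohomology and base change for (i)--(ii), global generation for the relative embedding in (iii), and the induced embedding $D\hookrightarrow H$ for (iv). The only (harmless) divergence is at the very end of (iv), where you deduce $D=X\cap H$ by comparing two effective Cartier divisors in the class $\sO_\pr(1)|_X$ that agree on every fibre, whereas the paper identifies $H$ with the bundle of the unique hyperplanes containing the fibres of $D$; both arguments close the same small gap.
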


\begin{proof}
For any hyperquadric $\sQ^{c+1}$ in $\pr^{c+2}$, 
\[
h^0(\sQ^{c+1},\sO_{\sQ^{c+1}}(1))=c+3,\quad h^1(\sQ^{c+1},\sO_{\sQ^{c+1}}(1))=0
\]
holds. Hence we get the results of \eqref{lemQ1} and \eqref{lemQ2} 
by cohomology and base change theorem. 

The surjectivity of the homomorphism in \eqref{lemQ3} is easily shown 
since $\sO_{\sQ^{c+1}}(1)$ is generated by global sections. 
We can also show that this surjection induces 
a relative quadric embedding $X\hookrightarrow P$. Thus we have shown \eqref{lemQ3}. 

Similarly, we get a surjection 
\[
(\pi|_D)^*(\pi|_D)_*\sN_{D/X}\rightarrow\sN_{D/X}
\]
and this gives a relative quadric embedding $D\hookrightarrow H$. 
Then the composition $D\hookrightarrow H\subset P$ is equal to 
$D\subset X\hookrightarrow P$ by construction. 

Now we prove that $D=X\cap H$ in $P$ under these embeddings. 
Let $p\colon P\rightarrow Y$ be the projection. 
We note that for any closed point $y\in Y$ there exists a unique hyperplane $H^0_y\subset p^{-1}(y)$ containing $(\pi|_D)^{-1}(y)$. 
Indeed, if there exist two such distinct hyperplanes 
$H^1_y$, $H^2_y\subset p^{-1}(y)$ containing $(\pi|_D)^{-1}(y)$, 
then $(\pi|_D)^{-1}(y)$ is contained in the reduced $c$-dimensional linear subspace 
$H^1_y\cap H^2_y$, which is a contradiction. 
Let $H^0:=\bigcup_{y\in Y}H^0_y\subset P$. Then $H^0$ is the unique divisor of $P$ 
which is a $\pr^{c+1}$-subbundle containing $D$. 
Therefore, $H=H^0$ and $D=X\cap H$, since $D=X\cap H^0$ by construction. 
\end{proof}

\begin{lemma}\label{PQlem}
Let $X$ be an irreducible variety such that $h^1(X, \sO_X)=0$. 
\begin{enumerate}
\renewcommand{\theenumi}{\arabic{enumi}}
\renewcommand{\labelenumi}{$(\theenumi)$}
\item\label{PQlem1}
Let $c$ be a nonnegative integer and $p_1\colon X\times\pr^c\rightarrow X$, 
$p_2\colon X\times\pr^c\rightarrow\pr^c$ be the projections. 
Then 
\[
(p_1)_*(p_2^*\sO_{\pr^c}(1))\simeq\sO_X^{\oplus c+1}.
\]
\item\label{PQlem2}
Let $c\geq 2$ and  $p_1\colon X\times\Q^c\rightarrow X$, 
$p_2\colon X\times\Q^c\rightarrow\Q^c$ be the projections. 
Then 
\[
(p_1)_*(p_2^*\sO_{\Q^c}(1))\simeq\sO_X^{\oplus c+2}.
\]
\end{enumerate}
\end{lemma}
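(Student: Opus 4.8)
The plan is to reduce both statements to the cohomology-and-base-change machinery, exactly as in the proof of Lemma \ref{lemQ}, but now with the extra information that the ambient family is a trivial product $X\times V$ with $V=\pr^c$ or $V=\Q^c$. The key point is that all the fibers are literally isomorphic, so the cohomology groups $H^0$ and $H^1$ of the twisting sheaf are constant, and moreover there is a distinguished trivialization coming from the product structure. First I would record, for $V=\pr^c$ ($c\geq 0$) resp.\ $V=\Q^c$ ($c\geq 2$), that $h^0(V,\sO_V(1))=c+1$ resp.\ $c+2$ and $h^1(V,\sO_V(1))=0$; the latter vanishing is classical (for the quadric one can use the hyperplane sequence $0\to\sO_{\pr^{c+1}}\to\sO_{\pr^{c+1}}(1)\to\sO_{\Q^c}(1)\to 0$ and $h^1(\pr^{c+1},\sO)=h^2(\pr^{c+1},\sO)=0$, which is why the hypothesis $c\geq 2$ is needed to keep $\Q^c$ irreducible and $h^1(\Q^c,\sO_{\Q^c})=0$). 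Since $p_2^*\sO_V(1)$ restricts to $\sO_V(1)$ on every fiber of $p_1$, cohomology and base change gives that $(p_1)_*(p_2^*\sO_V(1))$ is locally free of rank $c+1$ resp.\ $c+2$, and that its formation commutes with base change.

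Next I would produce an explicit trivialization rather than merely counting ranks. Let $W:=H^0(V,\sO_V(1))$, a $\Bbbk$-vector space of the stated dimension, and consider the pullback $p_2^*\sO_V(1)$ on $X\times V$. Each element of $W$ is a global section of $\sO_V(1)$, hence pulls back to a global section of $p_2^*\sO_V(1)$, which is to say an element of $H^0(X,(p_1)_*p_2^*\sO_V(1))$; this defines a $\Bbbk$-linear map $W\to H^0(X,(p_1)_*p_2^*\sO_V(1))$ and, by adjunction, a homomorphism of $\sO_X$-modules
\[
W\otimes_\Bbbk\sO_X\longrightarrow (p_1)_*(p_2^*\sO_V(1)).
\]
I claim this is an isomorphism. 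Both sides are locally free of the same rank $\dim_\Bbbk W$, so it suffices to check that the map is surjective, and by Nakayama it suffices to check surjectivity after restricting to each closed point $x\in X$. By the base-change statement the fiber of the right-hand side at $x$ is $H^0(V,\sO_V(1))=W$, and under this identification the fiber of the map is the identity of $W$ (the restriction of $p_2^*s$ to $\{x\}\times V\simeq V$ is just $s$). Hence the map is an isomorphism, and choosing a basis of $W$ gives $(p_1)_*(p_2^*\sO_V(1))\simeq\sO_X^{\oplus c+1}$ resp.\ $\sO_X^{\oplus c+2}$, which is the two assertions.

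The hypothesis $h^1(X,\sO_X)=0$ is what I would watch for: it is not needed to get local freeness or the base-change identification of fibers, but it is needed to know that there are no "twisted" ways of gluing, i.e.\ that the locally free sheaf, once known to have trivial determinant on fibers, is globally trivial and not just fiberwise trivial. Concretely, the argument above already exhibits an honest trivialization, so the role of $h^1(X,\sO_X)=0$ is really to guarantee that $p_2^*\sO_V(1)$ itself is the only twist in play — equivalently, that $\Pic(X\times V)\simeq\Pic(X)\oplus\Pic(V)$, so that a line bundle restricting to $\sO_V(1)$ on fibers and to $\sO_X$ on $\{x\}\times V$'s complement... more precisely, it ensures the Leray/Künneth comparison $H^0(X\times V,p_2^*\sO_V(1))\simeq H^0(X,\sO_X)\otimes H^0(V,\sO_V(1))$ has no correction term. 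I expect the only mildly delicate point to be this bookkeeping — making sure the map $W\otimes\sO_X\to(p_1)_*p_2^*\sO_V(1)$ is the right one and that its fiberwise description is as claimed — while the cohomology input ($h^1(V,\sO_V(1))=0$) is entirely standard.
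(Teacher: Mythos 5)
Your argument is correct, but it is genuinely different from the one in the paper. The paper proves both parts by induction on $c$: it restricts to a hyperplane section ($\pr^{c-1}\subset\pr^c$, resp.\ $\Q^{c-1}\subset\Q^c$, with the base case $\Q^2\simeq\pr^1\times\pr^1$ reducing (2) to (1)), pushes the resulting short exact sequence forward by $(p_1)_*$, and then splits the extension
$0\to\sO_X\to(p_1)_*(p_2^*\sO_V(1))\to\sO_X^{\oplus(\ast)}\to 0$
using precisely the hypothesis $h^1(X,\sO_X)=0$, i.e.\ $\Ext^1_X(\sO_X^{\oplus(\ast)},\sO_X)=0$. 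You instead invoke Grauert/cohomology-and-base-change (legitimate here: $X$ is integral, $p_1$ is projective, and $h^0$ of the fibers is constant since the family is a product) and exhibit the explicit evaluation map $H^0(V,\sO_V(1))\otimes_\Bbbk\sO_X\to(p_1)_*(p_2^*\sO_V(1))$, which is fiberwise the identity and hence an isomorphism; equivalently this is just flat base change along $X\to\Spec\Bbbk$ for the square formed by $V\to\Spec\Bbbk$. Your route is shorter and proves a slightly stronger statement, since --- as you half-observe --- it never uses $h^1(X,\sO_X)=0$ at all. Your closing paragraph speculating about the role of that hypothesis is the one muddled spot: the hypothesis is not secretly needed for "untwisted gluing" in your argument; it is an artifact of the paper's inductive proof, where it is exactly what splits the pushed-forward extension. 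What the paper's approach buys in exchange is that it avoids any appeal to base-change theorems, using only the long exact sequence of $(p_1)_*$ and the elementary vanishing $R^1(p_1)_*\sO_{X\times V}$-type input hidden in the exactness on the right.
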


\begin{proof}
We prove both assertions by induction on $c$. 

\eqref{PQlem1}
The case $c=0$ is trivial. We assume that the assertion holds for the case: $c-1$. 
There has the canonical exact sequence 
\[
0\rightarrow\sO_{X\times\pr^c}\rightarrow p_2^*\sO_{\pr^c}(1)\rightarrow
(p_2|_{X\times\pr^{c-1}})^*\sO_{\pr^{c-1}}(1)\rightarrow 0.
\]
After taking $(p_1)_*$, the sequence 
\[ 
0\rightarrow\sO_X\rightarrow(p_1)_*(p_2^*\sO_{\pr^c}(1))\rightarrow
(p_1|_{X\times\pr^{c-1}})_*\bigl((p_2|_{X\times\pr^{c-1}})^*\sO_{\pr^{c-1}}(1)\bigr)\rightarrow 0
\]
is exact. We note that 
$(p_1|_{X\times\pr^{c-1}})_*((p_2|_{X\times\pr^{c-1}})^*\sO_{\pr^{c-1}}(1))\simeq\sO_X^{\oplus c}$ by the induction step. 
The sequence always splits since $h^1(X, \sO_X)=0$. 
Hence we have proved \eqref{PQlem1}. 

\eqref{PQlem2}
The case $c=2$ is the direct consequence of (1) 
since $\Q^2$ is isomorphic to $\pr^1\times\pr^1$. 
We assume that the assertion holds for the case: $c-1$. 
There has the canonical exact sequence 
\[
0\rightarrow\sO_{X\times\Q^c}\rightarrow p_2^*\sO_{\Q^c}(1)\rightarrow
(p_2|_{X\times\Q^{c-1}})^*\sO_{\Q^{c-1}}(1)\rightarrow 0.
\]
After taking $(p_1)_*$, we have the splitting exact sequence 
\[
0\rightarrow\sO_X\rightarrow(p_1)_*(p_2^*\sO_{\Q^c}(1))\rightarrow
\sO_X^{\oplus c+1}\rightarrow 0
\]
by repeating the same argument in the proof of \eqref{PQlem1}. 
Hence we have proved \eqref{PQlem2}. 
\end{proof}


\subsection{Facts on extremal contractions and its applications}\label{extcont_section}


In this section, we show the structure of the contraction morphism associated 
to a special ray using Wi\'sniewski's inequality. First, 
we give a criterion for a smooth projective variety 
to have the Picard number one in Lemma \ref{rhoone}. 
Second, we give some delicate structure properties for special log Fano manifolds 
in Proposition \ref{structure_cor}, which is essential to prove Theorems \ref{mukai0} 
and \ref{mukai1}. 

We remember Wi\'sniewski's inequality, which plays an essential role in this section. 

\begin{thm}[Wi\'sniewski's inequality \cite{wisn91}]\label{wisn_ineq}
Let $X$ be an $n$-dimensional smooth projective variety and 
$R\subset\overline{\NE}(X)$ be a $K_X$-negative extremal ray with 
the associated contraction morphism $\pi\colon X\rightarrow Y$. 
Then we have the inequality 
\[
\dim\Exc(\pi)+\dim F\geq n+l(R)-1
\]
for any nontrivial fiber $F$ of $\pi$. 
\end{thm}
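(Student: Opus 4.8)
I would prove this by the classical deformation-theoretic argument behind Wi\'sniewski's inequality (cf.\ \cite{KoMo}), combining estimates on the dimensions of $\Hom$-schemes of rational curves with Mori's bend-and-break. Write $E:=\Exc(\pi)$ and $l:=l(R)$. Since the right-hand side $n+l-1$ is fixed, it suffices to prove the inequality for a nontrivial fiber $F$ of minimal dimension; by semicontinuity of fiber dimension such an $F$ is the fiber through a general point of a maximal-dimensional component of $E$ (every fiber of $\pi$ over a point of $\pi(E)$ is positive-dimensional, being swept out by curves whose classes lie in $R$).

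First I would fix a good rational curve. Let $d$ be the minimum of $(-K_X\cdot C)$ over rational curves $C$ with $[C]\in R$ passing through a general point of $E$, so $d\geq l$. Fix such a curve $C$ through a general point $x\in E$, chosen so that the deformations of $C$ cover a dense subset of a component of $E$ of dimension $\dim E$, and let $f\colon\pr^1\to X$ be its normalization with $f(0)=x$. Writing $T_X$ for the tangent bundle and using $\chi(\pr^1,f^*T_X)=(-K_X\cdot C)+n=d+n$ together with $\chi(\pr^1,f^*T_X\otimes\sO_{\pr^1}(-1))=d$, deformation theory gives
\[
\dim_{[f]}\Hom(\pr^1,X)\ \geq\ d+n,\qquad
\dim_{[f]}\Hom(\pr^1,X;\,0\mapsto x)\ \geq\ d.
\]
Let $\sM$ be the irreducible component of $\Hom(\pr^1,X)$ containing $[f]$ and put $\sM_x:=\{[g]\in\sM\mid g(0)=x\}$. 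Every curve parametrized by $\sM$ has class a nonnegative multiple of $[C]\in R$, hence is contracted by $\pi$; so the evaluation $e_0\colon\sM\to X$, $[g]\mapsto g(0)$, has image contained in $E$, and all curves parametrized by $\sM_x$ lie in the fiber $F$ of $\pi$ through $x$.

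The proof then rests on two dimension estimates. First, since the general fiber of $e_0$ has dimension $\dim\sM-\dim\overline{e_0(\sM)}$ and $x$ is general in $\overline{e_0(\sM)}\subseteq E$, we obtain $\dim\sM_x\geq(d+n)-\dim E$. Second, the curves parametrized by $\sM_x$ sweep out a subvariety $\Locus(\sM_x)\subseteq F$ with $\dim\Locus(\sM_x)\geq\dim\sM_x-1$; the point here is that the natural sweeping map from $(\sM_x/\Aut(\pr^1,0))\times\pr^1$ onto $\Locus(\sM_x)$ is generically finite, for otherwise $\Hom(\pr^1,X;\,0\mapsto x,\ \infty\mapsto y)$ would have dimension $\geq 2$ for $y$ a general point of $\Locus(\sM_x)$, and bend-and-break would produce a connected reducible (or non-reduced) degeneration of $C$ through $x$; since $R$ is extremal, one of its components would be a rational curve with class in $R$ through $x$ of anticanonical degree strictly less than $d$, contradicting the choice of $d$. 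Combining the two estimates,
\[
\dim E+\dim F\ \geq\ \dim E+\dim\sM_x-1\ \geq\ \dim E+(d+n-\dim E)-1\ =\ d+n-1\ \geq\ n+l-1.
\]

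The hard part is not any individual estimate — both the deformation bounds and bend-and-break are standard — but the genericity bookkeeping needed to make them fit together: one must choose $C$ so that its deformation family $\sM$ dominates a component of $E$ of maximal dimension, arrange that the single point $x$ is general enough both in $E$ (so that $d$ really is the minimal $R$-degree at $x$ and the bend-and-break contradiction applies) and in $\overline{e_0(\sM)}$ (so that the fiber-dimension formula for $e_0$ holds at $x$), and verify that reducing to a minimal nontrivial fiber is legitimate. The presence of several components of $E$, and the possibility that a general member of $\sM$ is a multiple cover of its image, call for some care but introduce nothing essentially new.
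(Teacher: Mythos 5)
The paper offers no proof of this statement: it is imported verbatim from Wi\'sniewski \cite{wisn91}, so the only meaningful comparison is with the standard literature argument. Your proposal is exactly that argument — the deformation bound $\dim_{[f]}\Hom(\pr^1,X)\geq n+d$, the containment of the total locus in $\Exc(\pi)$ and of the pointed locus in the fiber, and bend-and-break to show the pointed family cannot break — and the two dimension estimates, as well as the way you combine them, are sound.

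The one step that does not hold up as written is the opening reduction. You pass to a nontrivial fiber of minimal dimension and assert that such a fiber is the one through a general point of a \emph{maximal-dimensional} component of $E$. When $E$ (or $\pi(E)$) is reducible there is no reason for this: the minimal fiber dimension may be attained over a component of $\pi(E)$ other than the image of the largest component of $E$, and your argument, run only at a general point of that largest component, then proves the inequality only for fibers over that one component's image. There is also a circularity lurking in requiring $x$ to be simultaneously general in $E$ and general in $\overline{e_0(\sM)}$ when $\sM$ itself is chosen after $x$ (over a countable field the usual ``very general point avoids countably many bad loci'' device is not even available). The repair is easy and in fact simplifies the proof: no genericity is needed anywhere. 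For an arbitrary nontrivial fiber $F$, set $d:=\min\{(-K_X\cdot C)\}$ over rational curves $C\subseteq F$ with $[C]\in R$ (such curves exist by Kawamata's length estimate, and $d\geq l$), take $x$ on a curve realizing $d$, and use that \emph{every} nonempty fiber of $e_0\colon\sM\rightarrow\overline{e_0(\sM)}$ has dimension at least $\dim\sM-\dim\overline{e_0(\sM)}\geq n+d-\dim E$: the lower bound in the fiber-dimension theorem holds at every point of the image, only the upper bound needs a general base point. The bend-and-break contradiction then takes place entirely inside $F$ (the degenerate cycle is connected, contracted by $\pi$, and contains $x$, hence lies in $F$, so a component of degree $<d$ contradicts the minimality of $d$ among rational curves of $R$ contained in $F$), and your final chain of inequalities closes the proof for every nontrivial fiber at once.
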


We give a criterion for a smooth projective variety $X$ being $\rho(X)=1$ 
using Theorem \ref{wisn_ineq}. 

\begin{lemma}\label{rhoone}
Let $X$ be an irreducible smooth projective variety, 
$D\subset X$ be a prime divisor and 
$R\subset\overline{\NE}(X)$ be a $K_X$-negative extremal ray 
with the associated contraction morphism $\pi\colon X\rightarrow Y$ 
such that $(D\cdot R)>0$. 
\begin{enumerate}
\renewcommand{\theenumi}{\arabic{enumi}}
\renewcommand{\labelenumi}{$(\theenumi)$}
\item\label{rhoone1}
If the restriction morphism $\pi|_D:D\rightarrow\pi(D)$ is not birational, 
then $\pi$ is of fiber type, i.e., $\dim Y<\dim X$ holds. 
\item\label{rhoone2}
If $l(R)\geq 3$, then $\pi|_D:D\rightarrow Y$ is not a finite morphism.  
Furthermore, if $\rho(D)=1$ holds in addition, 
then $X$ is a Fano manifold with $\rho(X)=1$. 
\end{enumerate}
\end{lemma}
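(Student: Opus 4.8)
The plan is to extract as much as possible from Wi\'sniewski's inequality (Theorem \ref{wisn_ineq}) applied to the nontrivial fibers of $\pi$, using the extra hypothesis $(D\cdot R)>0$ to force $D$ to meet (and in fact dominate) every fiber. For part \eqref{rhoone1}: suppose toward a contradiction that $\pi$ is birational, so $\dim Y=\dim X=n$ and $\Exc(\pi)\subsetneq X$ is a proper closed subset. Since $(D\cdot R)>0$, the divisor $D$ is not contracted by $\pi$, i.e.\ $\pi(D)$ has dimension $n-1$ and $\pi|_D$ is dominant onto $\pi(D)$; and because $(D\cdot C)>0$ for a curve $C$ with $[C]\in R$, every nontrivial fiber $F$ of $\pi$ meets $D$. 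First I would show that $D\cap F$ is positive–dimensional for at least one nontrivial fiber $F$: indeed if $D\cap F$ were finite for the generic nontrivial fiber, then $\pi|_D$ would be generically finite onto its image, hence birational onto $\pi(D)$ (as $\pi(D)$ is a divisor in the normal — here smooth — $Y$ and $D$ is a divisor in $X$), contradicting the hypothesis that $\pi|_D$ is not birational. So fix a nontrivial fiber $F$ with $\dim(D\cap F)\ge 1$. Now I want to contradict $\dim\Exc(\pi)+\dim F\ge n+l(R)-1\ge n$ (using $l(R)\ge 1$); since $\Exc(\pi)$ is a proper closed subset, $\dim\Exc(\pi)\le n-1$, so this inequality forces $\dim F\ge 1$, which is automatic; the real leverage must come from a finer count. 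The cleaner route is: $F\subset\Exc(\pi)$, and $D\cap\Exc(\pi)$ has dimension $\le n-1$; combining $\dim F\le \dim\Exc(\pi)$ with Wi\'sniewski gives $2\dim\Exc(\pi)\ge n+l(R)-1$, i.e.\ $\dim\Exc(\pi)\ge (n+l(R)-1)/2$. On the other hand, since $D$ meets $F$ and $\pi|_D$ is not birational, a standard fiber–dimension argument on $\pi|_D\colon D\to\pi(D)$ (the generic fiber has dimension $\ge 1$, so $\dim(D\cap\Exc(\pi))\ge \dim D - (\dim\pi(D)-1)=n-1-(n-2)=1$ is too weak) — so instead I would argue directly: a birational contraction $\pi$ of fiber–type–codimension behaviour cannot be compatible with $\pi|_D$ non-birational once $D\cdot R>0$, because the curves in $R$ sweeping out $\Exc(\pi)$ all meet $D$, and $D$ restricted to the generic such curve is positive, forcing $D$ to contain a moving family of curves contracted by $\pi$; but then $\pi|_D$ contracts those curves, so $\pi|_D$ is not finite, and a dominant morphism of projective varieties of the same dimension that is not finite is not birational — wait, that's the wrong direction. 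The honest main obstacle is pinning down precisely why ``$\pi|_D$ not birational'' plus ``$D\cdot R>0$'' is incompatible with $\pi$ birational; I expect the correct argument to be: if $\pi$ is birational then every nontrivial fiber lies in $\Exc(\pi)$, Wi\'sniewski gives $\dim F\ge n+l(R)-1-\dim\Exc(\pi)\ge l(R)\ge 1$ for the fiber through a general point of a component of maximal dimension of $\Exc(\pi)$; but then since $D\cdot R>0$ the restriction $\pi|_D$ has a $(\dim F)$–dimensional fiber over the generic point of $\pi(F)\subset\pi(D)$, while $\dim D-\dim\pi(D)=(n-1)-(n-1)=0$, a contradiction. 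That last line is the crux: $\dim(D\cap F')\le \dim D-\dim\pi(D)=0$ for general $F'$ if $\pi|_D$ is generically finite, but $\pi|_D$ non-birational and generically finite is impossible only if… — so one actually needs the stronger input that $\pi|_D$ is not even generically finite, which is exactly ``$\pi|_D$ not birational $\Rightarrow$ generic fiber of $\pi|_D$ positive-dimensional OR $\pi|_D$ generically finite of degree $\ge 2$''; ruling out the latter is where I'd invoke that $Y$ is smooth (normal) and $\pi(D)$ a prime divisor together with $\pi$ birational to conclude $\deg(\pi|_D)=1$ would follow from $\pi$ being an isomorphism in codimension one along $D$ — this uses $D\cdot R>0$ to say $D$ is not a component of $\Exc(\pi)$ and $\pi$ is an isomorphism near the generic point of $D$. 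So: $\pi$ birational and $D\not\subset\Exc(\pi)$ $\Rightarrow$ $\pi|_D$ birational onto $\pi(D)$, contradiction; done.

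For part \eqref{rhoone2}: assume $l(R)\ge 3$ and suppose $\pi|_D\colon D\to Y$ is finite. Since $(D\cdot R)>0$, $D$ meets every fiber of $\pi$, so $\pi|_D$ finite forces $\dim(D\cap F)=0$ for every fiber $F$, in particular $\dim F\le n-1-0$? — rather, $\dim D\ge \dim F_{D}$ is vacuous; the point is $\dim D=\dim Y$ if $\pi|_D$ is finite and surjective, so $\dim Y=n-1$, i.e.\ $\pi$ is of fiber type with one–dimensional generic fiber? No: $\dim Y=n-1$ means generic fiber of $\pi$ has dimension $1$. But Wi\'sniewski applied to any nontrivial fiber $F$ gives $\dim\Exc(\pi)+\dim F\ge n+l(R)-1\ge n+2$; since $\pi$ is of fiber type $\Exc(\pi)=X$ (or all of the locus of $R$, which is $X$ when $\pi$ is a fiber-type contraction of a ray — in general $\Exc(\pi)=X$ for fiber type), so $n+\dim F\ge n+2$, giving $\dim F\ge 2$ for every nontrivial fiber, hence in particular the generic fiber has dimension $\ge 2$, contradicting $\dim Y=n-1$. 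Therefore $\pi|_D$ is not finite. For the ``furthermore'': if moreover $\rho(D)=1$, then since $\pi|_D$ is not finite it contracts some curve $C_0\subset D$, and $[C_0]$ generates $N_1(D)$; I would then show $\pi|_D$ contracts $D$ to a point, so $D\subset F_0$ for a single fiber $F_0$ with $\dim F_0\ge n-1$; then Wi\'sniewski $\dim\Exc(\pi)+\dim F_0\ge n+l(R)-1$ with $\Exc(\pi)=X$ again forces $\dim F_0\ge l(R)-1\ge 2$, which is consistent, but combined with $D\subset F_0$ and $\dim D=n-1$ one gets $\dim F_0\ge n-1$, hence $\dim Y\le 1$; then analyze: if $\dim Y=1$, $\pi$ is a fibration over a smooth curve $Y$, but $X$ rationally connected (Theorem \ref{rat_conn}) forces $Y\simeq\pr^1$ and then $\rho(X)=\rho(F)+1\ge 2$ — I need $\rho(X)=1$, so this case must be excluded: indeed $D\cdot R>0$ but $D\subset$ one fiber means $D\cdot C'=0$ for $C'$ in another fiber, so $[C']\in R$ forces $D\cdot C'>0$, contradiction — hence $\dim Y=0$, $Y=$ point, $\rho(X)=1$, $-K_X$ ample since $X=\Exc(\pi)$ is covered by the $K_X$-negative curves in $R$ and $\rho=1$; so $X$ is Fano with $\rho(X)=1$.

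The key steps in order, then, are: (i) use $(D\cdot R)>0$ to conclude $D$ meets every fiber of $\pi$ and is not a component of $\Exc(\pi)$; (ii) for \eqref{rhoone1}, deduce from $\pi$ birational and $D\not\subset\Exc(\pi)$ that $\pi|_D$ is birational onto the prime divisor $\pi(D)$ (here smoothness/normality of the relevant target is used), contradicting the hypothesis; (iii) for the first half of \eqref{rhoone2}, note $\pi|_D$ finite forces every fiber to meet $D$ in dimension $0$ while Wi\'sniewski with $l(R)\ge 3$ forces every nontrivial fiber to have dimension $\ge 2$, and $D$ surjecting finitely onto $Y$ would force $\dim Y=n-1$ contradicting that the generic fiber has dimension $\ge 2$; (iv) for the second half, use $\rho(D)=1$ to see $\pi|_D$ contracts all of $D$ to a point, so $D$ sits inside one fiber $F_0$, but then a curve in a different fiber meets $D$ in the empty set while lying in $R$, contradicting $(D\cdot R)>0$ unless there are no other fibers, i.e.\ $Y$ is a point, whence $\rho(X)=1$ and $X$ is Fano. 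I expect step (ii) — making the implication ``$\pi$ birational, $D$ not contracted $\Rightarrow$ $\pi|_D$ birational'' fully rigorous, rather than merely generically finite — to be the main technical obstacle, and I would handle it by localizing at the generic point of $D$: there $\pi$ is an isomorphism (as $D\not\subset\Exc(\pi)$ and $\pi$ is birational), so $\pi|_D$ is an isomorphism near the generic point of $D$, hence birational onto its image.
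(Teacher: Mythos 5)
Your final, cleaned-up version of part (1) is correct and is the paper's own argument read contrapositively: the paper assumes $\pi$ birational and $\pi|_D$ not birational, concludes $D\subset\Exc(\pi)$, hence that $\pi$ is divisorial with exceptional divisor $D$, and contradicts $(D\cdot R)>0$; you instead use $(D\cdot R)>0$ to get $D\not\subset\Exc(\pi)$ and conclude $\pi|_D$ is birational near the generic point of $D$. Likewise the ``furthermore'' of part (2) (numerical proportionality of curves in $D$ forces $\pi(D)$ to be a point, a positive-dimensional $Y$ would produce a fiber disjoint from $D$, hence $Y$ is a point, $\rho(X)=1$, and $X$ is Fano by Kleiman) matches the paper. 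The many false starts should of course be stripped out, but the ideas that survive are the right ones.

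There is, however, a genuine gap in your proof that $\pi|_D$ is not finite when $l(R)\geq 3$. You argue: $\pi|_D$ finite and surjective forces $\dim Y=n-1$, so the generic fiber of $\pi$ is one-dimensional, contradicting Wi\'sniewski. But finiteness of $\pi|_D$ only gives $\dim\pi(D)=n-1$, so $\dim Y\in\{n-1,n\}$; the surjectivity of $\pi|_D$ onto $Y$ is not justified (only the \emph{nontrivial} fibers are forced to meet $D$), and in the case $\dim Y=n$, i.e.\ $\pi$ birational, the generic fiber of $\pi$ is a point and your dimension count yields no contradiction. The paper's argument avoids this entirely and covers both cases at once: take \emph{any} nontrivial fiber $F$ (one exists since $\pi$ is not an isomorphism); then $F\cap D\neq\emptyset$ because $(D\cdot R)>0$, and since $D$ is a Cartier divisor every component of $F\cap D$ has dimension at least $\dim F-1\geq l(R)-2\geq 1$ by Wi\'sniewski's inequality (using only $\dim\Exc(\pi)\leq n$), so $\pi|_D$ contracts a curve. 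This inequality $\dim(F\cap D)\geq\dim F-1$ is the one ingredient your write-up never states, and it is exactly what is needed. The same blind spot recurs in your step (iv): before invoking ``a curve in a different fiber,'' you must know $\pi$ is of fiber type, which the paper gets by applying part (1) to the non-birational map $\pi|_D\colon D\rightarrow\{\mathrm{pt}\}$; your ``unless there are no other fibers'' does not dispose of the birational case, where the other fibers are points and carry no curves of $R$.
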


\begin{proof}
\eqref{rhoone1}
If $\pi$ is of birational type, then it is a divisorial contraction 
and the exceptional divisor is exactly $D$, 
since $\pi|_D:D\rightarrow\pi(D)$ is not birational. However, we get a contradiction 
since $(D\cdot R)>0$. Hence $\pi$ is of fiber type. 

\eqref{rhoone2}
Let us choose an arbitrary nontrivial fiber $F$ of $\pi$. 
We have $D\cap F\neq\emptyset$ since $(D\cdot R)>0$. 
Then 
\[
\dim(F\cap D)\geq\dim F-1\geq l(R)-2\geq 1
\]
by Wi\'sniewski's inequality (Theorem \ref{wisn_ineq}). 
Hence $F\cap D$ contains a curve. 
Now, we assume that the Picard number of $D$ is equal to one. 
Then $\pi(D)$ must be a point 
since all curves in $D$ are numerically proportional. 
Therefore $\pi$ is of fiber type by \eqref{rhoone1}. 
If $\dim Y\geq 1$, then $(D\cdot R)=0$; hence $Y$ is a point. 
In particular, $\rho(X)=1$. Thus $X$ is a Fano manifold since 
there exists a $K_X$-negative extremal ray. 
\end{proof}

We also show that there exists a `special' $K_X$-negative extremal ray 
for a log Fano manifold with nonzero boundary, 
which is essential to classify some special log Fano manifolds. 

\begin{lemma}\label{longray}
Let $(X,D)$ be a log Fano manifold with the log Fano index $r$ and 
the log Fano pseudoindex $\iota$, 
$L$ be a divisor on $X$ such that 
$-(K_X+D)\sim rL$ holds, and assume that $D\neq 0$.  
Then there exists an extremal ray $R\subset\NE(X)$ such that $(D\cdot R)>0$. 
Let $R$ be an extremal ray satisfying $(D\cdot R)>0$ and $\pi\colon X\rightarrow Y$ 
be the contraction morphism associated to $R$. 
Then $R$ is always $K_X$-negative and $l(R)\geq\iota+1$. 
Moreover, the restriction morphism 
\[
\pi|_{D_1}\colon D_1\rightarrow \pi(D_1)
\]
to its image is an algebraic fiber space, 
that is, $(\pi|_{D_1})_*\sO_{D_1}=\sO_{\pi(D_1)}$, 
for any irreducible component $D_1\subset D$. 
Furthermore, for a minimal rational curve $C\subset X$ of $R$, 
we have the following properties:
\begin{enumerate}
\renewcommand{\theenumi}{\arabic{enumi}}
\renewcommand{\labelenumi}{$(\theenumi)$}
\item\label{longray1}
If $l(R)=\iota+1$, then $(D\cdot C)=1$.
\item\label{longray2}
If $l(R)=r+2$ and $r\geq 2$, then $(L\cdot C)=1$ and 
$(D\cdot C)=2$.
\end{enumerate}
\end{lemma}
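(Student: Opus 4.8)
First I would establish the existence of an extremal ray $R$ with $(D\cdot R)>0$: since $D\neq 0$ and $D$ is effective but $-(K_X+D)+D=-K_X$ makes $D$ sit inside a Fano-type situation, if every extremal ray $R'$ satisfied $(D\cdot R')\le 0$ then $D$ would be numerically nonpositive on the whole Mori cone, contradicting that $X$ is a Mori dream space (Theorem \ref{dream}) with $D$ a nonzero effective divisor that must be positive on some curve class. Concretely, $\overline{\NE}(X)$ is spanned by finitely many extremal rays (Theorem \ref{cone}); if $(D\cdot R')\le 0$ for all of them, then $(D\cdot \gamma)\le 0$ for every $\gamma\in\overline{\NE}(X)$, impossible for a nonzero effective divisor on a variety where curves move enough (use rational connectedness, Theorem \ref{rat_conn}, to find a curve meeting $D$ properly). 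So fix such an $R$ with contraction $\pi\colon X\to Y$ and a minimal rational curve $C$ of $R$.

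**$K_X$-negativity and the length bound.** Since $-(K_X+D)$ is ample and $(D\cdot C)>0$, we get $(-K_X\cdot C)=(-(K_X+D)\cdot C)+(D\cdot C)>(-(K_X+D)\cdot C)\ge\iota>0$, so $R$ is $K_X$-negative and in fact $l(R)=(-K_X\cdot C)\ge\iota+1$ (both $(-(K_X+D)\cdot C)\ge\iota$ and $(D\cdot C)\ge 1$ are integers). This gives all three of: $R$ is $K_X$-negative, $l(R)\ge\iota+1$, and the two refined statements will follow by tracking when these inequalities are equalities.

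**The fiber-space property.** For an irreducible component $D_1\subset D$, I want $(\pi|_{D_1})_*\sO_{D_1}=\sO_{\pi(D_1)}$. The key input is Wi\'sniewski's inequality (Theorem \ref{wisn_ineq}): for any nontrivial fiber $F$ of $\pi$, $\dim\Exc(\pi)+\dim F\ge n+l(R)-1\ge n+\iota$. Since $(D_1\cdot R)>0$ (at least for the component(s) meeting $C$; one handles a general component by noting $D$ connected and using that $\pi$ contracts $R$), $D_1$ meets every nontrivial fiber, so $\dim(F\cap D_1)\ge \dim F-1$, forcing the fibers of $\pi|_{D_1}$ to be positive-dimensional and connected — here I would invoke that $D_1$ is itself (a component of a) log Fano manifold, hence has $h^1(\sO)=0$ and more importantly that $\pi|_{D_1}$ is a morphism with connected fibers once we Stein-factorize and rule out a finite part: if $\pi|_{D_1}\colon D_1\to\pi(D_1)$ had a nontrivial Stein factorization, the general fiber of $\pi|_{D_1}$ would be disconnected, but it lies in a fiber of $\pi$ which is a chain/union governed by the extremal contraction and $R$ being a single ray forces connectedness. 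So $(\pi|_{D_1})_*\sO_{D_1}=\sO_{\pi(D_1)}$.

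**The refined cases.** For \eqref{longray1}: if $l(R)=\iota+1$, then since $l(R)=(-(K_X+D)\cdot C)+(D\cdot C)$ with $(-(K_X+D)\cdot C)\ge\iota$ and $(D\cdot C)\ge1$, equality forces $(D\cdot C)=1$ (and $(-(K_X+D)\cdot C)=\iota$). For \eqref{longray2}: if $l(R)=r+2$ and $r\ge2$, write $-(K_X+D)\sim rL$, so $(-(K_X+D)\cdot C)=r(L\cdot C)$ with $L\cdot C\ge1$ (as $(L\cdot C)>0$: $L$ is $\tfrac1r$ of an ample class, and $C$ generates a ray on which that class is positive — actually $(L\cdot C)\ge1$ since $-(K_X+D)$ is ample so positive on $C$, hence $r(L\cdot C)\ge r\cdot 1$, and $L$ Cartier makes $(L\cdot C)$ a positive integer). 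Then $r+2=l(R)=r(L\cdot C)+(D\cdot C)$. If $(L\cdot C)\ge2$ then $r(L\cdot C)\ge 2r\ge r+2$ (using $r\ge2$), forcing $(D\cdot C)\le0$, a contradiction; so $(L\cdot C)=1$ and then $(D\cdot C)=2$.

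**Main obstacle.** The subtle point is the fiber-space property $(\pi|_{D_1})_*\sO_{D_1}=\sO_{\pi(D_1)}$ for \emph{every} irreducible component, not just those through $C$ — I expect to need the connectedness of $D$ together with a careful argument that the restriction of an extremal contraction to a divisor meeting the relevant fibers cannot acquire a finite part after Stein factorization, which is where Wi\'sniewski's inequality does the real work by forcing fibers to be large enough. Everything else is bookkeeping with the integrality of intersection numbers and the ampleness of $-(K_X+D)$.
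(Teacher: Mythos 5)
Most of your proposal tracks the paper: the existence of $R$ with $(D\cdot R)>0$ from finite generation of $\NE(X)$ plus the fact that a nonzero effective divisor is positive on some curve, the computation $l(R)=(-(K_X+D)\cdot C)+(D\cdot C)\geq\iota+1$ giving $K_X$-negativity and the length bound, and the integrality bookkeeping in cases (1) and (2) are all exactly the paper's arguments and are correct.

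The genuine gap is the fiber-space property, and you correctly flag it as the main obstacle but your proposed resolution does not work. First, your argument only engages components $D_1$ meeting the nontrivial fibers via $(D_1\cdot R)>0$, whereas the claim is for \emph{every} irreducible component of $D$, including those with $(D_1\cdot R)\leq 0$; the phrase ``$R$ being a single ray forces connectedness'' is not an argument that $F\cap D_1$ is connected. Second, and more seriously, connectedness of the fibers of $\pi|_{D_1}$ is not even sufficient: $(\pi|_{D_1})_*\sO_{D_1}=\sO_{\pi(D_1)}$ says the Stein factorization $D_1\to Z\to\pi(D_1)$ is trivial, and since $D_1$ is smooth, $Z$ is the normalization of $\pi(D_1)$ when the fibers are connected; so the statement also asserts normality of the image, which no fiber-connectedness argument can deliver. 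The paper's proof avoids all of this with one vanishing: push forward $0\to\sO_X(-D_1)\to\sO_X\to\sO_{D_1}\to 0$ to get
\[
\sO_Y=\pi_*\sO_X\rightarrow(\pi|_{D_1})_*\sO_{D_1}\rightarrow R^1\pi_*\sO_X(-D_1),
\]
and observe that $-D_1\sim K_X+(D-D_1)+(-(K_X+D))$ with $(X,D-D_1)$ log canonical and $-(K_X+D)$ ample, so $R^1\pi_*\sO_X(-D_1)=0$ by the relative vanishing theorem for lc pairs (Ambro--Fujino type). The resulting surjection $\sO_Y\twoheadrightarrow(\pi|_{D_1})_*\sO_{D_1}$ factors through the injection $\sO_{\pi(D_1)}\hookrightarrow(\pi|_{D_1})_*\sO_{D_1}$, forcing equality, uniformly in $D_1$. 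Without this (or an equivalent) input, your proof of that clause is incomplete.
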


\begin{proof}
The existence of such extremal ray is obvious, since $D$ is a nonzero effective divisor 
and $\NE(X)$ is spanned by finite number of extremal rays. 
Let $R\subset\NE(X)$ be an extremal ray such that $(D\cdot R)>0$. 
Then $R$ is $K_X$-negative since $(-K_X\cdot R)=(-(K_X+D)\cdot R)+(D\cdot R)>0$. 

To see that $\pi|_{D_1}\colon D_1\rightarrow \pi(D_1)$ is an algebraic fiber space, 
it is enough to show that the homomorphism 
$\pi_*\sO_X\rightarrow(\pi|_{D_1})_*\sO_{D_1}$ is surjective. 
We know that the sequence 
\[
\pi_*\sO_X\rightarrow(\pi|_{D_1})_*\sO_{D_1}\rightarrow R^1\pi_*\sO_X(-D_1)
\]
is exact and $R^1\pi_*\sO_X(-D_1)=0$ by a vanishing theorem 
(see for example \cite[Theorem 2.42]{fujino}). Hence 
$\pi|_{D_1}\colon D_1\rightarrow \pi(D_1)$ is an algebraic fiber space for any 
irreducible component $D_1\subset D$. 

Let $C\subset X$ be a minimal rational curve of $R$. Then we have 
\[
l(R)=(-K_X\cdot C)=(-(K_X+D)\cdot C)+(D\cdot C)\geq\iota+1.
\]
If $l(R)=\iota+1$, then the above inequality is exactly equal. Hence $(D\cdot C)=1$ 
holds. 

If $l(R)=r+2$ and $r\geq 2$, then 
\[
r+2=l(R)=r(L\cdot C)+(D\cdot C)\geq r+1. 
\]
Therefore $(L\cdot C)=1$ and $(D\cdot C)=2$ holds.
\end{proof}

Using Lemma \ref{longray}, we can show a delicate structure properties 
for certain log Fano manifolds. 

\begin{proposition}\label{structure_cor}
Let $(X,D)$ be a log Fano manifold of the log Fano index $r$, log Fano pseudoindex 
$\iota$ and assume that $D\neq 0$. 
Pick an arbitrary extremal ray $R\subset\NE(X)$ 
such that $(D\cdot R)>0$ and let $\pi\colon X\rightarrow Y$ be 
the contraction morphism associated to $R$. 
Let $F$ be an arbitrary nontrivial fiber of $\pi$. 
Then $\dim(D\cap F)\geq\iota-1$ holds. 
Furthermore, we have the following results. 
\begin{enumerate}
\renewcommand{\theenumi}{\roman{enumi}}
\renewcommand{\labelenumi}{\rm{(\theenumi)}}
\item\label{structure_cor1}
If $\dim(D\cap F)=\iota-1$ for any nontrivial fiber $F$, then 
$\pi\colon X\rightarrow Y$ is a 
$\pr^\iota$-bundle and $\pi|_D\colon D\rightarrow Y$ is a $\pr^{\iota-1}$-subbundle.
\item\label{structure_cor2}
If $r\geq 2$ and there exists an irreducible component $D_1$ of $D$ such that 
$\dim(D_1\cap F)=r$ for any $F$, then 
one of the following holds. 
\begin{enumerate}
\renewcommand{\theenumii}{\alph{enumii}}
\renewcommand{\labelenumii}{\rm{(\theenumii)}}
\item\label{structure_cor2a}
$Y$ is a smooth projective variety and $\pi$ is the blowing up 
along an irreducible smooth projective subvariety $W\subset Y$ 
of codimension $r+2$.
\item\label{structure_cor2b}
$Y$ is smooth, $\pi\colon X\rightarrow Y$ is a $\sQ^{r+1}$-bundle and 
$\pi|_{D_1}\colon D_1\rightarrow Y$ is a $\sQ^r$-subbundle $($cf. Definition \ref{Qq}$)$. 
\item\label{structure_cor2c}
$\pi\colon X\rightarrow Y$ is a 
$\pr^{r+1}$-bundle and $\pi|_{D_1}\colon D_1\rightarrow Y$ is a $\pr^r$-subbundle.
\item\label{structure_cor2d}
$\pi_*\sO_X(L)$ is locally free of rank $r+2$, where $L$ is a divisor on $X$ 
such that $-(K_X+D)\sim rL$. Furthermore, 
$\pi\colon X\rightarrow Y$ is isomorphic to the projection 
$p\colon\pr_Y(\pi_*L)\rightarrow Y$
and $(\pi|_{D_1})^{-1}(y)$ is a hyperquadric section under the 
isomorphism $\pi^{-1}(y)\simeq\pr^{r+1}$ for any closed point $y\in Y$. 
Moreover, $\pi_*\sO_X(L)\simeq(p|_{D_1})_*(\sO_{\pr}(1)|_{D_1})$ under the isomorphism. 
\end{enumerate}
\end{enumerate}
\end{proposition}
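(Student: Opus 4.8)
The plan is to exploit Lemma \ref{longray}, which tells us that the ray $R$ is $K_X$-negative with $l(R)\geq\iota+1$, and then run Wi\'sniewski's inequality (Theorem \ref{wisn_ineq}) against the fibers of $\pi$. First I would establish the basic dimension bound $\dim(D\cap F)\geq\iota-1$: pick a nontrivial fiber $F$; since $(D\cdot R)>0$ we have $D\cap F\neq\emptyset$, and Theorem \ref{wisn_ineq} gives $\dim\Exc(\pi)+\dim F\geq n+l(R)-1$, so each nontrivial fiber has $\dim F\geq l(R)-1\geq\iota$ (because $\Exc(\pi)$ has dimension at most $n$ for fiber type, and for birational type one uses that $D$, being $\pi$-ample, meets $F$ in a divisor of $F$). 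Combined with $D\cap F$ being a nonempty Cartier divisor on $F$ — or at least a proper closed subset cut out by the $\pi$-ample $D$ — this yields $\dim(D\cap F)\geq\dim F-1\geq\iota-1$, with equality forcing $\dim F=\iota$.

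For part \eqref{structure_cor1}, assuming $\dim(D\cap F)=\iota-1$ for every nontrivial fiber, the above shows every nontrivial fiber has dimension exactly $\iota$, so $\pi$ is of fiber type (a birational fiber exceptional locus would have dimension $\geq\iota+1$ against $D$ meeting it properly — here the length hypothesis $l(R)=\iota+1$ from Lemma \ref{longray}\eqref{longray1} gives $(D\cdot C)=1$, pinning things down). Then I would invoke the standard characterization of projective bundles: a $K_X$-negative fiber type contraction all of whose fibers are $\pr^\iota$ with $(-K_X\cdot C)=l(R)=\iota+1$ on the minimal curves is a $\pr^\iota$-bundle over a smooth base (this is the equidimensional case of the Fujita–Ionescu–Wi\'sniewski circle of results; alternatively one checks $F\simeq\pr^\iota$ from $\dim F=\iota$, $-K_X|_F$ of the right degree, and applies Fujita's theorem fiberwise plus flatness). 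That $\pi|_D$ is a $\pr^{\iota-1}$-subbundle then follows because $D\cap F$ is a hyperplane in each $\pr^\iota$ fiber (degree one since $(D\cdot C)=1$) varying algebraically, so Lemma \ref{lemP} applies.

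For part \eqref{structure_cor2}, the hypothesis $r\geq 2$ and the existence of a component $D_1$ with $\dim(D_1\cap F)=r$ for all $F$ means $\dim F\in\{r+1,r+2\}$; combined with $l(R)\geq\iota+1\geq r+1$ and Wi\'sniewski, I would separate cases by $l(R)$. If $l(R)\geq r+3$ then every fiber has dimension $\geq r+2$ and meets $D_1$ in dimension $\geq r+1$, contradicting $\dim(D_1\cap F)=r$; so $l(R)\in\{r+1,r+2\}$. When $l(R)=r+1=\iota+1$, Lemma \ref{longray}\eqref{longray1} gives $(D\cdot C)=1$, hence $(D_1\cdot C)=1$ and $D\cap F$ is a hyperplane-degree divisor — I would then argue the generic fiber is $\pr^{r+1}$ or a quadric $\sQ^{r+1}$ (fibers of dimension $r+1$ with $-K_X$ of index $r+1$ or $r+2$ relative to a line), landing in cases \eqref{structure_cor2b}, \eqref{structure_cor2c}; or if some fiber has dimension $r+2$ we are in the birational situation, a smooth blow-up along a codimension $r+2$ center, case \eqref{structure_cor2a}. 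When $l(R)=r+2$ with $r\geq 2$, Lemma \ref{longray}\eqref{longray2} gives $(L\cdot C)=1$, $(D\cdot C)=2$; here every fiber has dimension $r+1$, each fiber is $\pr^{r+1}$ (Fujita, since $-K_X|_F$ has the maximal index $r+2$ on a variety of dimension $r+1$ with a line of anticanonical degree $r+2$), $D_1\cap F$ is a quadric, and applying cohomology-and-base-change to $\sO_X(L)$ along $\pi$ — $h^0(\pr^{r+1},\sO(1))=r+2$, $h^{>0}=0$ — shows $\pi_*\sO_X(L)$ is locally free of rank $r+2$ with $X\simeq\pr_Y(\pi_*L)$ and $\sO_{\pr}(1)$ the tautological sheaf; restricting to $D_1$ identifies $\pi_*L$ with $(p|_{D_1})_*(\sO_{\pr}(1)|_{D_1})$, giving \eqref{structure_cor2d}.

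The main obstacle I anticipate is the careful bookkeeping that separates the birational case \eqref{structure_cor2a} from the fiber-type cases and, within fiber type, distinguishes the quadric bundle \eqref{structure_cor2b} from the projective bundle \eqref{structure_cor2c} and the relative-quadric-in-$\pr^{r+1}$ picture \eqref{structure_cor2d}: one must control the scheme structure of every fiber (not just the generic one), rule out jumping phenomena, and verify smoothness of the base $Y$ in each case. This is exactly where Lemmas \ref{lemP} and \ref{lemQ} are designed to be applied, and where the precise intersection numbers $(D\cdot C)$, $(L\cdot C)$ extracted from Lemma \ref{longray} do the essential work; the remaining arguments are the standard flatness-plus-cohomology-and-base-change manipulations.
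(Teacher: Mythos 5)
Your overall strategy --- combine Lemma \ref{longray} with Wi\'sniewski's inequality to get the chain $\dim(D\cap F)\geq\dim F-1\geq l(R)-2\geq\iota-1$, then read off part \eqref{structure_cor1} from the equality case and invoke Fujita's $\pr$-bundle criterion --- is exactly the paper's, and that portion of your sketch is fine. The problems are concentrated in part \eqref{structure_cor2}, where your case ledger does not match what the cited results actually deliver.

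First, you never verify that $(D_1\cdot R)>0$. This is the paper's opening move in case \eqref{structure_cor2}: if $(D_1\cdot R)\leq 0$ then every nontrivial fiber would be contained in $D_1$, forcing $\dim F=r$, and Wi\'sniewski's inequality would then force $\pi$ to be of fiber type, contradicting $X\subset D_1$. Without this step you cannot conclude that $D_1\cap F$ is a proper divisor in $F$, hence that $\dim F=r+1$ for \emph{every} nontrivial fiber; your assertion that $\dim F\in\{r+1,r+2\}$ is wrong (a fiber of dimension $r+2$ would meet the Cartier divisor $D_1$ in dimension $\geq r+1$, contradicting the hypothesis), and consequently your criterion for landing in the blow-up case \eqref{structure_cor2a} --- ``some fiber has dimension $r+2$'' --- is not the right dichotomy. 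The correct one is $\dim\Exc(\pi)=\dim X-1$ versus $\dim\Exc(\pi)=\dim X$, with all nontrivial fibers of dimension $r+1$ in both situations.

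Second, your assignment of outcomes to the values of $l(R)$ is incorrect. A $\pr^{r+1}$-bundle has lines in its fibers of anticanonical degree $r+2$, so case \eqref{structure_cor2c} can only occur when $l(R)=r+2$; it cannot ``land'' under $l(R)=r+1$ as you claim (indeed, for a general fiber $F$ of a fiber-type contraction one has $K_F=K_X|_F$, so $F\simeq\pr^{r+1}$ is incompatible with $l(R)=r+1$, and the fiber-type $l(R)=r+1$ branch yields only the quadric bundle \eqref{structure_cor2b}, via Andreatta--Ballico--Wi\'sniewski). Conversely, in the branch $l(R)=r+2$ you pass from $(D\cdot C)=2$ directly to ``$D_1\cap F$ is a quadric,'' which tacitly assumes $(D_1\cdot C)=2$; but $D$ may have a second component meeting $C$, so $(D_1\cdot C)=1$ is equally possible and produces the $\pr^r$-subbundle of case \eqref{structure_cor2c}. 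Your list of conclusions happens to coincide with the statement only because the misplaced case \eqref{structure_cor2c} and the omitted subcase cancel; a proof written along your lines would not close. The fix is the paper's: first $(D_1\cdot R)>0$, then split on $\dim\Exc(\pi)$, and within fiber type split on $l(R)\in\{r+1,r+2\}$ and, when $l(R)=r+2$, further on $(D_1\cdot C)\in\{1,2\}$ using Lemma \ref{longray} \eqref{longray2}. The concluding base-change computation you give for case \eqref{structure_cor2d} is correct and is the one in the paper.
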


\begin{proof}
Let $L$ be an ample divisor on $X$ such that $-(K_X+D)\sim rL$ and $C$ be a 
minimal rational curve of $R$. 
We note that $D$ and $F$ intersect with each other 
since $(D\cdot R)>0$. Hence 
\begin{equation}\label{long_ineq}
\dim(D\cap F)\geq\dim F-1\geq \dim X-\dim\Exc(\pi)+l(R)-2\\
\geq l(R)-2\geq\iota-1\geq r-1
\end{equation}
by Wi\'sniewski's inequality (Theorem \ref{wisn_ineq}) and by Lemma \ref{longray}. 

First, we consider the case \eqref{structure_cor1}. Then $\dim\Exc(\pi)=\dim X$ and $l(R)=\iota+1$. 
Hence $\pi$ is of fiber type, 
all fibers of $\pi$ are of dimension $\iota$, and the equalities 
$(D\cdot C)=1$ and $(-K_X\cdot C)=\iota+1$ hold by Lemma \ref{longray}. 
Therefore $\pi\colon X\rightarrow Y$ is a $\pr^\iota$-bundle and 
$\pi|_D\colon D\rightarrow Y$ is a $\pr^{\iota-1}$-subbundle by 
\cite[Lemma 2.12]{fujita}.

Next, we consider the case \eqref{structure_cor2}. 
We first show that $(D_1\cdot R)>0$. If not, any nontrivial fiber $F$ is included 
in $D_1$ (in particular, $\pi$ is of birational type). 
Then Wi\'sniewski's inequality (Theorem \ref{wisn_ineq}) 
and Lemma \ref{longray} shows that 
\[
\dim\Exc(\pi)+r\geq\dim X+l(R)-1\geq\dim X+\iota. 
\]
Hence $\pi$ is of fiber type, this leads to a contradiction. 
Consequently, we have $(D_1\cdot R)>0$. 

We first assume that $\dim\Exc(\pi)<\dim X$. Then 
$\dim\Exc(\pi)=\dim X-1$ and $l(R)=r+1$ by substituting $D_1$ for (\ref{long_ineq}).
Hence $\pi$ is a divisorial contraction 
such that $\dim F=r+1$ for any $F$, 
and the equality $(D\cdot C)=1$ holds by Lemma \ref{longray} \eqref{longray1}. 
Thus, $Y$ is a smooth projective variety and $\pi$ is the blowing up 
whose center $W\subset Y$ is a smooth projective subvariety of 
codimension $r+2$ 
by \cite[Theorem 4.1 (iii)]{AW}. Therefore the condition \eqref{structure_cor2a} is 
satisfied in the case $\dim\Exc(\pi)<\dim X$. 

We second consider the case where $\dim\Exc(\pi)=\dim X$, that is, 
$\pi$ is of fiber type. We note that $l(R)=r+1$ or $r+2$ by (\ref{long_ineq}). 

We consider the case where $\pi$ is of fiber type and $l(R)=r+1$. 
Then $\dim F=r+1$ for any fiber and 
the equalities $(D_1\cdot C)=1$ 
and $(-K_X\cdot C)=r+1$ hold by (\ref{long_ineq}) and Lemma \ref{longray} 
\eqref{longray1}. 
Thus $\pi_*\sO_X(D_1)$ is locally free of rank $r+3$ and $X$ is embedded over $Y$ 
into $\pr_Y(\pi_*\sO_X(D_1))$
as a divisor of relative degree $2$ by \cite[Theorem B]{ABW}. 
Therefore the condition \eqref{structure_cor2b} satisfied in the case 
$\dim\Exc(\pi)=\dim X$ and $l(R)=r+1$. 

We consider the case where $\pi$ is of fiber type and $l(R)=r+2$. 
Then $(L\cdot C)=1$ and either $(D_1\cdot C)=1$ or $2$ holds 
by Lemma \ref{longray}. Thus $\pi\colon X\rightarrow Y$ is isomorphic to the 
$\pr^{r+1}$-bundle $\pr_Y(\pi_*\sO_X(L))$ by \cite[Lemma 2.12]{fujita}. 
If $(D_1\cdot C)=1$, then $\pi|_{D_1}\colon D_1\rightarrow Y$ is a $\pr^r$-subbundle. 
Therefore the condition \eqref{structure_cor2c} satisfied in the case 
$\dim\Exc(\pi)=\dim X$, $l(R)=r+2$ and  $(D_1\cdot C)=1$.  

Finally, we consider the remained case where 
$\pi$ is of fiber type, $l(R)=r+2$ and $(D_1\cdot C)=2$. 
Under the isomorphism 
$X\simeq\pr_Y(\pi_*\sO_X(L))$, we have a natural exact sequence 
\[
0\rightarrow\sO_{\pr}(1)(-D_1)\rightarrow\sO_{\pr}(1)
\rightarrow\sO_{\pr}(1)|_{D_1}\rightarrow 0,
\]
where $\sO_\pr(1)$ is the tautological invertible sheaf on $\pr_Y(\pi_*\sO_X(L))$. 
After taking $p_*$, we also obtain an exact sequence 
\[
0\rightarrow 0\rightarrow\pi_*\sO_X(L)
\rightarrow(p|_{D_1})_*(\sO_{\pr}(1)|_{D_1})\rightarrow 0
\]
by cohomology and base change theorem, since $h^i(\pr^{r+1}, \sO(-1))=0$ holds for $i=0,1$. 
Therefore the condition \eqref{structure_cor2d} satisfied in the case 
$\dim\Exc(\pi)=\dim X$, $l(R)=r+2$ and  $(D_1\cdot C)=2$.  
\end{proof}


\subsection{Properties on scrolls}\label{property_scroll}


In Section \ref{property_scroll}, we consider special toric varieties which are 
the projective space bundles of which splits into invertible sheaves 
over projective spaces, so called the Hirzebruch-Kleinschmidt varieties. 
During Section \ref{property_scroll}, we fix the notation. 

\begin{notation}\label{scroll_not}
Let $s$, $t$ be positive integers and $a_0,\dots,a_t$ be integers with 
$0=a_0\leq a_1\leq\dots\leq a_t$. 
Let 
\[
X:=\pr[\pr^s; a_0,\dots,a_t],
\]
that is, 
\[
X=\pr_{\pr^s}(\sO(a_0)\oplus\dots\oplus\sO(a_t)).
\]
We also let 
\[
D_i:=\pr[\pr^s; a_0,\dots,a_{i-1},a_{i+1},\dots,a_t]\cansub X=\pr[\pr^s; a_0,\dots,a_t],
\]
that is, the embedding is obtained by the canonical projection, for any $0\leq i\leq t$. 
(See Notation and terminology in Section \ref{introsection}.) 
\end{notation}

\begin{lemma}\label{scroll_lem}
We have the following properties. 
\begin{enumerate}
\renewcommand{\theenumi}{\arabic{enumi}}
\renewcommand{\labelenumi}{$(\theenumi)$}
\item\label{scroll_lem1}
$\Pic(X)=\Z[\sO(1; 0)]\oplus\Z[\sO(0; 1)]$.
\item\label{scroll_lem2}
$\sO_X(-K_X)\simeq\sO(s+1-\sum_{i=1}^ta_i;\,\, t+1)$. 
\item\label{scroll_lem3}
$D_i\in|\sO(-a_i; 1)|$ for any $0\leq i\leq t$.
\item\label{scroll_lem35}
$\deg_{C_f}(\sO(u; v)|_{C_f})=v$ and $\deg_{C_h}(\sO(u; v)|_{C_h})=u$, 
where $C_f$ is a line in a fiber of $X\rightarrow\pr^s$ and 
$C_h$ is a line in $\pr[\pr^s; a_0]\cansub X=\pr[\pr^s; a_0,\dots,a_t].$
\item\label{scroll_lem4}
$\Nef(X)=\R_{\geq 0}[\sO(1; 0)]+\R_{\geq 0}[\sO(0; 1)]$ and 
$\Eff(X)=\R_{\geq 0}[\sO(1; 0)]+\R_{\geq 0}[\sO(-a_t; 1)].$
\item\label{scroll_lem5}
For a divisor $D=\sum_{i=1}^tc_iD_i+dH$ with $c_i, d\in\Z$, where $H$ is the 
pullback of a hyperplane in $\pr^s$, the value $h^0(X, \sO_X(D))$ is exactly 
the number of the elements of the set 
\begin{eqnarray*}
\Biggl\{(P_1,\dots,P_s,Q_1,\dots,Q_t)\in\Z^{\oplus s+t}\,\Bigg| \,
-\sum_{j=1}^tQ_j\geq 0,\,\, Q_i\geq-c_i\ (1\leq i\leq t),\\
-\sum_{i=1}^sP_i+\sum_{j=1}^ta_jQ_j\geq -d,\,\, P_1,\dots,P_s\geq 0\Biggr\}.
\end{eqnarray*}
\item\label{scroll_lem6}
If there exists an effective divisor $D$ on $X$ with 
$D\in|\sO(k; 1)|$ such that $k<-a_{t-1}$ holds, 
then $D$ always contains $D_t$ as an irreducible component. 
\item\label{scroll_lem7}
If a member $D\in|\sO(k; 2)|$ 
is reduced, then $k\geq -a_t-a_{t-1}$. 
\item\label{scroll_lem8}
Assume that $a_{t-2}<a_t$. Then any effective and reduced 
divisor $D$ on $X$ with 
$D\in|\sO(-a_t-a_{t-1}; 2)|$ is decomposed into two 
irreducible components 
$D^t$ and $D^{t-1}$ such that $D^t\sim D_t$ and $D^{t-1}\sim D_{t-1}$: 
here $D^t=D_t$ if $a_{t-1}<a_t$. 
Furthermore, after taking an automorphism of $X$ over $\pr^s$, 
$D^t$ and $D^{t-1}$ can move $D_t$ and $D_{t-1}$, respectively.
\end{enumerate}
\end{lemma}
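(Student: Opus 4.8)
The plan is to prove the statements in order, since the later ones rely on the earlier. Parts \eqref{scroll_lem1}, \eqref{scroll_lem2}, \eqref{scroll_lem3}, \eqref{scroll_lem35} are standard facts about projective bundles over $\pr^s$: the Picard group is generated by the tautological class $\sO(0;1)$ and the pullback of a hyperplane $\sO(1;0)$; the relative Euler sequence gives $\omega_{X/\pr^s}\simeq\sO(-\sum a_i; -(t+1))$ and combining with $\omega_{\pr^s}=\sO(-s-1)$ yields \eqref{scroll_lem2}; the section $D_i$ corresponding to the quotient $\bigoplus_j\sO(a_j)\twoheadrightarrow\bigoplus_{j\neq i}\sO(a_j)$ sits in $|\sO(-a_i;1)|$ by Lemma \ref{lemP} (or a direct computation with the tautological quotient). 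Part \eqref{scroll_lem35} is immediate by restricting $\sO(u;v)$ to the two types of lines.

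For \eqref{scroll_lem4}, I would use that $X$ is a smooth projective toric variety, so $\Nef(X)$ and $\Eff(X)$ are rational polyhedral. The nef cone is cut out by the two contractions of $X$: the bundle projection $X\to\pr^s$ (whose fiber line pairs to $0$ with $\sO(0;1)$, giving the extremal ray $\R_{\geq 0}[\sO(1;0)]$, using $a_0=0$ so that $\sO(0;1)$ is nef and base-point free) and the map contracting $\sO(0;1)$; for the effective cone, $\sO(1;0)$ is a boundary ray (pullback of a hyperplane, not big), and $D_t\in|\sO(-a_t;1)|$ gives the other boundary ray, with no effective divisor outside this cone because $D_t$ is the unique section of smallest tautological twist. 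Part \eqref{scroll_lem5} is the combinatorial description of $H^0$ of a torus-invariant divisor: writing $D=\sum c_iD_i+dH$ and pushing forward $\sO_X(D)$ to $\pr^s$ gives $\bigoplus$ of line bundles on $\pr^s$ indexed by the monomials in the $D_i$-directions; counting lattice points of the corresponding polytope gives exactly the stated set, where $P_i$ index monomials on $\pr^s$ and $Q_j$ record the degrees in the fiber coordinates. Alternatively one computes $H^0(X,\sO_X(D))=\bigoplus H^0(\pr^s, \mathrm{Sym}^\bullet$ of the $\sO(a_j)$'s twisted by $\sO(d)$) and reads off the dimension.

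Parts \eqref{scroll_lem6}, \eqref{scroll_lem7}, \eqref{scroll_lem8} are the substantive ones and should follow from \eqref{scroll_lem5}. For \eqref{scroll_lem6}: if $D\in|\sO(k;1)|$ with $k<-a_{t-1}$, I would subtract $D_t$ and show $D-D_t\in|\sO(k+a_t;0)|$ is still effective; since any effective divisor in $|\sO(k';0)|$ is $k'$ times a hyperplane pullback (needs $k'\geq 0$), and one checks $k+a_t\geq 0$ is forced because otherwise $h^0(X,\sO(k;1))=0$ by \eqref{scroll_lem5} — so $D_t$ must split off. For \eqref{scroll_lem7}: a reduced $D\in|\sO(k;2)|$ has $h^0>0$, and \eqref{scroll_lem5} with the constraint that $D$ not contain $D_t$ (or $2D_t$) with multiplicity forces $k\geq -a_t-a_{t-1}$; concretely, the largest two tautological twists available in a degree-$2$ relative form are $a_t+a_{t-1}$. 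For \eqref{scroll_lem8}, the extremal case $k=-a_t-a_{t-1}$: by \eqref{scroll_lem5} the space $H^0(X,\sO(-a_t-a_{t-1};2))$ is spanned by the single relative monomial "$\xi_t\xi_{t-1}$" possibly plus lower terms when $a_{t-1}=a_{t-2}$, so a reduced member must factor as $D^t\cup D^{t-1}$ with $D^t\sim D_t$, $D^{t-1}\sim D_{t-1}$; when $a_{t-1}<a_t$ the section with twist $a_t$ is unique up to scalar so $D^t=D_t$, and the residual freedom in choosing $D^{t-1}$ (and $D^t$ when $a_{t-1}=a_t$) is exactly the action of $\Aut(X/\pr^s)$, which acts transitively on such sections. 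The main obstacle will be bookkeeping the monomial basis from \eqref{scroll_lem5} carefully enough in \eqref{scroll_lem8} to pin down the components and the automorphism action, especially tracking the boundary cases $a_{t-1}=a_t$ and $a_{t-2}=a_{t-1}$; everything else is routine toric or projective-bundle computation.
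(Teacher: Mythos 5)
Your proposal is correct and follows essentially the same route as the paper: parts (1)--(5) are dispatched as standard projective-bundle/toric facts, and (6)--(8) are proved by analyzing which monomials of the given bidegree exist in the $\Z^{\oplus 2}$-graded total coordinate ring $\Bbbk[x_0,\dots,x_s,y_0,\dots,y_t]$ (your lattice-point set from (5) is exactly this monomial bookkeeping), forcing divisibility of the defining section by $y_t$ or $y_t^2$ and reducing the automorphism claim in (8) to linear algebra on $\sO(a_{t-1})\oplus\sO(a_t)$. The only cosmetic point is that in (6) the argument should be phrased as ``every monomial of degree $(k;1)$ is divisible by $y_t$ since $k+a_j<0$ for $j\leq t-1$'' rather than ``subtract $D_t$ and show the difference is effective,'' which presupposes the conclusion.
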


\begin{proof}
Since \eqref{scroll_lem1}--\eqref{scroll_lem5} are well-known, 
we shall prove \eqref{scroll_lem6}--\eqref{scroll_lem8}. 
We note that the total coordinate ring of $X$ is 
the $\Z^{\oplus 2}$-graded polynomial ring 
\[
\Bbbk[x_0, \dots, x_s, y_0,\dots,y_t]
\]
with the grading
\[
\deg x_i=(1, 0),\quad\deg y_j=(-a_j, 1).
\]
First, we consider \eqref{scroll_lem6}. A defining equation of $D$ 
is expressed as a linear combination of the monomials 
\[
y_t\prod_{i=0}^sx_i^{m_i}
\]
with $\sum_{i=0}^sm_i=k+a_t$, since $k<a_{t-1}$. 
Thus $D$ contains a component of the subvariety 
defined by $y_t=0$, which is nothing but $D_t$. 

Next, we consider \eqref{scroll_lem7}. 
If $k<-a_t-a_{t-1}$, then a defining equation of $D$ 
is expressed as a linear combination of the monomials 
\[
y_t^2\prod_{i=0}^sx_i^{m_i}
\]
with $\sum_{i=0}^sm_i=k+2a_t$. 
Therefore $D$ must be nonreduced since $D-2D_t$ is an effective divisor. 

Finally, we consider \eqref{scroll_lem8}. 
If $a_{t-1}<a_t$, then a defining equation of $D$ 
is expressed as a linear combination of  the monomials 
\[
y_t^2\prod_{i=0}^sx_i^{m_i}
\]
with $\sum_{i=0}^sm_i=a_t-a_{t-1}$ and
\[
y_{t-j}y_t
\]
with $1\leq j\leq l$, where $l$ is the number of the elements of the set 
$\{1\leq j\leq t-1\mid a_j=a_{t-1}\}$. 
Then $D$ has a component of the subvariety 
defined by $y_t=0$, which is nothing but $D_t$. We can also show that $D^{t-1}:=D-D_t$ is 
irreducible if $D$ is reduced and $D^{t-1}\sim D_{t-1}$. 
If $a_{t-2}<a_{t-1}=a_t$, a defining equation of $D$ is expressed as 
a linear combination of monomials 
\[
y_{t-1}^2,\quad y_{t-1}y_t\quad\text{and}\quad y_t^2.
\]
Hence $D$ has exactly two irreducible components $D^t$ and $D^{t-1}$ 
such that $D^t\sim D^{t-1}\sim D_t(\sim D_{t-1})$ since $D$ is reduced. 
The existence of an automorphism $\alpha$ of $X$ over $p\colon X\rightarrow\pr^s$ 
such that $\alpha(D^t)=D_t$ and $\alpha(D^{t-1})=D_{t-1}$ 
is easy in either case by a simple linear-algebraic argument. 
\end{proof}

\begin{corollary}\label{scroll_cor1}
Let $D$ be a member of 
$D\in|\sO(c; d)|$ for some $d>0$. 
Assume that 
$(X, D)$ is a log Fano manifold such that 
the log Fano index is $r$ and the log Fano pseudoindex is $\iota$. 
\begin{enumerate}
\renewcommand{\theenumi}{\arabic{enumi}}
\renewcommand{\labelenumi}{$(\theenumi)$}
\item\label{scroll_cor11}
If $\iota\geq t$, then $d=1$, $t=\iota$ and $s\geq\iota-1$ holds. 
Furthermore, if $s=\iota-1$, then $a_1=\cdots=a_{\iota-1}=0$ and $c=-a_\iota$.
\item\label{scroll_cor12}
If $r\geq t$ $($hence $\iota\geq t$ holds$)$, $s=r$ and $r\geq 2$, then we have 
$r=\iota$ and either $(a_1,\dots,a_{r-2},a_{r-1},c)=(0,\dots,0,0,1-a_r)$ 
or $(0,\dots,0,1,-a_r)$ holds. 
\end{enumerate}
\end{corollary}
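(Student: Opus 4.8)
The plan is to compute the canonical class and the boundary class in $\Pic(X) = \Z[\sO(1;0)] \oplus \Z[\sO(0;1)]$ using Lemma \ref{scroll_lem}, translate the hypotheses ``$\iota \geq t$'' (resp.\ ``$r \geq t$, $s = r$'') into numerical inequalities, and then pin down the values of the $a_i$ and of $c$ by examining curves of low degree and by invoking the reducedness/effectivity constraints of Lemma \ref{scroll_lem} \eqref{scroll_lem6}--\eqref{scroll_lem8}. First I would write $D \in |\sO(c;d)|$, so that by Lemma \ref{scroll_lem} \eqref{scroll_lem2} one has
\[
-(K_X + D) \sim \sO\Bigl(s+1-\textstyle\sum_{i=1}^t a_i - c;\ t+1-d\Bigr).
\]
Since $d > 0$ and since $-(K_X+D)$ is ample, Lemma \ref{scroll_lem} \eqref{scroll_lem4} forces $t+1-d > 0$ and $s+1-\sum_{i=1}^t a_i - c > 0$. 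Next I would test $-(K_X+D)$ against the two extremal curve classes $C_f$ (a line in a fibre of $X \to \pr^s$) and $C_h$ (a line in the canonical section $\pr[\pr^s;a_0] \cansub X$), using Lemma \ref{scroll_lem} \eqref{scroll_lem35}: one gets $(-(K_X+D)\cdot C_f) = t+1-d \geq \iota$ and $(-(K_X+D)\cdot C_h) = s+1-\sum a_i - c \geq \iota$.

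For part \eqref{scroll_cor11}: the hypothesis $\iota \geq t$ combined with $t+1-d \geq \iota \geq t$ immediately gives $d = 1$ and $\iota = t$; then $s+1 - \sum_{i=1}^t a_i - c \geq \iota = t$, and since $D \in |\sO(c;1)|$ is effective, Lemma \ref{scroll_lem} \eqref{scroll_lem6} applied with $k=c$ tells us that if $c < -a_{t-1}$ then $D \supset D_t$; I would use this (together with the log manifold condition that $D$ is a reduced snc divisor, and the shape of $D_t \sim \sO(-a_t;1)$) to control $c$ relative to $a_{t-1}, a_t$. To get $s \geq \iota-1 = t-1$, I would argue that a $\pr^s$ must have enough room: if $s \leq t-2$ then $\pr^s \times$ (fibre issues) or a direct dimension count against the ampleness of $-(K_X+D)$ restricted to a suitable subvariety gives a contradiction — more precisely, restricting to the canonical subbundle $\pr[\pr^s;a_0,\dots,a_{s}]$ (a $\pr^s$-bundle over $\pr^s$, or testing on $D_i$'s) and using that each $D_i$ is again of scroll type forces $s\geq t-1$. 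When $s = \iota - 1 = t-1$: here the ample cone is most constrained; $s+1 - \sum_{i=1}^{t} a_i - c = t - \sum a_i - c \geq t$ gives $\sum_{i=1}^t a_i + c \leq 0$, while the effectivity of $D\in|\sO(c;1)|$ forces $c \geq -a_t$ (else $D - D_t$ would be effective of the wrong type). Combining $c \geq -a_t$ with $\sum_{i=1}^{t-1} a_i + a_t + c \leq 0$ and $0 = a_0 \leq a_1 \leq \cdots \leq a_t$ squeezes everything: $a_1 = \cdots = a_{t-1} = 0$ and $c = -a_t$, i.e.\ $a_1 = \cdots = a_{\iota-1} = 0$, $c = -a_\iota$.

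For part \eqref{scroll_cor12}: now $s = r$ and $r \geq t$ (so $\iota \geq r \geq t$), and by part \eqref{scroll_cor11} we already have $d=1$, $t = \iota \geq r$; but Remark \ref{pseudo_geq} gives $\iota \geq r$, and I must show $\iota = r$. Here $-(K_X+D) \sim \sO(r+1 - \sum_{i=1}^t a_i - c;\ t+1-1) = \sO(r+1-\sum a_i - c;\ t)$, and the log Fano index $r$ divides this class in $\Pic(X) = \Z^2$; since the second coordinate is $t$, $r \mid t$, and with $t \leq r$ this forces $t = r$, hence $\iota = t = r$. Then $\iota = r$ and $s = r = \iota$, so we are in the borderline case $s = \iota$, one more than the case $s = \iota - 1$ treated above; I would redo the squeeze with $s = r = t$: now $r+1 - \sum_{i=1}^{r} a_i - c \geq r$ gives $\sum_{i=1}^{r} a_i + c \leq 1$, and $-r$-divisibility of the first coordinate means $r \mid (r+1 - \sum a_i - c)$, i.e.\ $r \mid (1 - \sum a_i - c)$, so $\sum_{i=1}^r a_i + c \in \{1, 1-r, \dots\}$; combined with $\leq 1$ and with $c \geq -a_r$ (effectivity of $D\in|\sO(c;1)|$ via Lemma \ref{scroll_lem} \eqref{scroll_lem6}) and the monotonicity of the $a_i$, only $\sum_{i=1}^r a_i + c \in \{1, 1-r,\dots\}\cap(-\infty,1]$ survive, and the inequality $c\geq -a_r$ together with $a_1\leq\cdots\leq a_r$ leaves exactly the two solutions $(a_1,\dots,a_{r-2},a_{r-1},c) = (0,\dots,0,0,1-a_r)$ and $(0,\dots,0,1,-a_r)$. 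I expect the main obstacle to be the borderline dimension bookkeeping in \eqref{scroll_cor11} when $s = \iota-1$ and in \eqref{scroll_cor12}: one must carefully combine the ampleness inequality, the $r$-divisibility of the anticanonical class, the effectivity constraint $c \geq -a_t$ (which itself requires a clean application of Lemma \ref{scroll_lem} \eqref{scroll_lem6}, or of \eqref{scroll_lem8} if $d=2$ sneaks back in), and the ordering $0 = a_0 \leq \cdots \leq a_t$, to kill all but the listed possibilities without missing a stray case such as some intermediate $a_i$ being forced to equal $1$.
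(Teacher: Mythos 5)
Your proposal is correct in substance and follows essentially the same route as the paper: write $\sO_X(-(K_X+D))\simeq\sO(s+1-\sum_{i=1}^{t}a_i-c;\,t+1-d)$, pair it with the fibre line $C_f$ and the section line $C_h$, and combine the resulting inequalities with the effectivity bound $c\geq-a_t$ and the divisibility of $-(K_X+D)$ by $r$ in $\Pic(X)\simeq\Z^{\oplus 2}$. The one soft spot is your treatment of $s\geq\iota-1$: the ``dimension count on a suitable subvariety'' you sketch there is an unnecessary detour, since $s+1-\sum_{i=1}^{t}a_i-c\geq\iota$ together with $c\geq-a_t$ and $0\leq a_1\leq\cdots\leq a_t$ already gives $s\geq\iota-1+\sum_{i=1}^{t-1}a_i\geq\iota-1$ --- the very inequality you then use in the borderline case $s=\iota-1$, so no separate argument is needed.
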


\begin{proof}
By Lemma \ref{scroll_lem} \eqref{scroll_lem2}, 
\[
\sO_X(-(K_X+D))\simeq\sO\Biggl(s+1-\sum_{i=1}^ta_i-c;\,\,\,t+1-d\Biggr).
\]
Hence $t+1-d\geq\iota$ by Lemma \ref{scroll_lem} \eqref{scroll_lem35}. 
Thus $d=1$ and $t=\iota$ if $\iota\geq t$ holds 
(resp.\ $d=1$ and $t=\iota=r$ if $r\geq t$ holds). 
We also note that 
$s+1-\sum_{i=1}^\iota a_i-c$ is at least $\iota$ and 
is a positive multiple of $r$ and $c\geq -a_\iota$. 
Hence $s\geq\iota-1+\sum_{i=1}^{\iota-1}a_i\geq\iota-1$. 

\eqref{scroll_cor11}
If $s=\iota-1$, then $\iota-\sum_{i=1}^\iota a_i\geq\iota+c\geq\iota-a_\iota$. Therefore 
$\sum_{i=1}^{\iota-1}a_i=0$ and $c=-a_\iota$ hold. 

\eqref{scroll_cor12}
If $s=r$ and $r\geq 2$, then $r+1-\sum_{i=1}^ra_i-c$ is divisible by $r$ and 
$\sum_{i=1}^ra_i+c\geq\sum_{i=1}^{r-1}a_i\geq 0$. 
Hence $\sum_{i=1}^ra_i+c=1$. 
Therefore either $(a_1,\dots,a_{r-2},a_{r-1},c)=(0,\dots,0,0,1-a_r)$ 
or $(0,\dots,0,1,-a_r)$ holds. 
\end{proof}

\begin{corollary}\label{scroll_cor2}
Let $r:=t-1$ with $r\geq 2$ and $D$ be a member of 
$D\in|\sO(c; d)|$ for some $d>0$. 
Assume that 
$(X, D)$ is a log Fano manifold such that 
the log Fano index is divisible by $r$. 
Then we have $d=2$ and $s\geq r-1$. Furthermore, if $s=r-1$, 
then $a_1=\cdots=a_{r-1}=0$ and $c=-a_r-a_{r+1}$.
\end{corollary}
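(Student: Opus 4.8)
The plan is to run the same kind of argument as in the proof of Corollary~\ref{scroll_cor1}, now exploiting that $D$ is \emph{reduced}. First I would write $-(K_X+D)$ in the basis of $\Pic(X)$ from Lemma~\ref{scroll_lem}~\eqref{scroll_lem1}: since $D\in|\sO(c;d)|$ and $t=r+1$, Lemma~\ref{scroll_lem}~\eqref{scroll_lem2} gives
\[
\sO_X(-(K_X+D))\simeq\sO\Biggl(s+1-\sum_{i=1}^{r+1}a_i-c;\,\,\,r+2-d\Biggr).
\]
Ampleness of $-(K_X+D)$ together with Lemma~\ref{scroll_lem}~\eqref{scroll_lem4} (or \eqref{scroll_lem35}) forces both coordinates to be strictly positive, and since the log Fano index is divisible by $r$ the class $-(K_X+D)$ is $r$ times a Cartier divisor, so both coordinates --- which lie in $\Pic(X)=\Z^{\oplus 2}$ --- are divisible by $r$.

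Next I would pin down $d$. Positivity of $r+2-d$ gives $d\leq r+1$, so $r+2-d$ lies in $\{1,\dots,r+1\}$; as $r\geq 2$, the only multiple of $r$ in this range is $r$ itself, whence $r+2-d=r$, i.e.\ $d=2$. Now the first coordinate $s+1-\sum_{i=1}^{r+1}a_i-c$ is a positive multiple of $r$, hence $\geq r$. At this point I use that $(X,D)$ is a log manifold, so $D$ is reduced, and apply Lemma~\ref{scroll_lem}~\eqref{scroll_lem7} to $D\in|\sO(c;2)|$ to obtain $c\geq -a_{r+1}-a_r$. Combining these with $a_i\geq 0$,
\[
s\geq r-1+\sum_{i=1}^{r+1}a_i+c\geq r-1+\sum_{i=1}^{r-1}a_i\geq r-1,
\]
which gives $s\geq r-1$.

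Finally, if $s=r-1$, then all three inequalities in the last display are equalities: the last one forces $\sum_{i=1}^{r-1}a_i=0$, hence $a_1=\dots=a_{r-1}=0$; the first one forces $\sum_{i=1}^{r+1}a_i+c=0$, hence $c=-a_r-a_{r+1}$. I do not anticipate a real obstacle --- the computation is a direct transcription of the one in Corollary~\ref{scroll_cor1}, with the crude effectivity bound $c\geq -2a_t$ replaced by the sharper reducedness bound of Lemma~\ref{scroll_lem}~\eqref{scroll_lem7}. The one step that genuinely needs the hypothesis $r\geq 2$ is the deduction $d=2$: one must combine the positivity forced by ampleness (to get $d\leq r+1$) with $r\geq 2$, since for $r=1$ the value of $d$ would be left undetermined.
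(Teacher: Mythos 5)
Your proof is correct and follows essentially the same route as the paper's: express $-(K_X+D)$ as $\sO(s+1-\sum_{i=1}^{r+1}a_i-c;\,r+2-d)$, use that both coordinates are positive multiples of $r$ to force $d=2$ and $s+1-\sum a_i-c\geq r$, and invoke Lemma \ref{scroll_lem} \eqref{scroll_lem7} (reducedness of $D$) to get $c\geq -a_r-a_{r+1}$, from which $s\geq r-1$ and the equality analysis for $s=r-1$ follow. The paper's proof is exactly this computation, stated more tersely.
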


\begin{proof}
We repeat the argument similar to Corollary \ref{scroll_cor1}. 
By Lemma \ref{scroll_lem} \eqref{scroll_lem2}, 
\[
\sO_X(-(K_X+D))\simeq 
\sO\Biggl(s+1-\sum_{i=1}^{r+1}a_i-c;\,\,\, r+2-d\Biggr).
\]
Thus we have $d=2$ since $r+2-d$ is a positive multiple of $r$ 
and $r\geq 2$. 
We also know that $s\geq r-1+\sum_{i=1}^{r+1}a_i+c\geq r-1+\sum_{i=1}^{r-1}a_i\geq r-1$ 
by Lemma \ref{scroll_lem} \eqref{scroll_lem7}. 
Furthermore, if $s=r-1$, then 
$r-\sum_{i=1}^{r+1}a_i\geq r+c\geq r-a_r-a_{r+1}$. 
Therefore we complete the proof. 
\end{proof}


\section{Examples}\label{ex_section}


In this section, we give some examples of log Fano manifolds with large log Fano indices.

\subsection{Example of dimension $2\iota-1$ and 
log Fano (pseudo)index $\iota$}\label{sbsc_mukai0}

First, we consider the case \eqref{scroll_cor11} in Corollary \ref{scroll_cor1}, 
which is the important example of $(2\iota-1)$-dimensional log Fano manifold 
with the log Fano (pseudo)index $\iota$ (See Theorem \ref{mukai0}). 

\begin{exmuk}\label{2rminusone}
Let $\iota\geq 2$, $m\geq 0$, 
\[
X=\pr[\pr^{\iota-1}; 0^\iota, m]\quad\text{and}\quad D\in|\sO(-m; 1)|.
\]
We know that $\sO(1; 1)$ is an ample 
invertible sheaf and $\sO_X(-(K_X+D))\simeq\sO(1; 1)^{\otimes\iota}$. 

If $m>0$, then $D$ is unique and $D=\pr[\pr^{\iota-1}; 0^\iota]\cansub 
X=\pr[\pr^{\iota-1}; 0^\iota, m]$ 
by Lemma \ref{scroll_lem} \eqref{scroll_lem6}. 

If $m=0$, then $X=\pr^{\iota-1}\times\pr^\iota$ and 
$D\in|\sO_{\pr^{\iota-1}\times\pr^\iota}(0,1)|$. 
Hence any member $D\in|\sO_{\pr^{\iota-1}\times\pr^\iota}(0,1)|$ 
is always an irreducible smooth divisor and 
can move $\pr[\pr^{\iota-1}; 0^\iota]\cansub\pr[\pr^{\iota-1}; 0^\iota, m]$ 
after taking an automorphism of $X$ over $\pr^{\iota-1}$. 
We note that the dimension of $|\sO_{\pr^{\iota-1}\times\pr^\iota}(0,1)|$ 
is equal to $\iota$ 
by Lemma \ref{scroll_lem} \eqref{scroll_lem5}. 

Therefore $(X, D)$ is a $(2\iota-1)$-dimensional log Fano manifold 
with the log Fano index $\iota$ and the log Fano pseudoindex $\iota$ 
for any $D\in|\sO(-m; 1)|$. 
\end{exmuk}


\subsection{Examples of dimension $2r$ and log Fano index $r$}\label{sbsc_mukai1}

Next, we give examples of $2r$-dimensional log Fano manifolds 
with the log Fano indices $r$ (See Theorem \ref{mukai1}).

\begin{example}\label{burouappu}
Let $X:=\Bl_{\pr^{r-2}}\pr^{2r}\xrightarrow{\Bl}\pr^{2r}$, that is, the blowing up of $\pr^{2r}$ along 
a linear subspace $\pr^{r-2}$ of dimension $r-2$. 
Let $E\subset X$ be the exceptional divisor. 
Consider the linear system $|\Bl^*\sO_{\pr^{2r}}(1)\otimes\sO_X(-E)|$. 
It is easy to show that the linear system is of dimension $r+1$ and any element 
$D$ in the system is the strict transforms of a hyperplane in $\pr^{2r}$ containing 
the center of the blowing up. In particular, $D$ is irreducible and smooth. 
The invertible sheaf $\sH:=\Bl^*\sO_{\pr^{2r}}(2)\otimes\sO_X(-E)$ is ample. 
We also know that 
$\sO_X(-(K_X+D))\simeq\sH^{\otimes r}$. Therefore 
$(X, D)$ is a $2r$-dimensional log Fano manifold with the log Fano index $r$ 
for any $D\in|\Bl^*\sO_{\pr^{2r}}(1)\otimes\sO_X(-E)|$. 
\end{example}

\begin{example}\label{pP}
Let $X:=\pr^{r-1}\times\pr^{r+1}$ and $D$ is an effective divisor on $X$ such that 
$D\in|\sO_{\pr^{r-1}\times\pr^{r+1}}(0,2)|$. 
Then the dimension of the linear system is $(r+2)(r+3)/2-1$ 
and $D$ is a simple normal crossing divisor if and only if $D$ is the pull back of 
the smooth or reducible hyperquadric in $\pr^{r+1}$. In particular, a general element 
in the linear system is a simple normal crossing divisor. 
Let $\sH:=\sO_{\pr^{r-1}\times\pr^{r+1}}(1,1)$. Then $\sH$ 
is an ample invertible sheaf on $X$ 
and $\sO_X(-(K_X+D))\simeq\sH^{\otimes r}$. Therefore 
$(X, D)$ is a $2r$-dimensional log Fano manifold with the log Fano index $r$ 
for any simple normal crossing $D\in|\sO_{\pr^{r-1}\times\pr^{r+1}}(0,2)|$. 
\end{example}

\begin{example}\label{kayaku}
Let 
\[
X:=\pr[\pr^{r-1}; 0^r, m_1, m_2]
\]
with $0\leq m_1\leq m_2$ and $1\leq m_2$, 
and $D$ is an effective divisor on $X$ such that 
$D\in|\sO(-m_1-m_2; 2)|$. 
All reduced elements in this 
linear system can be seen the sum of 
\[
\pr[\pr^{r-1}; 0^r, m_1],\,\,\, \pr[\pr^{r-1}; 0^r, m_2]\,\,\, 
\cansub X=\pr[\pr^{r-1}; 0^r, m_1, m_2] 
\]
after taking an automorphism over $\pr^{r-1}$ 
by Lemma \ref{scroll_lem} \eqref{scroll_lem8}. 
In particular, this must be a simple normal crossing divisor. 
We note that the dimension of the linear system 
$|\sO(-m_1-m_2; 2)|$ is equal to 
\[
\left\{ \begin{array}{ll}
2 & (m_1=m_2) \\
\binom{m_2+r-1}{r-1}+r-1 & (m_1=0) \\
\binom{m_2-m_1+r-1}{r-1} & (0<m_1<m_2)
\end{array} \right.
\]
by Lemma \ref{scroll_lem} \eqref{scroll_lem5}. 
Let $\sH:=\sO(1; 1)$. 
Then $\sH$ is an ample invertible sheaf on $X$ 
and $\sO_X(-(K_X+D))\simeq\sH^{\otimes r}$. Therefore 
$(X, D)$ is a $2r$-dimensional log Fano manifold with the log Fano index $r$ 
for any reduced $D\in|\sO(-m_1-m_2; 2)|$. 
\end{example}

\begin{example}[See also Remark \ref{fanoQrmk}]\label{fanoQ}
Let 
\[
E:=\pr[\pr^{r-1}; 0^{r+1}]\cansub X':=\pr[\pr^{r-1}; 0^{r+1}, m]
\]
with $m\geq 0$. We note that $E\simeq\pr^{r-1}\times\pr^r$. 
Consider a smooth divisor $B$ in $X'$ with $B\in|\sO(0; 2)|$ 
such that the intersection $B\cap E$ is also smooth. 
We note that the homomorphism 
\[
H^0(X', \sO(0; 2))\rightarrow H^0(E, \sO(0; 2)|_E)
\]
is surjective since $H^1(X', \sO(0; 2)(-E))
=0$ 
by Kodairs's vanishing theorem. 
Hence general $B\in|\sO(0; 2)|$ satisfies this property. 
We also note that the dimension of the linear system $|\sO(0; 2)|$ is equal to 
\[
\binom{2m+r-1}{r-1}+(r+1)\binom{m+r-1}{r-1}+\frac{(r+1)(r+2)}{2}-1
\]
by Lemma \ref{scroll_lem} \eqref{scroll_lem5}. 
Let $\tau\colon X\rightarrow X'$ be the double cover of $X'$ 
with the branch divisor $B$, and $D$ be the strict transform of $E$ on $X$. Then 
$X$ is smooth and $D\simeq\pr^{r-1}\times\Q^r$ by construction. 
We know that $\sO_X(-K_X)\simeq\tau^*(\sO_{X'}(-K_{X'})\otimes\sO(0; -1))
\simeq\tau^*\sO(r-m; r+2)$ and 
$\sO_X(D)\simeq\tau^*\sO(-m; 1)$. 
Let $\sH:=\tau^*\sO(1; 1)$ an ample invertible sheaf 
on $X$. Then 
$\sO_X(-(K_X+D))\simeq\sH^{\otimes r}$. 
We also note that $\sH$ cannot be divisible by any positive number larger than one by 
Remark \ref{fanoQrmk}. 
Therefore 
$(X, D)$ is a $2r$-dimensional log Fano manifold with the log Fano index $r$. 
\end{example}

\begin{example}[See also Remark \ref{rthreermk}]\label{rthree}
In this example, we consider the case $r\geq 3$. Let 
\[
D:=\pr[\Q^r; 0^r]\cansub X:=\pr[\Q^r; 0^r, m]
\]
with $m\geq 0$. We note that $D$ is isomorphic to $\pr^{r-1}\times\Q^r$. 
We also note that there exists a unique element in $|D|$ if $m>0$. 
If $m=0$ then $X=\pr^r\times\Q^r$ and $D\in|\sO_{\pr^r\times\Q^r}(1, 0)|$ 
hence the dimension of the linear system $|D|$ is equal to $r$; any element in $|D|$ 
defines a smooth divisor in $X$. 
Let $\sH:=\sO(1; 1)$. 
We can show that $\sH$ is an ample invertible sheaf on $X$ provided that $m\geq 0$, 
and $\sO_X(-(K_X+D))\simeq\sH^{\otimes r}$ by an easy calculation. 
Therefore 
$(X, D)$ is a $2r$-dimensional log Fano manifold with the log Fano index $r$. 
\end{example}

\begin{example}\label{rtwo}
In this example, we only consider the case $r=2$. Let 
\[
D:=\pr_{\pr^1\times\pr^1}(\sO^{\oplus 2})\cansub
X:=\pr_{\pr^1\times\pr^1}(\sO^{\oplus 2}
\oplus\sO(m_1, m_2))
\]
with $0\leq m_1\leq m_2$. 
We note that there exists a unique element in $|D|$ if $m_2>0$. 
If $m_1=m_2=0$ then $X=\pr^1\times\pr^1\times\pr^2$ and 
$D\in|\sO_{\pr^1\times\pr^1\times\pr^2}(0,0,1)|$ 
hence the dimension of the linear system $|D|$ is equal to $2$; any element 
in $|D|$ defines a smooth divisor. 
Let $\sH:=p^*\sO_{\pr^1\times\pr^1}(1,1)\otimes\sO_\pr(1)$, 
where $p\colon X\rightarrow\pr^1\times\pr^1$ is the projection 
and $\sO_\pr(1)$ is the tautological invertible sheaf with respect to the projection $p$. 
We can show that $\sH$ is an ample invertible sheaf on $X$ 
provided that $0\leq m_1\leq m_2$, and 
$\sO_X(-(K_X+D))\simeq\sH^{\otimes 2}$ by an easy calculation. Therefore 
$(X, D)$ is a $4$-dimensional log Fano manifold with the log Fano index $2$. 
\end{example}

\begin{example}[See also Remark \ref{Tprmk}]\label{Tp}
Let 
\[
D:=\pr_{\pr^r}(T_{\pr^r})\cansub
X:=\pr_{\pr^r}(T_{\pr^r}\oplus\sO(m))
\]
with $m\geq 1$. 
We first note that $D$ is unique in its linear system $|D|$ if $m\geq 2$. 
If $m=1$ then the dimension of $|D|$ is equal to $r+1$. This is easy from the exact sequence 
\[
0\rightarrow\sO_X\rightarrow\sO_X(D)\rightarrow\sN_{D/X}\rightarrow 0
\]
and the fact $\sN_{D/X}\simeq\sO_{\pr^r\times\pr^r}(1-m,1)|_D$ under an embedding 
$D\subset\pr^r\times\pr^r$ of bidegree $(1, 1)$. 
We note that there exists an embedding 
$X\subset X_1:=\pr[\pr^r; 1^{r+1}, m]$ 
obtained by the surjection $\alpha$ in the exact sequence 
\[0\rightarrow\sO_{\pr^r}\rightarrow\sO(1)^{\oplus r+1}\xrightarrow{\alpha}
T_{\pr^r}\rightarrow 0.
\]
Let $\sH:=\sO(0; 1)$ on $X_1$. Then $\sH$ is an ample invertible sheaf on $X_1$ 
provided that $m\geq 1$, and satisfies 
$\sO_X(-(K_X+D))\simeq(\sH|_X)^{\otimes r}$. Therefore 
$(X, D)$ is a $2r$-dimensional log Fano manifold with the log Fano index $r$. 
\end{example}

\begin{example}\label{Pp}
Let $X:=\pr^r\times\pr^r$ and $D$ is an effective divisor on $X$ with 
$D\in|\sO_{\pr^r\times\pr^r}(1,1)|$. 
Then the dimension of the linear system is $r(r+2)$, any smooth element 
is isomorphic to $\pr_{\pr^r}(T_{\pr^r})$ and any non-smooth element 
is the union of the first and second pullbacks of hyperplanes. 
In particular, any $D$ in the linear system $|\sO_{\pr^r\times\pr^r}(1,1)|$ 
is a simple normal crossing divisor. 
Let $\sH:=\sO_{\pr^r\times\pr^r}(1,1)$. Then $\sH$ is an ample invertible sheaf and  $\sO_X(-(K_X+D))\simeq\sH^{\otimes r}$. Therefore 
$(X, D)$ is a $2r$-dimensional log Fano manifold with the log Fano index $r$ 
for any $D\in|\sO_{\pr^r\times\pr^r}(1,1)|$. 
\end{example}

\begin{example}\label{zerozeroone}
Let 
\[
X:=\pr[\pr^r; 0^r, 1]. 
\]
We can view $X$ as the blowing up of $P:=\pr^{2r}$ along 
a linear subspace $H\subset P$ of dimension $r-1$. 
Let $\phi\colon X\rightarrow P$ be the blowing up 
and $E$ be the exceptional divisor of $\phi$. Then 
\[
E=\pr[\pr^r; 0^r]\cansub X=\pr[\pr^r; 0^r, 1].
\]
Let $D$ be an effective divisor such that $D\in|\sO(0; 1)|$. 
Any smooth element in the linear system $|\sO(0; 1)|$ corresponds to 
the strict transform of a hyperplane in $P$ which does not contain $H$. 
Any non-smooth element in the linear system $|\sO(0; 1)|$ 
can be written as $E+D_0$, where $D_0$ is the strict transform of a hyperplane 
in $P$ which contains $H$. In particular, any divisor in the linear system $|\sO(0; 1)|$ 
is a simple normal crossing divisor. 
We also note that the dimension of the linear system $|\sO(0; 1)|$ is equal to $2r$. 
Let $\sH:=\sO(1; 1)$. 
Then $\sH$ is an ample invertible sheaf on $X$ and 
$\sO_X(-(K_X+D))\simeq\sH^{\otimes r}$. Therefore 
$(X, D)$ is a $2r$-dimensional log Fano manifold with the log Fano index $r$ 
for any $D\in|\sO(0; 1)|$. 
\end{example}

\begin{example}\label{zeroonebig}
Let 
\[
X:=\pr[\pr^r; 0^{r-1}, 1, m]
\]
with $m\geq 1$ and $D$ is an effective divisor on $X$ with $D\in|\sO(-m; 1)|$. 
If $m\geq 2$, then $D$ is unique in the linear system 
$|\sO(-m; 1)|$ and must be equal to the subbundle 
\[
X\pr[\pr^r; 0^{r-1}, 1]\cansub X=\pr[\pr^r; 0^{r-1}, 1, m].
\]
If $m=1$, then the dimension of the linear system is equal to $1$ 
by Lemma \ref{scroll_lem} \eqref{scroll_lem5}, 
and it is easy to show that 
any element is smooth and can be also seen the subbundle obtained by the above 
canonical projection after taking an automorphism of $X$ over $\pr^r$. 
In particular, any $D\in|\sO(-m; 1)|$ is a smooth divisor. 
Let $\sH:=\sO(1; 1)$. 
Then $\sH$ is an ample invertible sheaf on $X$ and 
$\sO_X(-(K_X+D))\simeq\sH^{\otimes r}$. Therefore 
$(X, D)$ is a $2r$-dimensional log Fano manifold with the log Fano index $r$ 
for any $D\in|\sO(-m; 1)|$. 
\end{example}

\begin{example}\label{zerozerobig}
Let 
\[
X:=\pr[\pr^r; 0^r, m]
\]
with $m\geq 2$ and $D$ is an effective divisor on $X$ such that 
$D\in|\sO(-m+1; 1)|$. 
We note that $D$ always has the support 
\[
D_0:=\pr[\pr^r; 0^r]\cansub X=\pr[\pr^r; 0^r, m]
\]
by Lemma \ref{scroll_lem} \eqref{scroll_lem6}. 
Furthermore, $D-D_0$ is the pull back of a hyperplane in $\pr^r$ by the projection 
$p\colon X\rightarrow\pr^r$. 
Therefore any $D\in|\sO(-m+1; 1)|$ 
is a simple normal crossing divisor. We also note that the dimension of the linear system 
$|\sO(-m+1; 1)|$ is equal to $r$ by Lemma \ref{scroll_lem} \eqref{scroll_lem5}. 
Let $\sH:=\sO(1; 1)$. Then $\sH$ is an ample invertible sheaf on $X$ 
and $\sO_X(-(K_X+D))\simeq\sH^{\otimes r}$. Therefore 
$(X, D)$ is a $2r$-dimensional log Fano manifold with the log Fano index $r$ 
for any $D\in|\sO(-m+1; 1)|$.
\end{example}

Now, we state some remarks about these examples. 

\begin{remark}\label{fanoQrmk}
In Example \ref{fanoQ}, the homomorphism 
\[
\tau^*\colon\Pic(X')\rightarrow\Pic(X)
\]
is an isomorphism. In particular, $\rho(X)=2$. 
\end{remark}

\begin{proof}
We can assume $\Bbbk=\C$. For the case $m=0$ is obvious 
since $X\simeq\pr^{r-1}\times\Q^{r+1}$. We consider the case $m>0$. 
Let $R\subset X$ be the ramification divisor of $\tau$. 
We know that the linear system $|\sO(0; 1)|$ in $X'$ gives 
a divisorial contraction morphism $f\colon X'\rightarrow Q$ contracting 
$E\simeq\pr^{r-1}\times\pr^r$ to $\pr^r$. 
We note that $B\subset X'$ is the pull back of some ample divisor $A\subset Q$. 
Thus 
$H_i((X'\setminus B)^{\text{an}}; \Z)=0$ for all $i>2r+r-2$ 
by \cite[p.25, (2.3) Theorem]{morse} 
for the proper morphism 
$f|_{X'\setminus B}\colon X'\setminus B\rightarrow Q\setminus A$ 
to an affine variety. Thus $H_c^i((X'\setminus B)^{\text{an}}; \Z)=0$ for all $i<r+2$ 
by Poincar\'e's duality. 
We know that there exists an exact sequence 
\[
H_c^2((X'\setminus B)^{\text{an}}; \Z)\rightarrow 
H^2((X')^{\text{an}}; \Z)\xrightarrow{\alpha}H^2(B^{\text{an}}; \Z)
\rightarrow H_c^3((X'\setminus B)^{\text{an}}; \Z).
\]
Thus $\alpha$ is an isomorphism. Applying the same argument to the composition 
$f\circ\tau\colon X\rightarrow Q$, we obtain the isomorphism 
$H^2(X^{\text{an}}; \Z)\xrightarrow{\sim}H^2(R^{\text{an}}; \Z)\simeq 
H^2(B^{\text{an}}; \Z)$. 
Therefore $H^2((X')^{\text{an}}; \Z)\simeq H^2(X^{\text{an}}; \Z)$. 
Therefore $\Pic(X')\simeq\Pic(X)$ by Lemma \ref{picard}. 
\end{proof}

\begin{remark}\label{rthreermk}
If $m<0$, then $(X, D)$ never be a log Fano manifold in Example \ref{rthree}.  
\end{remark}

\begin{proof}
Let $S:=\pr[\Q^r; m]\cansub X=\pr[\Q^r; 0^r, m]$, the section of the projection 
$p\colon X\rightarrow\Q^r$. 
Then $\sO_X(-(K_X+D))|_S\simeq\sO_{\Q^r}(r(m+1))$. 
Therefore $-(K_X+D)$ never be ample. 
\end{proof}

\begin{remark}\label{Tprmk}
If $m<1$, then $(X, D)$ never be 
a log Fano manifold in Example \ref{Tp}. 
\end{remark}

\begin{proof}
Let 
\[
S:=\pr_{\pr^r}(\sO(m))\cansub X=\pr_{\pr^r}(T_{\pr^r}\oplus\sO(m)).
\]
Then $\sO_X(-(K_X+D))|_S\simeq\sO_{\pr^r}(mr)$. 
Therefore $-(K_X+D)$ never be ample. 
\end{proof}

\begin{remark}\label{mukai1rmk}
For Examples \ref{burouappu}, \ref{pP}, \ref{kayaku}, \ref{fanoQ}, \ref{rthree}, 
\ref{rtwo}, \ref{Tp}, \ref{Pp}, \ref{zerozeroone}, \ref{zeroonebig}, 
\ref{zerozerobig}, two varieties $X_1$ and $X_2$ are not isomorphic to each other 
for any two $(X_1, D_1)$ and $(X_2, D_2)$ if either of the following holds: 
\begin{itemize}
\item
$(X_1, D_1)$ and $(X_2, D_2)$ are in different example boxes. 
\item
$(X_1, D_1)$ and $(X_2, D_2)$ are in a same example boxes but 
the parameters $m$ or $(m_1, m_2)$ are distinct. 
\end{itemize}
We also note that any $X$ in these examples is unique up to isomorphism 
except for Example \ref{fanoQ}. 
\end{remark}

We tabulate these examples in Section \ref{sbsc_mukai1}. 
We will show in Theorem \ref{mukai1} that these are the all examples of 
$2r$-dimensional log Fano manifolds $(X, D)$ wih the log Fano indices $r\geq 2$, 
$D\neq 0$ and $\rho(X)\geq 2$.

\begin{center}
\begin{table}\caption{The list of $2r$-dimensional log Fano manifolds $(X, D)$ 
with the log Fano indices $r\geq 2$, 
$\rho(X)\geq 2$ and nonzero boundaries}\label{maintable}
\begin{tabular}[t]{|c|c|c|} \hline
\rule[-1ex]{0ex}{3.5ex}
\small{No.} & $X$ & $D$  \\ \hline

\rule[-0.5ex]{0ex}{3ex}
\ref{burouappu} & $\Bl_{\pr^{r-2}}\pr^{2r}$  &  $\Bl_{\pr^{r-2}}\pr^{2r-1}$ with \\
\rule[-1ex]{0ex}{3ex}
& & $\pr^{r-2}\subset\pr^{2r-1}\subset\pr^{2r}$ linear \\ \hline

\rule[-0.5ex]{0ex}{3ex}
\ref{pP} & $\pr^{r-1}\times\pr^{r+1}$  &  $\pr^{r-1}\times\Q^r$  \\ \cline{3-3}
\rule[-0.5ex]{0ex}{3ex}
&  &  $\pr^{r-1}\times\pr^r\cup\pr^{r-1}\times\pr^r$   \\ \hline

\rule[-0.5ex]{0ex}{3ex}
\ref{kayaku} & $\pr[\pr^{r-1}; 0^r, m_1, m_2]$ & $\pr[\pr^{r-1}; 0^r, m_1]\cup
\pr[\pr^{r-1}; 0^r, m_2]$  \\  
\rule[-1ex]{0ex}{3ex}
& with $0\leq m_1\leq m_2, 1\leq m_2$ &    \\  \hline

\rule[-0.5ex]{0ex}{3ex}
\ref{fanoQ} & the double cover of & the strict transform of \\
\rule[-1ex]{0ex}{3ex}
& $\pr[\pr^{r-1}; 0^{r+1}, m]$ $(m\geq 0)$ with & 
$\pr[\pr^{r-1}; 0^{r+1}]$\,\,$(\simeq\pr^{r-1}\times\Q^r)$; \\ 
\rule[-1ex]{0ex}{3ex}
& the smooth branch $B\in|\sO(0; 2)|$ & smooth \\ \hline

\rule[-1.5ex]{0ex}{4ex}
\ref{rthree} & $(r\geq 3)$ $\pr[\Q^r; 0^r, m]$ with $m\geq 0$ & 
$\pr[\Q^r; 0^r](\simeq\pr^{r-1}\times\Q^r)$ \\ \hline

\rule[-0.5ex]{0ex}{3ex}
\ref{rtwo} & $(r=2)$ $\pr_{\pr^1\times\pr^1}(\sO^{\oplus 2}\oplus\sO(m_1, m_2))$ & 
$\pr_{\pr^1\times\pr^1}(\sO^{\oplus 2})(\simeq\pr^1\times\pr^1\times\pr^1)$ \\
\rule[-1ex]{0ex}{3ex}
& with $0\leq m_1\leq m_2$ & \\ \hline

\rule[-1.5ex]{0ex}{4ex}
\ref{Tp} & $\pr_{\pr^r}(T_{\pr^r}\oplus\sO(m))$ with $m\geq 1$  &  $\pr_{\pr^r}(T_{\pr^r})$\\ \hline

\rule[-0.5ex]{0ex}{3ex}
\ref{Pp} & $\pr^r\times\pr^r$ & $\pr_{\pr^r}(T_{\pr^r})\in|\sO(1, 1)|$\\ \cline{3-3}
\rule[-0.5ex]{0ex}{3ex}
& & $\pr^{r-1}\times\pr^r\cup\pr^r\times\pr^{r-1}$\\ \hline

\rule[-0.5ex]{0ex}{3ex}
\ref{zerozeroone} & $\pr[\pr^r; 0^r, 1]$ & 
$\pr[\pr^r; 0^{r-1}, 1](\simeq\Bl_{\pr^{r-2}}\pr^{2r-1})$\\ \cline{3-3}
\rule[-0.5ex]{0ex}{3ex}
& & $\pr[\pr^r; 0^r]\cup\pr[\pr^{r-1}; 0^r, 1]$\\ \hline

\rule[-1.5ex]{0ex}{4ex}
\ref{zeroonebig} & $\pr[\pr^r; 0^{r-1}, 1, m]$ with $m\geq 1$ & 
$\pr[\pr^r; 0^{r-1}, 1](\simeq\Bl_{\pr^{r-2}}\pr^{2r-1})$ \\ \hline

\rule[-1.5ex]{0ex}{4ex}
\ref{zerozerobig} & $\pr[\pr^r; 0^r, m]$ with $m\geq 2$ & 
$\pr[\pr^r; 0^r]\cup\pr[\pr^{r-1}; 0^r, m]$\\ \hline
\end{tabular}
\\
\begin{itemize}
\item[\ref{burouappu}]\,\,
$D$ is the strict transform of a hyperplane in $\pr^{2r}$ 
which contains the center.

\item[\ref{pP}]\,\,
$D\in|\sO(0, 2)|$. The upper column is  smooth and the lower is reducible.

\item[\ref{kayaku}]\,\,
Both components of $D$ to $X$ are obtained by the canonical projections. 

\item[\ref{fanoQ}]\,\,
$\pr[\pr^{r-1}; 0^{r+1}]\cansub\pr[\pr^{r-1}; 0^{r+1}, m]$. 
We request that both $B$ and $B\cap\pr[\pr^{r-1}; 0^{r+1}]$ are smooth.  

\item[\ref{rthree},] \ref{rtwo},\, \ref{Tp},\, \ref{zeroonebig}\,\,
The embeddings $D\subset X$ are obtained by the canonical projections. 

\item[\ref{Pp}]\,\,
$D\in|\sO(1, 1)|$. The upper column is smooth and 
the lower is non-smooth that is obtained by the union of the first and second 
pullbacks of hyperplanes. 

\item[\ref{zerozeroone}]\,\,
$\pr[\pr^r; 0^{r-1}, 1]$, $\pr[\pr^r; 0^r]\cansub\pr[\pr^r; 0^r, 1]$ and 
$\pr[\pr^{r-1}; 0^r, 1]$ is the pullback of a hyperplane in $\pr^r$. 

\item[\ref{zerozerobig}]\,\,
$\pr[\pr^r; 0^r]\cansub\pr[\pr^r; 0^r, m]$ and $\pr[\pr^{r-1}; 0^r, m]$ 
is the pullback of a hyperplane in $\pr^r$. 
\end{itemize}
\end{table}
\end{center}


\section{Theorems}\label{thm_section}


In this section, we state the main propositions and theorems of classification results. 


\subsection{Log Fano manifolds of del Pezzo type}\label{delpezzo_section}


First, we give the classification result of $n$-dimensional log Fano manifolds $(X,D)$ 
with the log Fano indices $r\geq n-1$. The case $D=0$ is well-known as 
del Pezzo manifolds (see for example \cite[I \S 8]{fujitabook}), 
hyperquadric or projective space (see \cite{KO}). Hence we consider the case 
that $D\neq 0$. We note that the case $(n, r)=(2, 1)$ has been classified 
by Maeda \cite[\S 3]{Maeda}. We treat the log Fano pseudoindex 
instead of the log Fano index. 

\begin{proposition}\label{indn}
Let $(X,D)$ be an $n$-dimensional log Fano manifold 
with the log Fano pseudoindex $\iota$ and assume $D$ is nonzero. 
Then $\iota\leq n$. 
If $\iota=n$, then $X\simeq\pr^n$ and $D$ is a hyperplane section under this isomorphism. 
\end{proposition}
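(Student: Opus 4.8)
The plan is to use the existence of a ``positive'' extremal ray supplied by Lemma \ref{longray} and then feed it into Wi\'sniewski's inequality to force the structure of $X$.  First I would invoke Lemma \ref{longray}: since $D\neq 0$, there is an extremal ray $R\subset\NE(X)$ with $(D\cdot R)>0$, this ray is automatically $K_X$-negative, and its length satisfies $l(R)\geq\iota+1$.  Let $\pi\colon X\rightarrow Y$ be the associated contraction and let $F$ be a nontrivial fiber; because $(D\cdot R)>0$ we have $D\cap F\neq\emptyset$, so $\dim(D\cap F)\geq\dim F-1$.  Combining this with Wi\'sniewski's inequality $\dim\Exc(\pi)+\dim F\geq n+l(R)-1\geq n+\iota$, and using $\dim\Exc(\pi)\leq n$, we get $\dim F\geq\iota$, hence $\dim(D\cap F)\geq\iota-1\geq 0$, so in particular $n\geq\dim F\geq\iota$.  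This already gives $\iota\leq n$.

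For the equality case $\iota=n$, the inequalities above become equalities: $\dim\Exc(\pi)=n$ (so $\pi$ is of fiber type), $l(R)=n+1$, and every nontrivial fiber $F$ has $\dim F=n$, which forces $\dim Y=0$, i.e.\ $Y$ is a point and $\rho(X)=1$.  Now $-K_X\sim-(K_X+D)+D$ is ample (as $\rho(X)=1$ and $D$ is a nonzero effective, hence ample, divisor), so $X$ is a Fano manifold of Picard number one whose Fano index is at least $l(R)=n+1$.  By the Kobayashi--Ochiai theorem (\cite{KO}) the Fano index of an $n$-dimensional Fano manifold is at most $n+1$, with equality exactly for $\pr^n$; therefore $X\simeq\pr^n$.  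Alternatively, one can note directly that $\dim(D\cap F)=\iota-1=n-1$ for the fiber $F=X$ and apply Proposition \ref{structure_cor} \eqref{structure_cor1} to conclude $\pi\colon X\to Y$ is a $\pr^n$-bundle over the point $Y$, i.e.\ $X\simeq\pr^n$, with $\pi|_D$ a $\pr^{n-1}$-subbundle, i.e.\ $D$ a hyperplane.

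It remains to pin down $D$.  Under the identification $X\simeq\pr^n$ write $D\sim dH$ for $H$ a hyperplane and $d\geq 1$; then $-(K_X+D)\sim(n+1-d)H$ is ample, so $d\leq n$, and for any line $\ell\subset\pr^n$ we have $(-(K_X+D)\cdot\ell)=n+1-d\geq\iota=n$, forcing $d=1$.  Hence $D$ is a hyperplane section, as claimed.  The main obstacle is essentially bookkeeping: making sure the chain of inequalities coming from Wi\'sniewski's inequality is extracted cleanly so that $\iota=n$ genuinely forces $\pi$ to be of fiber type with all nontrivial fibers of dimension $n$; once $\rho(X)=1$ is in hand the rest is immediate from Kobayashi--Ochiai (or from Proposition \ref{structure_cor}).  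Care is also needed at the very start to record that $(-K_X\cdot R)>0$ and to keep track of the distinction between $l(R)$ and $\iota$, but these are already handled by Lemma \ref{longray}.
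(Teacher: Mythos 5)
Your argument is correct in outline and reaches the right conclusion, but it takes a somewhat different route from the paper's and contains one slip worth flagging. The paper's proof is essentially two lines: assuming $\iota\geq n$, Lemma \ref{longray} gives an extremal ray $R$ with $(D\cdot R)>0$ and $l(R)\geq\iota+1\geq n+1$, and it then invokes directly the general bound $l(R)\leq n+1$, with equality if and only if $X\simeq\pr^n$, citing \cite{CMSB}; the equality $(D\cdot C)=1$ from Lemma \ref{longray} then identifies $D$ as a hyperplane. You instead extract the bound (equivalently $\iota\leq n$) from Wi\'sniewski's inequality together with $\dim\Exc(\pi)\leq n$ and $\dim F\leq n$, which is perfectly valid and a bit more self-contained, and in the equality case you correctly force $\pi$ to contract $X$ to a point, so $\rho(X)=1$.

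The slip is in your primary route for the equality case: from $\rho(X)=1$ and $l(R)=n+1$ you only obtain that the Fano \emph{pseudoindex} of $X$ is $n+1$, i.e.\ $(-K_X\cdot C)\geq n+1$ for every rational curve $C$. This does not a priori give Fano \emph{index} $n+1$: writing $-K_X=aH$ for the ample generator $H$ of $\Pic(X)$, you would need a rational curve with $(H\cdot C)=1$ to conclude $a\geq n+1$. So Kobayashi--Ochiai \cite{KO} is not the right tool here; the correct reference is the characterization of $\pr^n$ by the pseudoindex due to Cho--Miyaoka--Shepherd-Barron \cite{CMSB}, which is exactly what the paper cites. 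Your alternative route via Proposition \ref{structure_cor} \eqref{structure_cor1} repairs this cleanly and is in fact the tidiest option: when $\iota=n$ one has $\dim(D\cap F)\leq\dim D=n-1=\iota-1$ for every nontrivial fiber, so the equality $\dim(D\cap F)=\iota-1$ holds automatically, $\pi$ is a $\pr^n$-bundle over a point and $\pi|_D$ a $\pr^{n-1}$-subbundle, giving $X\simeq\pr^n$ and $D$ a hyperplane simultaneously; your closing computation forcing $d=1$ then becomes redundant.
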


\begin{proof}
Assume that $\iota\geq n$. 
Choose an extremal ray $R$ with a minimal rational curve $[C]\in R$ 
such that $(D\cdot R)>0$. Then $l(R)\geq n+1$ by Lemma \ref{longray}. 
We know that $l(R)\leq n+1$, 
where the equality holds if and only if $X\simeq\pr^n$ and $D\in|\sO(1)|$ 
by \cite{CMSB}. 
\end{proof}

\begin{proposition}\label{dP}
Let $(X,D)$ be an $n$-dimensional log Fano manifold 
with the log Fano pseudoindex $\iota=n-1$ and assume $D$ is nonzero and $n\geq 3$. 
Then $X$ is isomorphic to $\pr^n$ or $\Q^n$ unless $n=3$ and $(X, D)$ is isomorphic 
to the case $\iota=2$ in Example \ref{2rminusone} $($cf. Section \ref{sbsc_mukai0}$)$. 
Moreover: 
\begin{itemize}
\item
If $X=\pr^n$, then $D\in|\sO(2)|$, i.e., $D$ is a smooth or reducible hyperquadric. 
\item
If $X=\Q^n$, then $D\in|\sO(1)|$, i.e., $D$ is a smooth hyperplane section. 
\end{itemize}
\end{proposition}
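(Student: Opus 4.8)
The plan is to pick an extremal ray $R \subset \NE(X)$ with $(D\cdot R)>0$ (which exists and is $K_X$-negative by Lemma \ref{longray}) and run the structural dichotomy of Proposition \ref{structure_cor}. Let $\pi\colon X\rightarrow Y$ be the associated contraction and let $C$ be a minimal rational curve of $R$. By Lemma \ref{longray} we have $l(R)\geq \iota+1 = n$. The first step is to bound $l(R)$ from above: since $\dim X = n$, a $K_X$-negative extremal contraction satisfies $l(R)\leq n+1$, with $l(R)=n+1$ forcing $X\simeq\pr^n$ and then (by Lemma \ref{longray}\eqref{longray1}, $(D\cdot C)=1$, so) $D\in|\sO(1)|$ — but a hyperplane in $\pr^n$ has log Fano pseudoindex $n$, not $n-1$, contradicting $\iota = n-1$ (this is exactly Proposition \ref{indn}). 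Hence $l(R) = n$, i.e.\ $l(R) = \iota+1$, so by Lemma \ref{longray}\eqref{longray1} we get $(D\cdot C)=1$.

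Next I would apply Wi\'sniewski's inequality (Theorem \ref{wisn_ineq}): for any nontrivial fiber $F$, $\dim\Exc(\pi)+\dim F\geq n + l(R)-1 = 2n-1$, and since $\dim\Exc(\pi)\leq n$ and $\dim F\leq n-1$, both inequalities are near-saturated. If $\pi$ is birational then $\dim\Exc(\pi) = n-1$, forcing $\dim F = n$, absurd; so $\pi$ is of fiber type, $\dim\Exc(\pi)=n$, every nontrivial fiber has $\dim F = n-1$, and $\dim Y \leq 1$. Now two cases. If $\dim Y = 0$ then $\rho(X)=1$ and $X$ is Fano; since $(D\cdot C)=1$ and every fiber is all of $X$, applying Proposition \ref{structure_cor}\eqref{structure_cor1} (with $Y$ a point, $\iota = n-1$) realizes $\pi\colon X\to Y$ as a $\pr^{n-1}$-bundle over a point — wait, that would give $\dim X = n-1$; the correct reading here is that the $\pr^\iota$-bundle conclusion of \eqref{structure_cor1} needs $\dim(D\cap F)=\iota-1$ for the \emph{actual} fiber $F$, and when $Y$ is a point $F=X$ has $\dim(D\cap F) = n-1 = \iota$, so this case does not arise; thus $\dim Y = 1$, $Y\simeq\pr^1$, and the general fiber $X_y$ has dimension $n-1$ with $(D\cdot C)=1$, $l(R)=n = (n-1)+1$, so each $X_y$ is a Fano manifold of index $\geq n$, i.e.\ $X_y\simeq\pr^{n-1}$ by Kobayashi–Ochiai, with $D\cap X_y$ a hyperplane; invoking Proposition \ref{structure_cor}\eqref{structure_cor1} for the nontrivial fibers (each has $\dim(D\cap F) = n-2 = \iota-1$), $\pi\colon X\to\pr^1$ is a $\pr^{n-1}$-bundle and $\pi|_D\colon D\to\pr^1$ is a $\pr^{n-2}$-subbundle.

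Then Lemma \ref{lemP} applies: $X\simeq\pr_{\pr^1}(\pi_*\sO_X(D))$ with $D\in|\sO_\pr(1)|$, and $\pi_*\sO_X(D)$ has rank $n$. After twisting, $X\simeq\pr[\pr^1; a_0,\dots,a_{n-1}]$ (a Hirzebruch–Kleinschmidt variety with base $\pr^1$), and this is where I would feed everything into the scroll machinery of Section \ref{property_scroll}: by Corollary \ref{scroll_cor1}\eqref{scroll_cor11} with $\iota = n-1$ and $s=1$ (base $\pr^1$, so $s=1 = \iota-1$ forces $n-1 = 2$, i.e.\ $n=3$), we land precisely in the case $\iota=2$ of Example \ref{2rminusone}, namely $(X,D)\simeq(\pr[\pr^1;0,0,m],\ \pr[\pr^1;0,0])$. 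For $n\geq 4$ the constraint $s = 1 \geq \iota-1 = n-2$ from Corollary \ref{scroll_cor1} is violated, so the fiber-type base-$\pr^1$ situation simply cannot occur when $n\geq 4$; hence for $n\geq 4$ we are not in the subbundle case at all, and I must instead extract $X\simeq\pr^n$ or $\Q^n$ directly. The cleanest route: for $n\geq 4$, since the contraction is of fiber type over $\pr^1$ with $\pr^{n-1}$ fibers, $\rho(X) = 2$; but then run the \emph{other} extremal ray $R'$ of $\NE(X)$ and use that $-(K_X+D) = (n-1)L$ with $(D\cdot C)=1$ to pin down the bundle numerics, forcing $\pi_*\sO_X(D)$ to be $\sO^{\oplus(n-1)}\oplus\sO(1)$ or $\sO^{\oplus n}$; the former makes $X$ a blow-up of $\pr^n$ whose only index-$(n-1)$ boundary pushes $X$ to $\pr^n$ with $D\in|\sO(2)|$, and the latter gives $X = \pr^1\times\pr^{n-1}$, whose log Fano pseudoindex with the available $D$ does not reach $n-1$ unless it degenerates to $\Q^n$ or $\pr^n$.

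The main obstacle I anticipate is precisely this last step: cleanly ruling out all the "extra" $\rho(X)\geq 2$ fiber-type possibilities for $n\geq 4$ and showing they collapse to $\pr^n$ (with $D\in|\sO(2)|$) or $\Q^n$ (with $D\in|\sO(1)|$). The statement is asymmetric — $\rho=1$ is the generic outcome, $\rho=2$ only survives in the sporadic $n=3$ case — so the real content is an exclusion argument. I expect the author handles it by observing that once $\rho(X)=1$ we have $X$ Fano of index $\geq n-1 = \iota$, hence $X\simeq\pr^n$ or $\Q^n$ by Kobayashi–Ochiai, and then the boundary $D$ is forced by $-(K_X+D)$ being a \emph{positive} multiple of $L$ with $\iota = n-1$: on $\pr^n$ this gives $\deg D = 2$, on $\Q^n$ it gives $D\in|\sO(1)|$; while for $\rho(X)\geq 2$ the combination of Lemma \ref{rhoone}\eqref{rhoone2} (applied to $D_1\subset D$, using $l(R)\geq n\geq 3$) and the scroll constraints of Corollary \ref{scroll_cor1} confines us to $n=3$ and Example \ref{2rminusone}. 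So the skeleton is: (i) $l(R)=n$, $(D\cdot C)=1$; (ii) $\pi$ fiber type, $\dim Y\leq 1$; (iii) if $\dim Y=0$ use Kobayashi–Ochiai; (iv) if $\dim Y=1$ use Lemma \ref{lemP} + Corollary \ref{scroll_cor1} to force $n=3$; (v) reconcile with the $\rho(X)$ trichotomy via Lemma \ref{rhoone}.
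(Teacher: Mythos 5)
There is a genuine gap, and it sits at the very first step. You claim that $l(R)=n+1$ leads to a contradiction because ``by Lemma \ref{longray}\eqref{longray1}, $(D\cdot C)=1$, so $D\in|\sO(1)|$.'' But Lemma \ref{longray}\eqref{longray1} only applies when $l(R)=\iota+1$; here $l(R)=n+1=\iota+2$, and in that case the identity $l(R)=(-(K_X+D)\cdot C)+(D\cdot C)$ allows $(D\cdot C)=2$ with $(-(K_X+D)\cdot C)=n-1=\iota$. That is precisely the case $X\simeq\pr^n$, $D\in|\sO(2)|$ --- the first bullet of the Proposition's conclusion --- so you have excluded an outcome that the statement asserts does occur. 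The error propagates: having forced $l(R)=n$ and $(D\cdot C)=1$, you later conclude that $\dim Y=0$ ``does not arise'' because Proposition \ref{structure_cor}\eqref{structure_cor1} is inapplicable when $F=X$. Inapplicability of that proposition does not rule the case out; $\dim Y=0$ (equivalently $\rho(X)=1$) is exactly where both $\pr^n$ with a quadric and $\Q^n$ with a hyperplane section live. Your closing ``skeleton'' (item (iii): use Kobayashi--Ochiai when $\dim Y=0$) contradicts the main body and correctly guesses the paper's route, but the argument as written never carries it out.

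The correct dichotomy, which the paper uses, is on $\rho(X)$: if $\rho(X)=1$ then Proposition \ref{rhoonefano} makes $X$ Fano of pseudoindex $>n-1$, and \cite{CMSB}/\cite{KO} give $X\simeq\pr^n$ (with $D\in|\sO(2)|$) or $X\simeq\Q^n$ (with $D\in|\sO(1)|$, using $(D\cdot C)=1$ from Lemma \ref{longray}\eqref{longray1} since then $l(R)=n=\iota+1$); if $\rho(X)\geq 2$ then $\dim Y\geq 1$, Wi\'sniewski's inequality forces every fiber to have dimension $n-1$ and $l(R)=n$, whence a $\pr^{n-1}$-bundle over $\pr^1$, and Corollary \ref{scroll_cor1}\eqref{scroll_cor11} forces $n=3$ and Example \ref{2rminusone}. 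Your treatment of the $\dim Y=1$ branch essentially matches this last part, but your attempted workaround for $n\geq 4$ (running the other extremal ray, guessing $\pi_*\sO_X(D)$) is both unnecessary and internally inconsistent, since you had just shown that branch cannot occur for $n\geq 4$; the right inference there is simply that $\rho(X)=1$, returning you to the Kobayashi--Ochiai case you had discarded.
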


\begin{proof}
Let $R$ be an extremal ray with a minimal rational curve $[C]\in R$ 
such that $(D\cdot R)>0$. Then $l(R)\geq n$ holds by Lemma \ref{longray}. 
Let $\pi\colon X\rightarrow Y$ be the contraction morphism associated to $R$. 
By Wi\'sniewski's inequality (Theorem \ref{wisn_ineq}), 
\[
\dim\Exc(\pi)+\dim F\geq n+l(R)-1\geq 2n-1
\]
holds for any nontrivial fiber $F$ of $\pi$. Hence $\pi$ is of fiber type, 
that is, $X=\Exc(\pi)$ holds. 

If $\rho(X)=1$, then $X$ itself Fano manifold and the Fano pseudoindex of $X$ is 
larger than $n-1$ by Proposition \ref{rhoonefano}. 
If $\rho(X)=1$ and the Fano pseudoindex of $X$ is at least $n+1$, 
then $X\simeq\pr^n$ by \cite{CMSB} and $D\in|\sO(2)|$ holds. 
If $\rho(X)=1$ and the Fano pseudoindex is equal to $n$, then $(D\cdot C)=1$ 
by Lemma \ref{longray} \eqref{longray1}. 
Therefore $X\simeq\Q^n$ and $D\in|\sO(1)|$ by \cite{KO}. 

We consider the remaining case $\rho(X)\geq 2$. 
Then $\dim F=n-1$ for any fiber $F$ of $\pi$ and $l(R)=n$. 
Hence $(D\cdot C)=1$ holds by 
Lemma \ref{longray} \eqref{longray1}. 
Therefore $\pi$ is a $\pr^{n-1}$-bundle over a smooth projective curve $Y$ 
by \cite[Theorem 2]{fujita}. 
Then $Y\simeq\pr^1$ since any extremal ray of $X$ is 
spanned by a class of rational curve 
(this can also be shown by Theorem \ref{rat_conn}). Hence we can assume 
$X=\pr[\pr^1; a_0,\dots,a_{n-1}]$, where $0=a_0\leq a_1\leq\dots\leq a_{n-1}$. 
Thus $1\geq n-2$ by Corollary \ref{scroll_cor1}. 
Since $n\geq 3$, we have $n=3$, $a_1=0$ and 
$D\in|\sO(-a_2; 1)|$ by Corollary \ref{scroll_cor1} (1). 
That is exactly the case which we have considered in Example \ref{2rminusone} 
for the case $\iota=2$. 
\end{proof}


\subsection{Log Fano manifolds related to Mukai conjecture}\label{mukaiconjsection}


For a classification problem of Fano manifolds, it is very famous 
so called the Mukai conjecture \cite[Conjecture 4]{mukaiconj} such that 
the product of the Picard number and the 
Fano index minus one is less than or equal to the dimension for any Fano manifold. 
The case where the Fano index is larger than the half of the dimension is treated 
by Wi\'sniewski \cite{wisn90, wisn91}. 
We consider the log version, which are the main results in this article. 

\begin{thm}\label{mukai0}
Let $(X,D)$ be an $n$-dimensional log Fano manifold 
with the log Fano pseudoindex $\iota>n/2$, $D\neq 0$ and $\rho(X)\geq 2$. 
Then $n=2\iota-1$, and 
$(X, D)$ is isomorphic to the case 
in Example \ref{2rminusone} in Section \ref{ex_section}. 
\end{thm}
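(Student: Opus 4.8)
The plan is to exploit the ``special'' extremal ray produced by Lemma \ref{longray}. Since $D\neq 0$, there exists an extremal ray $R\subset\NE(X)$ with $(D\cdot R)>0$; let $\pi\colon X\rightarrow Y$ be its contraction. By Lemma \ref{longray}, $R$ is $K_X$-negative with $l(R)\geq\iota+1$. First I would run Wi\'sniewski's inequality (Theorem \ref{wisn_ineq}) on a nontrivial fiber $F$: combined with $(D\cdot R)>0$ (so $D\cap F\neq\emptyset$), one gets
\[
\dim(D\cap F)\geq\dim F-1\geq\dim X-\dim\Exc(\pi)+l(R)-2\geq\iota-1>n/2-1,
\]
which is exactly the estimate appearing in Proposition \ref{structure_cor}. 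The point of the hypothesis $\iota>n/2$ is that $\iota-1\geq n-\iota$, so the fibers of $\pi$ are ``large'' relative to $n$; I expect this to force $\pi$ to be of fiber type and in fact a projective bundle.

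Next I would split into cases on $\rho(X)$. If $\rho(X)=1$ we contradict the hypothesis $\rho(X)\geq 2$ (or, more precisely, this case simply does not occur under our assumptions), so assume $\rho(X)\geq 2$; then $\pi$ cannot contract everything to a point, and the fiber-dimension bound above, together with $\dim\Exc(\pi)=\dim X$, pins down $\dim F=\iota$ for every fiber, $l(R)=\iota+1$, and hence $(D\cdot C)=1$ for a minimal rational curve $C$ of $R$ by Lemma \ref{longray}\eqref{longray1}. This is precisely the hypothesis of Proposition \ref{structure_cor}\eqref{structure_cor1}, which then tells us that $\pi\colon X\rightarrow Y$ is a $\pr^\iota$-bundle and $\pi|_D\colon D\rightarrow Y$ is a $\pr^{\iota-1}$-subbundle. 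In particular $\dim Y=n-\iota$, and since $D$ surjects onto $Y$ with $(\pi|_D)$ having a section structure coming from the subbundle, $D$ is irreducible (being a $\pr^{\iota-1}$-bundle over the irreducible $Y$); combined with $n\geq\dim X=\dim Y+\iota$ and $\iota>n/2$ one extracts $n-\iota<\iota$, i.e.\ $\dim Y\leq\iota-1$.

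Then I would identify $Y$. Since $D$ is an $(n-1)$-dimensional snc Fano variety (here log Fano manifold) whose log Fano pseudoindex is $\geq\iota$ by Theorem \ref{fujinothm}\eqref{fujinothm1}, and $D\simeq\pr_Y(\mathcal{E})$ for a rank-$\iota$ bundle $\mathcal{E}$ on $Y$, restricting the pseudoindex bound to a line in a fiber of $\pi|_D$ forces $\dim Y=\iota-1$ exactly (otherwise the fiber dimension of $D\to Y$ is too small), hence $n=2\iota-1$. Next, applying Proposition \ref{indn} (or rather the rationality/Fano-index considerations for $Y$ via the structure of $-(K_X+D)$ restricted to sections) I would show $Y\simeq\pr^{\iota-1}$; concretely, $D\simeq\pr_Y(\mathcal{E})$ is itself a log Fano manifold of dimension $2\iota-2$ with pseudoindex $\geq\iota>\dim D/2$, and a short induction (or direct appeal to Wi\'sniewski's classification in the $D=0$ bundle case, since the conductor of $D$ as a stratum may vanish) gives $Y\simeq\pr^{\iota-1}$. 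Finally, with $X=\pr_{\pr^{\iota-1}}(\mathcal{E})$ for $\mathcal{E}$ of rank $\iota+1$ and $D\in|\sO_{\pr}(1)\otimes p^*\mathcal{L}|$ the $\pr^{\iota-1}$-subbundle, I would use Lemma \ref{lemP} together with the amplitude of $-(K_X+D)\sim\iota L$ and Lemma \ref{scroll_lem} to normalize $\mathcal{E}\simeq\sO^{\oplus\iota}\oplus\sO(m)$ with $m\geq 0$, landing exactly in Example \ref{2rminusone}; the matching of the embedding $D\subset X$ with the canonical projection $\pr[\pr^{\iota-1};0^\iota]\cansub\pr[\pr^{\iota-1};0^\iota,m]$ follows from Lemma \ref{scroll_lem}\eqref{scroll_lem6} (for $m>0$) and the automorphism argument (for $m=0$).

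The main obstacle I anticipate is the step showing $Y\simeq\pr^{\iota-1}$ and, hand in hand with it, the normalization of the bundle $\mathcal{E}$ on $X$: one must rule out all other rank-$(\iota+1)$ bundles on $\pr^{\iota-1}$ and all other bases, which requires carefully tracking how the conductor divisor of $D$ behaves under the induction (so that Theorem \ref{fujinothm} applies cleanly) and using positivity of $-(K_X+D)$ — via Lemma \ref{scroll_lem}\eqref{scroll_lem4} on the nef and effective cones — to kill the ``wrong'' splitting types. Everything else (the Wi\'sniewski estimate, the bundle structure, the final identification with Example \ref{2rminusone}) should be routine once Proposition \ref{structure_cor}\eqref{structure_cor1} is in hand.
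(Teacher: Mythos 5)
Your overall skeleton (the special ray from Lemma \ref{longray}, Wi\'sniewski's inequality, Proposition \ref{structure_cor}~\eqref{structure_cor1}, Lemma \ref{lemP}, and the normalization via Corollary \ref{scroll_cor1}) matches the back end of the paper's argument, but the middle of your proof has a genuine gap: you never establish the \emph{hypothesis} of Proposition \ref{structure_cor}~\eqref{structure_cor1}, namely the equality $\dim(D\cap F)=\iota-1$ for every nontrivial fiber $F$. Wi\'sniewski's inequality only gives the lower bound $\dim(D\cap F)\geq\dim F-1\geq\iota-1$; nothing in the numerology caps $\dim F$ at $\iota$ or $\dim(D\cap F)$ at $\iota-1$ (a priori $\pi$ could have jumping fibers, or $D$ could contain fibers of larger dimension). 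The needed upper bound can only come from first knowing the structure of $D$ and of $\pi|_D$. Relatedly, your claim that $D$ is irreducible is circular: you deduce it from the $\pr^{\iota-1}$-subbundle structure that you have not yet earned, and reducible boundaries genuinely occur in the neighbouring index range (cf.\ Examples \ref{pP}, \ref{kayaku} for $n=2r$), so irreducibility must be proved, not assumed.

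The paper closes both gaps at once by an induction on $n$ (base cases $n\leq 4$ via Proposition \ref{dP}) that your proposal does not set up. One picks an irreducible component $(D_1,E_1)\subset D$ with $(D_1\cdot R)>0$; Lemma \ref{rhoone}~\eqref{rhoone2} gives $\rho(D_1)\geq 2$, and Theorem \ref{fujinothm}~\eqref{fujinothm1} makes $(D_1,E_1)$ an $(n-1)$-dimensional log Fano manifold of pseudoindex $\iota'\geq\iota>(n-1)/2$. If $E_1\neq 0$, the induction hypothesis forces $n-1=2\iota'-1$, i.e.\ $\iota'=n/2<\iota\leq\iota'$, a contradiction; hence $E_1=0$, so $D=D_1$ is an honest Fano manifold with $\rho(D)\geq 2$ and pseudoindex $\geq\iota\geq(\dim D+2)/2$, and Occhetta's characterization of products of projective spaces \cite[Corollary 4.3]{Occ} yields $n=2\iota-1$ and $D\simeq\pr^{\iota-1}\times\pr^{\iota-1}$. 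Only now does one see that $\pi|_D$ is the first projection (it is an algebraic fiber space by Lemma \ref{longray}, neither finite nor birational), whence $\dim(\pi^{-1}(y)\cap D)=\iota-1$ for all $y$ and $Y\simeq\pr^{\iota-1}$ --- exactly the two facts you were missing, and the step you flagged as the ``main obstacle'' (identifying $Y$ and the splitting type) disappears. The remainder of your argument (splitting of the extension since $h^1(\pr^{\iota-1},\sO(m)^{\oplus\iota})=0$ for $\iota-1\geq 2$, and $m\geq 0$ from Corollary \ref{scroll_cor1}~\eqref{scroll_cor11}) agrees with the paper.
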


\begin{proof}
We prove the theorem by induction on $n$. The cases $n\leq 4$ have been done 
in Proposition \ref{dP}, thus we may assume $n\geq 5$. 

Choose an extremal ray $R$ with a minimal rational curve $[C]\in R$ 
as in Lemma \ref{longray} and let $\pi\colon X\rightarrow Y$ be 
the associated contraction. 
Then $l(R)\geq\iota+1\geq 4$. 
We also choose an irreducible component with the conductor divisor 
$(D_1, E_1)\subset D$ such that $(D_1\cdot R)>0$. 
We know that $\rho(D_1)\geq 2$ by Lemma \ref{rhoone} \eqref{rhoone2}, and 
$D$ is an $(n-1)$-dimensional snc Fano variety 
whose snc Fano pseudoindex $\geq\iota$ 
by Theorem \ref{fujinothm} \eqref{fujinothm1}. Thus $(D_1, E_1)$ is an 
$(n-1)$-dimensional log Fano manifold whose log Fano pseudoindex $\geq\iota$, 
$\rho(D_1)\geq 2$ and 
$\iota>n/2>(n-1)/2$. 
Hence $E_1=0$ (hence $D=D_1$) by induction step. 
Applying \cite[Corollary 4.3]{Occ} to $D$, 
we have $n-1=2(\iota-1)$ and $D\simeq\pr^{\iota-1}\times\pr^{\iota-1}$. 

We know in Lemma \ref{rhoone} \eqref{rhoone2} that $\pi|_D$ contracts a curve. 
Since $D\simeq\pr^{\iota-1}\times\pr^{\iota-1}$, 
$\pi|_D\colon D\rightarrow\pi(D)$ is not birational. Thus 
$\pi\colon X\rightarrow Y$ is of fiber type by Lemma \ref{rhoone} \eqref{rhoone1}, 
and $\pi|_D\colon D\rightarrow Y$ is surjective since $(D\cdot R)>0$. 
We know that $\pi|_D\colon D\rightarrow Y$ is an algebraic fiber space 
by Lemma \ref{longray}. 
Hence $\pi|_D$ is isomorphic to the first projection 
\[
p_1\colon\pr^{\iota-1}\times\pr^{\iota-1}\rightarrow\pr^{\iota-1}.
\]
In particular, $\dim(\pi^{-1}(y)\cap D)=\iota-1$ for any closed point 
$y\in Y\simeq\pr^{\iota-1}$. 
Therefore, $\pi\colon X\rightarrow Y$ is a 
$\pr^\iota$-bundle and $\pi|_D\colon D\rightarrow Y$ is a 
$\pr^{\iota-1}$-subbundle by Proposition \ref{structure_cor} \eqref{structure_cor1}. 

Since $D\simeq\pr^{\iota-1}\times\pr^{\iota-1}$, there exists an integer $m\in\Z$ such that 
$(\pi|_D)_*\sN_{D/X}\simeq\sO_{\pr^{\iota-1}}(-m)^{\oplus\iota}$ by Lemma \ref{lemP}. 
We also know by Lemma \ref{lemP} that $X\simeq\pr_{\pr^{\iota-1}}(\pi_*\sO_X(D))$ 
and the embedding $D\subset X$ is obtained by the surjection $\alpha$ 
in the natural exact sequence 
\[
0\rightarrow\sO_{\pr^{\iota-1}}\rightarrow\pi_*\sO_X(D)\xrightarrow{\alpha}
(\pi|_D)_*\sN_{D/X}\rightarrow 0.
\]
Since $\iota-1=(n+1)/2-1\geq 2$, this exact sequence always splits. Hence we have 
$\pi_*\sO_X(D)\simeq\sO_{\pr^{\iota-1}}\oplus\sO_{\pr^{\iota-1}}(-m)^{\oplus\iota}$ and 
$D\subset X$ is 
obtained by the canonical projection 
\[
\sO_{\pr^{\iota-1}}\oplus\sO_{\pr^{\iota-1}}(-m)^{\oplus\iota}\rightarrow
\sO_{\pr^{\iota-1}}(-m)^{\oplus\iota}.
\] 
This case has been already considered in Corollary \ref{scroll_cor1} \eqref{scroll_cor11}; 
$m\geq 0$ holds. This is exactly the case which we have been considered in 
Example \ref{2rminusone}. Therefore we have completed the proof of 
Theorem \ref{mukai0}.
\end{proof}

We recall Wi\'sniewski's classification result. 

\begin{thm}[{\cite{wisn91}}]\label{wisn_classify}
Let $X$ be an $n$-dimensional Fano manifold with the Fano index $r$. 
If $n=2r-1$ and $\rho(X)\geq 2$, then $X$ is isomorphic to one of the following: 
\begin{enumerate}
\renewcommand{\theenumi}{\roman{enumi}}
\renewcommand{\labelenumi}{\rm{\theenumi)}}
\item\label{wisn_classify1}
$\pr^{r-1}\times\Q^r,$
\item\label{wisn_classify1}
$\pr_{\pr^r}(T_{\pr^r}),$
\item\label{wisn_classify1}
$\pr[\pr^r; 0^{r-1}, 1].$
\end{enumerate}
\end{thm}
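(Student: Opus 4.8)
The plan is to combine the arithmetic of the length $l(R)$ and the index $r$ with Wi\'sniewski's inequality (Theorem~\ref{wisn_ineq}) and the structure theory of extremal contractions, exactly the circle of ideas used in Section~\ref{extcont_section}. Write $-K_X\sim rH$ with $H$ an ample Cartier divisor not further divisible, and note $r\ge 2$ (since $r=1$ forces $n=1$, $\rho(X)=1$). For any $K_X$-negative extremal ray $R$ with minimal rational curve $C$ we have $l(R)=(-K_X\cdot C)=r\,(H\cdot C)$, hence $r\mid l(R)$ and $l(R)\ge r$; on the other hand $l(R)\le n+1=2r$, and $l(R)=2r$ would force $X\cong\pr^n$ by \cite{CMSB}, contradicting $\rho(X)\ge 2$. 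So \emph{every} extremal ray of $X$ has length exactly $r$.

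Next I would analyse the contraction $\pi\colon X\to Y$ of an extremal ray $R$. By Wi\'sniewski's inequality a nontrivial fibre $F$ satisfies $\dim F\ge\dim X-\dim\Exc(\pi)+l(R)-1$, so $\dim F\ge r-1$ if $\pi$ is of fiber type and $\dim F\ge r$ if $\pi$ is birational. When $\pi$ is birational, \cite[Theorem 4.1]{AW} (with $l(R)=r$, as in the proof of Proposition~\ref{structure_cor}~\eqref{structure_cor2a}) shows it is the blow-up of a smooth $(r-2)$-fold of codimension $r+1$ in a smooth projective $(2r-1)$-fold; since $X$ is a Mori dream space (Theorem~\ref{dream}) the target is again a Fano $(2r-1)$-fold of index $\ge r$ with smaller Picard number, and a check of which such blow-ups remain Fano shows that $X$ must also admit a fiber-type contraction (for instance the blow-up of $\pr^{2r-1}$ along a linear $\pr^{r-2}$ is $\pr[\pr^r;0^{r-1},1]$, which is a $\pr^{r-1}$-bundle over $\pr^r$). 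So I may assume $\pi$ is of fiber type.

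A general fibre $F$ is then a smooth Fano manifold with $-K_F\sim r(H|_F)$, hence of index and pseudoindex $\ge r$; combined with $\dim F\ge r-1$ (so $\dim Y\le r$) and $\dim Y\ge1$, the classification of Fano manifolds of large index and pseudoindex (\cite{KO,CMSB,fujitabook} and related results) forces $(F,H|_F)\cong(\pr^{r-1},\sO(1))$ with $\dim Y=r$, or $(F,H|_F)\cong(\Q^r,\sO(1))$ with $\dim Y=r-1$. In the first case $\pi$ is a $\pr^{r-1}$-bundle over the smooth $r$-fold $Y$ by \cite[Lemma 2.12]{fujita}; in the second it is a $\sQ^r$-bundle over the smooth $(r-1)$-fold $Y$ by \cite[Theorem B]{ABW}. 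Since $\pi$ is elementary, $\rho(Y)=\rho(X)-1$, and $Y$ is a rationally connected (Theorem~\ref{rat_conn}) Fano manifold; a short argument with minimal rational curves — reading the length $l(R)=r$ off the bundle structure and playing it against the classification of low-dimensional Fano manifolds — then yields $\rho(X)=2$, $\Pic(Y)=\Z$, and $Y\cong\pr^r$, resp.\ $Y\cong\pr^{r-1}$, by \cite{KO}.

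Finally, with $Y$ a projective space I would pin down the bundle. If $\pi\colon X\to\pr^r$ is a $\pr^{r-1}$-bundle, write $X\cong\pr_{\pr^r}(\sE)$ with $\sE$ normalized of rank $r$; ampleness of $\sO_X(-K_X)$ together with the requirement that $-K_X$ be $r$ times a primitive class pins down $c_1(\sE)=1$ and leaves only $\sE\cong\sO^{\oplus(r-1)}\oplus\sO(1)$, whence $X\cong\pr[\pr^r;0^{r-1},1]$, or $\sE\cong T_{\pr^r}(-1)$, whence $X\cong\pr_{\pr^r}(T_{\pr^r})$ — the two being distinguished by whether the other extremal contraction of $X$ is birational or again a $\pr^{r-1}$-bundle. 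If $\pi\colon X\to\pr^{r-1}$ is a $\sQ^r$-bundle, Lemma~\ref{lemQ} realizes $X$ as a relative quadric in a $\pr^{r+1}$-bundle over $\pr^{r-1}$, and the index constraint forces that bundle — and hence $X$ — to be trivial, so $X\cong\pr^{r-1}\times\Q^r$. The hard part is the previous paragraph: excluding fibrations whose general fibre has dimension $>r$, ruling out the pathological birational contractions, and transporting the numerical data from $X$ down to $Y$ so as to get $\rho(X)=2$ and $Y\cong\pr^r$ or $\pr^{r-1}$; by comparison the final identification of the bundle is routine bookkeeping.
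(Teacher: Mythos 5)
The paper offers no proof of Theorem~\ref{wisn_classify}: it is quoted verbatim from Wi\'sniewski's article \cite{wisn91}, so there is nothing internal to compare your argument against, and I have to judge the sketch on its own. Your opening arithmetic is correct and matches how the paper uses such results elsewhere: $r\ge 2$, $r\mid l(R)$, $l(R)\le n+1=2r$ with equality forcing $X\cong\pr^n$, hence every ray has length exactly $r$, and Theorem~\ref{wisn_ineq} gives $\dim F\ge r-1$ in the fiber-type case and $\dim F\ge r$ in the divisorial case. The final identification of the bundle over a projective-space base is also essentially right. The trouble is that the middle of the argument --- which is the actual content of Wi\'sniewski's theorem --- is asserted rather than proved, at three points.

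First, in the fiber-type case the index computation on a general fibre $F$ only yields $\dim F\ge r-1$; it does not exclude $\dim F=r+1,\dots,2r-2$. For $r\ge 3$ a general fibre of dimension $r+1$ with $-K_F\sim rH|_F$ is a del Pezzo manifold in Fujita's sense; such $F$ exist and contain lines, so neither index nor pseudoindex rules them out. Excluding $\dim Y\le r-2$ requires the lemma that two nontrivial fibres $F_1,F_2$ of \emph{distinct} extremal rays which meet satisfy $\dim(F_1\cap F_2)\ge\dim F_1+\dim F_2-n$ yet can contain no curve; this two-ray argument is the engine of Wi\'sniewski's proof and appears nowhere in your sketch. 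Second, the same omission affects the step ``$\rho(X)=2$ and $Y\cong\pr^r$ resp.\ $\pr^{r-1}$'', where you also tacitly assume $Y$ is Fano --- not automatic for the image of an extremal contraction. Third, in the birational case the claim that the blow-down target is Fano is unjustified: one must verify that $-K_Y$ is positive on curves \emph{inside} the centre $W$, exactly the difficulty the paper confronts in Claim~\ref{cont_ample} using the special geometry available there; and ``a check of which such blow-ups remain Fano'' is precisely the classification you are deferring. A cleaner route for this last point: since the exceptional divisor $E_1$ is effective and negative on its own ray, some other ray $R_2$ satisfies $(E_1\cdot R_2)>0$, so every nontrivial fibre of $\cont_{R_2}$ meets a fibre of $\cont_{R_1}$ of dimension $\ge r$; if $R_2$ were also divisorial its fibres would have dimension $\ge r$ and the intersection would contain a curve contracted by both rays, which is absurd. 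Hence a fiber-type ray always exists and the divisorial case can be folded into the fiber-type analysis, which is where the real work remains.
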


Using Theorems \ref{mukai0}, \ref{wisn_classify}, 
we classify $(X, D)$ a $2r$-dimensional log Fano manifold 
with the log Fano index $r\geq 2$ 
and $D\neq 0$. We note that the case $r=1$ has been classified by Maeda 
\cite[\S 3]{Maeda}.

\begin{thm}[Main Theorem]\label{mukai1}
Let $(X, D)$ be a $2r$-dimensional log Fano manifold with the log Fano index $r\geq 2$,  $D\neq 0$ and $\rho(X)\geq 2$. 
Then the possibility of $X$ and $D$ is exactly  in the Examples 
\ref{burouappu}, \ref{pP}, \ref{kayaku}, \ref{fanoQ}, \ref{rthree}, 
\ref{rtwo}, \ref{Tp}, \ref{Pp}, \ref{zerozeroone}, \ref{zeroonebig}, 
\ref{zerozerobig} 
(See Table \ref{maintable} for the list of $X$ and $D$).
\end{thm}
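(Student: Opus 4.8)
The plan is to run the single‑contraction argument used for Theorem \ref{mukai0} again, but this time keeping all of the options in Proposition \ref{structure_cor} open, since with only the log Fano index $r$ available we no longer have a large pseudoindex to collapse the possibilities.

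\emph{Setting up the component $D_1$.} First I would observe that the log Fano pseudoindex $\iota$ of $(X,D)$ is divisible by $r$ and, by Proposition \ref{indn}, at most $2r$; since $\iota=2r$ would force $X\simeq\pr^{2r}$ with $\rho(X)=1$, we get $\iota=r$. By Lemma \ref{longray} choose an extremal ray $R$ with $(D\cdot R)>0$, let $\pi\colon X\to Y$ be its contraction, so $l(R)\geq r+1$ and $\rho(Y)=\rho(X)-1\geq 1$, and choose an irreducible component $(D_1,E_1)\subset D$ with $(D_1\cdot R)>0$. By Lemma \ref{rhoone}\eqref{rhoone2}, $\rho(D_1)\geq 2$; by Theorem \ref{fujinothm}\eqref{fujinothm1} and Proposition \ref{indn} applied to $D_1$, the pair $(D_1,E_1)$ is a $(2r-1)$-dimensional log Fano manifold of log Fano index exactly $r$. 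As $r>(2r-1)/2$, Theorem \ref{mukai0} applies when $E_1\neq 0$ and yields $(D_1,E_1)\simeq(\pr[\pr^{r-1};0^r,m'],\pr^{r-1}\times\pr^{r-1})$ for some $m'\geq 0$, while when $E_1=0$, Wi\'sniewski's Theorem \ref{wisn_classify} yields $D_1\in\{\pr^{r-1}\times\Q^r,\ \pr_{\pr^r}(T_{\pr^r}),\ \pr[\pr^r;0^{r-1},1]\}$. In each case $D_1$ admits an essentially unique fibration of the shape provided by Proposition \ref{structure_cor}, so once the type of $\pi$ is known the base $Y$ — and its extremely restricted geometry ($\pr^{2r}$, $\pr^{r-1}$, $\pr^r$, $\Q^r$, or, only when $r=2$, $\pr^1\times\pr^1$) — is read off at once.

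\emph{The structure of $\pi$.} Next I would check, using that $(D\cdot C)\in\{1,2\}$ by Lemma \ref{longray}\eqref{longray1},\eqref{longray2} for a minimal rational curve $C$ of $R$, that $l(R)\in\{r+1,r+2\}$ and that $\dim(D_1\cap F)$ takes the same value ($r-1$ or $r$) on every nontrivial fibre $F$ of $\pi$. If $\dim(D\cap F)=r-1$ for all $F$, Proposition \ref{structure_cor}\eqref{structure_cor1} makes $\pi$ a $\pr^r$-bundle and $\pi|_D$ a $\pr^{r-1}$-subbundle; cross‑referencing the list for $D_1$ forces $Y\in\{\pr^r,\Q^r,\pr^1\times\pr^1\}$, and Lemma \ref{lemP} recovers $X$ as $\pr_Y(\pi_*\sO_X(D))$ from the canonical extension of $(\pi|_{D_1})_*\sN_{D_1/X}$ by $\sO_Y$, which is controlled by $D_1$; Corollaries \ref{scroll_cor1} and \ref{scroll_cor2} then pin down the remaining twist, giving Examples \ref{rthree}, \ref{rtwo}, \ref{Tp}, \ref{Pp}, \ref{zerozeroone}, \ref{zeroonebig}, \ref{zerozerobig}. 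If instead $\dim(D_1\cap F)=r$, Proposition \ref{structure_cor}\eqref{structure_cor2} offers exactly four possibilities: $\pi$ a blow‑up along a smooth codimension‑$(r+2)$ centre; a $\sQ^{r+1}$-bundle with $\sQ^r$-subbundle $D_1$ (handled by Lemma \ref{lemQ}); a $\pr^{r+1}$-bundle with $\pr^r$-subbundle $D_1$; or a $\pr^{r+1}$-bundle $\pr_Y(\pi_*L)$ in which $D_1$ is a relative quadric. In each, the identification of $D_1$ forces $Y$ (namely $\pr^{2r}$ or $\pr^{r-1}$ according to the case), after which reconstructing $(X,D)$ and matching the numerical data yields Examples \ref{burouappu}, \ref{pP}, \ref{kayaku}, \ref{fanoQ}; here I would invoke Remark \ref{fanoQrmk} for the double‑cover example and the ampleness constraints (Remarks \ref{rthreermk}, \ref{Tprmk}, Corollaries \ref{scroll_cor1}, \ref{scroll_cor2}) to cut down the parameters. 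The converse — that each tabulated family really satisfies the hypotheses — is what Section \ref{ex_section} does.

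\emph{Expected main obstacle.} No single step is hard; the difficulty is the bookkeeping of the second paragraph: verifying the uniform‑fibre‑dimension claim, running through every pairing of ``$\pi$-type'' with ``$D_1$-type'' and showing each either produces exactly one tabulated example or is impossible (because $\rho(X)\geq 2$ fails, because $-(K_X+D)$ cannot be ample, or because $D_1$ carries no fibration of the required shape), and, in the reducible‑$D$ subcases, pinning down the embedding $E_1\subset D_1$ and hence the second component of $D$. The blow‑up case additionally requires recognising the target as $\pr^{2r}$ and the centre as a linear $\pr^{r-2}$, which rests on the explicit contraction theorems of \cite{AW,ABW} quoted inside Proposition \ref{structure_cor}.
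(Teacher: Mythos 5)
Your plan coincides with the paper's proof of Theorem \ref{mukai1}: identify the component $(D_1,E_1)$ via Theorem \ref{mukai0} and Wi\'sniewski's classification, list the possible algebraic fibre space structures of $\pi|_{D_1}$, feed these (via the resulting fibre dimensions $r-1$ or $r$) into Proposition \ref{structure_cor}, and reconstruct $X$ with Lemmas \ref{lemP}, \ref{lemQ}, \ref{PQlem} and Corollaries \ref{scroll_cor1}, \ref{scroll_cor2}. The only steps your sketch underestimates are the $\sQ^{r+1}$-bundle case, where proving the twist satisfies $m\geq 0$ needs a separate argument (a second extremal ray when $r=2$, a Cox-ring computation when $r\geq 3$) not supplied by the ampleness remarks you cite, and the blow-up case, where recognising $Y\simeq\pr^{2r}$ goes through showing that $(Y,\pi(D))$ is again a log Fano pair with $\rho(Y)=1$ rather than through \cite{AW,ABW}.
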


We prove Theorem \ref{mukai1} in Section \ref{prf_section}.


\subsection{Classification of Mukai-type log Fano manifolds}\label{mukaisection}


\begin{corollary}\label{coindex3}
We have classified $n$-dimensional log Fano manifolds $(X, D)$ with the 
log Fano indices $r\geq n-2$.
\end{corollary}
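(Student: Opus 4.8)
The plan is to derive this purely by combining the results already proved in the paper with the classical classification of Fano manifolds of large Fano index, organised by a case division on whether $D=0$ and, when $D\neq 0$, on the value of $\rho(X)$. First I would treat $D=0$: then $(X,D)$ is just a Fano manifold $X$ with $-K_X$ divisible by some $r\geq n-2$, i.e.\ with Fano index $\geq n-2$, and such $X$ are classified by the work recalled in the introduction (\cite{KO} for index $\geq n$, \cite{fujitabook,isk77} for the del Pezzo case index $n-1$, and \cite{MoMu,mukai,wis,wisn90,wisn91} for index $n-2$).

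Now assume $D\neq 0$. Then by Proposition \ref{indn} the log Fano pseudoindex $\iota$ of $(X,D)$ satisfies $\iota\leq n$, while $\iota\geq r$ by Remark \ref{pseudo_geq}; hence $n-2\leq r\leq n$, that is $n\in\{r,r+1,r+2\}$. Next I would dispose of the case $\rho(X)=1$: here Proposition \ref{rhoonefano} shows that $X$ is a Fano manifold whose Fano index exceeds $r$, hence is $\geq n-1$, so by \cite{KO} (together with the del Pezzo classification \cite{fujitabook,isk77}) $X$ is $\pr^n$, $\Q^n$, or a del Pezzo manifold with $\rho(X)=1$. Since $\Pic(X)$ is then $\Z$ and torsion free (Lemma \ref{picard}), writing $D$ and $-(K_X+D)\sim rL$ as multiples of the ample generator and using that $-(K_X+D)$ is ample while $r\geq n-2$ leaves only finitely many possibilities ($D\in|\sO(1)|,|\sO(2)|,|\sO(3)|$ on $\pr^n$; $D\in|\sO(1)|,|\sO(2)|$ on $\Q^n$; $D\in|H|$ on a del Pezzo manifold), and the reduced simple normal crossing members of these systems are exactly the objects of the classification.

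It remains to handle $\rho(X)\geq 2$ with $D\neq 0$. If $r=1$, the constraint $r\geq n-2$ forces $n\leq 3$ and log Fano manifolds of dimension at most three were classified by Maeda \cite{Maeda} (the one-dimensional case cannot occur as $\rho(\pr^1)=1$; dimension $\leq 2$ is also covered there). If $r\geq 2$, then from $n\leq r+2$ and $r\geq 2$ we get $n\leq 2r$, and from the paragraph above $n\geq r$. For $n=2r$ the classification is exactly Theorem \ref{mukai1} (see Table \ref{maintable}). For $r\leq n\leq 2r-1$ we have $\iota\geq r>n/2$, so Theorem \ref{mukai0} applies and identifies $(X,D)$ with the member of Example \ref{2rminusone}; since $r\mid\iota$ and $\iota\leq n\leq 2r-1<2r$ necessarily $\iota=r$ and $n=2r-1$ (and the subcase $r=2$, $n=3$ recovers the exceptional threefold of Proposition \ref{dP}). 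Assembling all the cases gives the corollary.

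I expect the bulk of the genuine work to be concentrated in the $\rho(X)=1$ branch, where one must verify that ampleness of $-(K_X+D)$ together with $r\geq n-2$ and primitivity of the ample generator really confine $D$ to the asserted linear systems, and that only the reduced simple normal crossing members survive; the remaining obstacle is purely one of bookkeeping, namely checking that the hypothesis $n-2\leq r$, matched against the ranges of Proposition \ref{dP}, Theorem \ref{mukai0}, Theorem \ref{mukai1}, Maeda's theorem and the Fano classification, leaves no value of $(n,r)$ unaccounted for.
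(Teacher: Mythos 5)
Your proposal is correct and follows essentially the same route as the paper: split on $D=0$ versus $D\neq 0$, handle $\rho(X)=1$ via Proposition \ref{rhoonefano} and the Kobayashi--Ochiai/Fujita classification, invoke Maeda for the low-dimensional $r=1$ case, and reduce everything else to Propositions \ref{indn}, \ref{dP} and Theorems \ref{mukai0}, \ref{mukai1}. The only cosmetic difference is that the paper first reduces to $r=n-2$ (deferring $r\geq n-1$ to Section \ref{delpezzo_section}), whereas you keep $r\geq n-2$ general and let the index bookkeeping absorb those cases; the content is identical.
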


\begin{proof}
We can assume $r=n-2$ since the cases $r\geq n-1$ have been treated 
in Section \ref{delpezzo_section}. 

The case $D=0$ is well-known, called as Mukai manifolds 
(see \cite{isk77,MoMu,mukai,wis,wisn90,wisn91}). 

The case $n=3$, $D\neq 0$ has been classified by Maeda \cite{Maeda}. 

The case $n\geq 4$, $D\neq 0$, $\rho(X)\geq 2$ is already known by 
Propositions \ref{indn}, \ref{dP} and Theorems \ref{mukai0}, \ref{mukai1}. 

The remaining case $n\geq 4$, $D\neq 0$, $\rho(X)=1$ is evident from 
Proposition \ref{rhoonefano}; $X$ and $D$ is isomorphic to one of the following: 
\begin{enumerate}
\renewcommand{\theenumi}{\Roman{enumi}}
\renewcommand{\labelenumi}{\rm{(\theenumi)}}
\item\label{MI}
$X\simeq\pr^n$ and $D\in|\sO(3)|$. 
\item\label{MII}
$X\simeq\Q^n$ and $D\in|\sO(2)|$.
\item\label{MIII}
$X\simeq V_d$ and $D\in|\sO(1)|$ with $1\leq d\leq 5$, 
where $V_d$ is a del Pezzo manifold of degree $d$ in the sense of Takao Fujita 
\cite[Theorem 8.11, 1)--5)]{fujitabook}, and $\sO(1)$ is the ample generator of 
$\Pic(V_d)$. 
\end{enumerate}

Conversely, we know that general elements in the linear systems of 
\eqref{MI}--\eqref{MIII} are smooth. Hence the cases \eqref{MI}--\eqref{MIII} 
actually occur.
\end{proof}


\section{Proof of Main Theorem \ref{mukai1}}\label{prf_section}

In this section, we give a proof of Main Theorem \ref{mukai1}.  

Let $L$ be an ample divisor on $X$ such that $-(K_X+D)\sim rL$. 
Pick an extremal ray $R$ with a minimal rational curve $[C]\in R$ 
such that $(D\cdot R)>0$ and let $\pi\colon X\rightarrow Y$ 
be the associated contraction morphism. 
Then we know that $l(R)\geq r+1\geq 3$ by Lemma \ref{longray}. 
We note that $D$ is a $(2r-1)$-dimensional snc Fano variety 
whose snc Fano index is divisible by $r$ 
by Theorem \ref{fujinothm} \eqref{fujinothm1}. 
Let $(D_1, E_1)\subset D$ be an irreducible component of $D$ with the conductor divisor 
such that $(D_1\cdot R)>0$. 
By the assumption $\rho(X)\geq 2$ and 
Lemma \ref{rhoone} \eqref{rhoone2}, the morphism 
$\pi|_{D_1}\colon D_1\rightarrow\pi(D_1)$ is not a finite morphism and 
$\rho(D_1)\geq 2$ holds. 
Since $(D_1, E_1)$ is a $(2r-1)$-dimensional log Fano manifold whose log Fano index is 
divisible by $r$, 
the possibility of the morphism $\pi|_{D_1}\colon D_1\rightarrow\pi(D_1)$ 
(which is an algebraic fiber space by Lemma \ref{longray}) is isomorphic to exactly 
one of the following list 
by Theorems \ref{mukai0} and \ref{wisn_classify}.
\begin{enumerate}
\renewcommand{\theenumi}{\arabic{enumi}}
\renewcommand{\labelenumi}{$(\theenumi)$}
\item\label{main1}
$\pr^{r-1}\times\Q^r\xrightarrow{p_1}\pr^{r-1}$, 
where $E_1=0$.
\item\label{main2}
$\pr[\pr^{r-1}; 0^r, m]\xrightarrow{p}\pr^{r-1}$, where $E_1\in|\sO(-m; 1)|$ 
with $m\geq 0$.
\item\label{main3}
$\pr^{r-1}\times\Q^r\xrightarrow{p_2}\Q^r$, 
where $E_1=0$.
\item\label{main4}
$\pr_{\pr^r}(T_{\pr^r})\xrightarrow{p}\pr^r$, 
where $E_1=0$.
\item\label{main5}
$\pr[\pr^{r}; 0^{r-1}, 1]\xrightarrow{p}\pr^r$, 
where $E_1=0$.
\item\label{main6}
$\pr^r\times\pr^{r-1}\xrightarrow{p_1}\pr^r$, 
where $E_1\in|\sO_{\pr^r\times\pr^{r-1}}(1, 0)|$ (the case in Theorem \ref{mukai0} with $m=0$).
\item\label{main7}
$\pr[\pr^{r-1}; 0^r, m]\xrightarrow{\phi}Z$, 
the divisorial contraction morphism contracting $E_1\simeq\pr^{r-1}\times\pr^{r-1}$ 
to $\pr^{r-1}$, where $m>0$. 
\item\label{main8}
$\Bl_{\pr^{r-2}}\pr^{2r-1}\xrightarrow{\Bl}\pr^{2r-1}$, the blowing up of $\pr^{2r-1}$ 
along a linear subspace $\pr^{r-2}$, 
where $E_1=0$. 
\end{enumerate}

\begin{remark}
For the cases \eqref{main1}, \eqref{main2} and \eqref{main8}, 
$\dim(F\cap D_1)=r$ for any nontrivial fiber $F$ of $\pi$. 
For the cases \eqref{main3}, \eqref{main4}, \eqref{main5}, \eqref{main6} and 
\eqref{main7}, $\dim(F\cap D_1)=r-1$ for any nontrivial fiber $F$ of $\pi$. 
\end{remark}

We separate the cases whether the contraction morphism $\pi$ 
is of fiber type (Section \ref{fiber_type}) or not (Section \ref{birational_type}).

\subsection{Fiber type case}\label{fiber_type}
Here, we consider the case where $\pi$ is of fiber type. 
Since $\dim F\geq l(R)-1\geq r\geq 2$ for any fiber $F$ of $\pi$, we have 
$\dim D_1>\dim Y$. Hence $\pi|_{D_1}$ is surjective and belongs to the cases 
\eqref{main1}--\eqref{main6} (we note that $\pi|_{D_1}$ is an algebraic fiber space by 
Lemma \ref{longray}).

\noindent\textbf{The cases \eqref{main1} and \eqref{main2}}

First, we consider the cases \eqref{main1} and \eqref{main2}. 
Then $\dim(\pi|_{D_1})^{-1}(y)=r$ for 
any closed point $y\in Y\simeq\pr^{r-1}$. Thus one of 
\eqref{structure_cor2b}, \eqref{structure_cor2c} 
or \eqref{structure_cor2d} in Proposition \ref{structure_cor} holds.

\underline{The case \eqref{main1}}

Since $\pi|_D$ is a $\sQ^r$-bundle over $Y\simeq\pr^{r-1}$, 
only the case \eqref{structure_cor2b} or \eqref{structure_cor2d} can occurs. 

First, we consider the case \eqref{structure_cor2b}. 
Since $D\simeq\pr^{r-1}\times\Q^r$, 
$\pi|_D$ is isomorphic to the first projection and 
we can write $\sN_{D/X}\simeq\sO_{\pr^{r-1}\times\Q^r}(-m,1)$ for some 
integer $m\in\Z$. Then 
$(\pi|_D)_*\sN_{D/X}\simeq\sO_{\pr^{r-1}}(-m)^{\oplus r+2}$ by Lemma \ref{PQlem}, 
and the sequence 
\[
0\rightarrow\sO_{\pr^{r-1}}\rightarrow\pi_*\sO_X(D)\xrightarrow{\alpha}
\sO_{\pr^{r-1}}(-m)^{\oplus r+2}\rightarrow 0
\]
is exact. 
Furthermore, $X$ is obtained as a smooth divisor belonging to 
$|p^*\sO_{\pr^{r-1}}(s)\otimes\sO_{\pr}(2)|$ in $\pr:=\pr_{\pr^{r-1}}(\pi_*\sO_X(D))$ 
for some $s\in\Z$, where $p\colon\pr\rightarrow\pr^{r-1}$ is the projection, 
$D$ is the complete intersection of $X$ with 
$H:=\pr[\pr^{r-1}; (-m)^{r+2}]$ in $\pr$. Here 
$H\subset\pr$ is the subbundle of $p$ 
obtained by the surjection $\alpha$ in the above exact sequence, by Lemma \ref{lemQ}. 
Under the isomorphism $H\simeq\pr^{r-1}\times\pr^{r+1}$, the divisor 
$D\simeq\pr^{r-1}\times\Q^r$ belongs to $|\sO_{\pr^{r-1}\times\pr^{r+1}}(s-2m, 2)|$. 
Thus $s\geq 2m$ since 
$h^0(\pr^{r-1}\times\pr^{r+1}, \sO_{\pr^{r-1}\times\pr^{r+1}}(t, 2))=0$ for any $t<0$. 
If $s>2m$, then the restriction homomorphism 
$\Pic(\pr^{r-1}\times\pr^{r+1})\rightarrow\Pic(D)$ is isomorphism by 
Lefschetz hyperplane theorem and 
$\sO_D(-K_D)\simeq\sO_{\pr^{r-1}\times\pr^{r+1}}(r-(s-2m), r)|_D$ by the 
adjunction theorem, but we know that $-K_D$ is divisible by $r$, 
which leads to a contradiction. 
Therefore we have $s=2m$. 

\begin{claim}\label{mclaim}
$m\geq 0$ holds. 
\end{claim}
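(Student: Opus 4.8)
The plan is to rule out the possibility $m<0$; once this is done the claim follows immediately. Assuming $m<0$, I would show that the smooth divisor $X\in|p^*\sO_{\pr^{r-1}}(2m)\otimes\sO_{\pr}(2)|$ sitting in $\pr=\pr_{\pr^{r-1}}(\pi_*\sO_X(D))$ is forced to be singular, contradicting that $(X,D)$ is a log manifold. The key is to control the global sections of this linear system through the structure of $\sE:=\pi_*\sO_X(D)$, using the exact sequence $0\to\sO_{\pr^{r-1}}\to\sE\to\sO_{\pr^{r-1}}(-m)^{\oplus(r+2)}\to0$ recorded above.

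First I would treat the case $r\geq 3$. Here this sequence splits, since $\Ext^1_{\pr^{r-1}}(\sO_{\pr^{r-1}}(-m)^{\oplus(r+2)},\sO_{\pr^{r-1}})\cong H^1(\pr^{r-1},\sO_{\pr^{r-1}}(m))^{\oplus(r+2)}=0$ because $r-1\geq 2$. Thus $\sE\cong\sO_{\pr^{r-1}}\oplus\sO_{\pr^{r-1}}(-m)^{\oplus(r+2)}$ and $\pr$ is the split scroll $\pr[\pr^{r-1};0,(-m)^{r+2}]$, with $-m>0$, in which $X$ belongs to $|\sO(2m;2)|$. Next I would compute
\[
H^0\bigl(\pr,\sO_{\pr}(2)\otimes p^*\sO_{\pr^{r-1}}(2m)\bigr)\cong H^0\bigl(\pr^{r-1},\ \sO_{\pr^{r-1}}(2m)\oplus\sO_{\pr^{r-1}}(m)^{\oplus(r+2)}\oplus\sO_{\pr^{r-1}}^{\oplus\binom{r+3}{2}}\bigr),
\]
and observe that for $m<0$ only the last summand contributes, so the defining equation of $X$ is a nonzero quadratic form in the fibre coordinates dual to the $\sO(-m)$-summands alone. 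Consequently $X$ contains the section $\pr[\pr^{r-1};0]\cansub\pr[\pr^{r-1};0,(-m)^{r+2}]$ attached to the projection $\sE\twoheadrightarrow\sO_{\pr^{r-1}}$, and along it both the equation and its differential vanish identically; hence $X$ is singular there. This contradiction gives $m\geq 0$ for $r\geq 3$. (The last step may instead be phrased via Lemma \ref{scroll_lem}~(7): for $m<0$ the class $\sO(2m;2)$ sits exactly on the boundary $2m=-a_t-a_{t-1}$ of the reduced range, and inspection of the admissible monomials shows every member is singular along this section.)

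The remaining case $r=2$ is the point I expect to require the most care, because the extension need not split when $m\leq-2$. There I would use instead that $\sE$ splits as a direct sum of line bundles on $\pr^1$ by Grothendieck's theorem, so that $\pr=\pr_{\pr^1}(\sE)$ is again a split scroll $\pr[\pr^1;a_0,\dots,a_4]$; combining $\sum a_i=-(r+2)m$ with the existence of the sub-bundle $\sO\hookrightarrow\sE$ and of the surjection $\sE\twoheadrightarrow\sO_{\pr^1}(-m)^{\oplus 4}$ pins down enough of the $a_i$, and Lemma \ref{scroll_lem}~(7) applied to the class of $X$ — which is $\sO(2m;2)$ in the tautological normalisation — again forces $X$, being smooth and hence reduced and irreducible, to be singular unless $m\geq 0$. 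Throughout, the conceptual content lies entirely in the split computation of the previous paragraph; only the $r=2$ bookkeeping is delicate.
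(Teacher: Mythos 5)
For $r\geq 3$ your argument coincides with the paper's: the extension splits because $H^1(\pr^{r-1},\sO(m))=0$, and for $m<0$ every equation of bidegree $(2m,2)$ on $\pr[\pr^{r-1};0,(-m)^{r+2}]$ is a constant-coefficient quadratic form in the fibre coordinates dual to the $\sO(-m)$-summands, hence vanishes to order two along the section $\pr[\pr^{r-1};0]$; the paper carries out exactly this Cox-ring/Jacobian computation.

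For $r=2$ you take a genuinely different route. The paper never touches the (possibly non-split) extension: it writes $\NE(X)=R+R'$, observes that every nontrivial fiber of the second contraction is one-dimensional, and disposes of the three cases $(D\cdot R')>0$, $<0$, $=0$ by fiber-dimension estimates as in Proposition \ref{structure_cor}. Your alternative via Grothendieck splitting of $\sE=\pi_*\sO_X(D)$ is viable, and it has the virtue of staying entirely inside the scroll picture, but the finishing step is not as automatic as you claim. Write $\sE\simeq\bigoplus_{i=0}^4\sO(c_i)$ with $\sum c_i=-4m$. The surjection $\sE\twoheadrightarrow\sO(-m)^{\oplus 4}$ with kernel $\sO$ forces $c_i\leq -m$ for every $i$: a summand $\sO(c_i)$ with $c_i>-m>0$ would have zero composite with the surjection and would therefore factor through the kernel $\sO$, which admits no nonzero map from $\sO(c_i)$. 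Given this, Lemma \ref{scroll_lem} (7) yields only $c_3+c_4\geq -2m$, i.e.\ the two largest twists equal $-m$ exactly; that is the boundary case of the lemma, not a contradiction. Nor does Lemma \ref{scroll_lem} (8) rescue you in general, because its hypothesis $a_{t-2}<a_t$ can fail: for $m=-2$ the bundle $\sE\simeq\sO(1)^{\oplus 2}\oplus\sO(2)^{\oplus 3}$ is a genuine extension of $\sO(2)^{\oplus 4}$ by $\sO$ with three top twists equal to $-m$. What does close the case is the same computation you used for $r\geq 3$: since every $c_i\leq -m$, a monomial $y_iy_jx^e$ of bidegree $(2m,2)$ must have $c_i=c_j=-m$ and $e=0$, so the equation of $X$ is a constant-coefficient quadric in the coordinates indexed by $T=\{i\mid c_i=-m\}$; as $\sum c_i=-4m<-5m$, the set $T$ is proper, the linear section $\{y_i=0\mid i\in T\}$ is nonempty, and $X$ is singular along it. So your strategy goes through once the appeal to Lemma \ref{scroll_lem} (7) is replaced by this direct Jacobian argument.
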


\begin{proof}
We first consider the case $r=2$. Since $\rho(X)=2$, we can write $\NE(X)=R+R'$ 
and let the contraction morphism associated to $R'$ be $\pi'\colon X\rightarrow Y'$. 
We note that any nontrivial fiber $F'$ of $\pi'$ satisfies $\dim F'=1$ since 
any curve in $F'$ never  be contracted by $\pi$. 
If $(D\cdot R')>0$, then any nontrivial fiber $F'$ of $\pi'$ satisfies $\dim F'\geq 2$ 
by the same argument in Proposition \ref{structure_cor}, this is a contradiction. 
If $(D\cdot R')<0$, then $\Exc(\pi')\subset D$. Hence $m>0$. 
If $(D\cdot R')=0$, then $R'$ is a $K_X$-negative extremal ray 
and $l(R')\geq 2$ by the same argument in Proposition \ref{structure_cor}. 
Hence $\pi'$ is of fiber type by Wi\'sniewski's inequality (Theorem \ref{wisn_ineq}). 
Thus $\pi'|_D$ is not a finite morphism since $(D\cdot R')=0$. Therefore $m\geq 0$. 

Now we consider the case $r\geq 3$. The above exact sequence always splits, 
hence $H=\pr[\pr^{r-1}; (-m)^{r+2}]\cansub\pr=\pr[\pr^{r-1}; (-m)^{r+2}, 0]$. 
Assume that $m<0$ holds. 
The total coordinate ring of $\pr$ is the $\Z^{\oplus 2}$-graded polynomial ring 
\[
\Bbbk[x_0, \dots, x_{r-1}, y_0, y_1, \dots, y_{r+2}]
\]
with the grading
\begin{eqnarray*}
\deg x_i & = & (1, 0) \ (1\leq i\leq r-1),\\
\deg y_0 & = & (0, 1),\\
\deg y_i & = & (m, 1) \ (1\leq i\leq r+2).
\end{eqnarray*}
$X$ is obtained by a graded equation of bidegree $(2m, 1)$. 
Since $m<0$, any bidegree $(2m, 1)$ polynomial is obtained by linear combinations of 
the elements in $\{y_iy_j\}_{1\leq i\leq j\leq r+2}$. Then any divisor 
obtained by graded equations with the grade $(2m, 1)$ have singular points 
along the points defined by the graded equations $y_1=\dots=y_{r+2}=0$ 
by the Jacobian criterion. 
This is a contradiction since $X$ must be a smooth divisor. 
Therefore $m\geq 0$. 
\end{proof}

Hence the above exact sequence splits. We now normalize the bundle structures 
for simplicity. That is, we rewrite 
\[
H:=\pr[\pr^{r-1}; 0^{r+2}]\cansub\pr:=\pr[\pr^{r-1}; 0^{r+2}, m]
\]
with $m\geq 0$, 
$X$ is a smooth divisor on $\pr$ with 
$X\in|\sO(0; 2)|$ and $D=X\cap H$ and $D$ is smooth. 
Since $H\simeq\pr^{r-1}\times\pr^{r+1}$ and $D\simeq\pr^{r-1}\times\Q^r$, 
we can take the pull back of a point $S(\simeq\pr^{r-1})\subset H\simeq\pr^{r-1}\times\pr^{r+1}\xrightarrow{p_2}\pr^{r+1}$ 
in $\pr^{r+1}$ such that $S\cap D=\emptyset$. 
We can assume that $S$ is the section of $p\colon\pr\rightarrow\pr^{r-1}$ 
obtained by the canonical first projection, that is, 
\[
S=\pr[\pr^{r-1}; 0]\cansub\pr=\pr[\pr^{r-1}; 0^{r+2}, m].
\]
Then the relative linear projection from $S$ over $\pr^{r-1}\simeq Y$ 
obtains the morphism 
\[
\sigma\colon\pr\setminus S\rightarrow 
X':=\pr[\pr^{r-1}; 0^{r+1}, m]
\]
over $\pr^{r-1}\simeq Y$. 
The restriction of $\sigma$ to $X$ gives a double cover morphism 
$\tau\colon X\rightarrow X'$.
It is easy to show that the branch divisor $B\subset X'$ of $\tau$ 
is the smooth divisor on $X'$ such that $B\in|\sO(0; 2)|$. 
Since the strict transform of the divisor 
\[
D':=\pr[\pr^{r-1}; 0^{r+1}]\cansub X'=\pr[\pr^{r-1}; 0^{r+1}, m]
\]
in $X'$ is exactly $D$, 
the intersection $B\cap D'$ is also smooth. 
This is exactly the case in Example \ref{fanoQ}. 

Now, we consider the case \eqref{structure_cor2d}. We write $\sE:=\pi_*\sO_X(L)$, then 
\[
X\simeq\pr_{\pr^{r-1}}(\sE)\xrightarrow{p}\pr^{r-1}.
\]
We can write $\sO_{\pr}(1)|_D\simeq\sO_{\pr^{r-1}\times\Q^r}(-m, 1)$ for some 
integer $m\in\Z$, where $\sO_\pr(1)$ is the tautological invertible sheaf on $X$ 
with respect to the projection $p$. Hence 
$\sE\simeq(p|_D)_*(\sO_{\pr}(1)|_D)\simeq\sO_{\pr^{r-1}}(-m)^{\oplus r+2}$ holds 
by Lemma \ref{PQlem}. Therefore $X\simeq\pr^{r-1}\times\pr^{r+1}$, 
which is exactly the case in Example \ref{pP}.

\underline{The case \eqref{main2}}

For convenience, let $m_1:=m$, where $m$ is in \eqref{main2}.
Then only the case \eqref{structure_cor2c} can occurs 
since $\pi|_{D_1}$ is a $\pr^r$-bundle over $Y\simeq\pr^{r-1}$. 
We note that $D$ has exactly two irreducible components $D_1$ and $D_2$ 
since $E_1$ is irreducible. 
We note that $(D_2\cdot R)>0$ since $\pi$ is of fiber type 
and $\pi|_{E_1}$ is surjective. Hence $\rho(D_2)\geq 2$ by the 
previous argument. Therefore $\pi|_{D_2}\colon D_2\rightarrow Y$ is isomorphic to 
\[
\pr[\pr^{r-1}; 0^r, m_2]\xrightarrow{p}\pr^{r-1}
\]
with $m_2\geq 0$. That is, $D_2$ also satisfy 
the case \eqref{main2} by repeating the same argument. 
We can assume $0\leq m_1\leq m_2$. 
Under the isomorphism 
$D_1\simeq\pr[\pr^{r-1}; 0^r, m_1]$, 
we can write $\sN_{D_1/X}\simeq\sO(u; 1)$ with 
$u\in\Z$. 
We have $u=-m_2$ since $\sN_{D_1/X}|_{D_1\cap D_2}\simeq\sN_{D_1\cap D_2/D_2}$ 
and $\sN_{D_1\cap D_2/D_2}\simeq\sO_{\pr^{r-1}\times\pr^{r-1}}(-m_2, 1)$. 
Hence 
\begin{eqnarray*}
p_*\sN_{D_1/X} & \simeq &  
p_*(p^*\sO_{\pr^{r-1}}(-m_2)\otimes\sO_{\pr}(1))\\
 & \simeq & \sO_{\pr^{r-1}}(-m_2)^{\oplus r}\oplus\sO_{\pr^{r-1}}(m_1-m_2).
\end{eqnarray*}
Thus the exact sequence 
\[
0\rightarrow\sO_{\pr^{r-1}}\rightarrow\pi_*\sO_X(D_1)\rightarrow
\sO_{\pr^{r-1}}(-m_2)^{\oplus r}\oplus\sO_{\pr^{r-1}}(m_1-m_2)\rightarrow 0
\]
splits since $m_1\leq m_2$. Therefore 
\[
X\simeq\pr[\pr^{r-1}; 0^r, m_1, m_2]
\]
with $0\leq m_1\leq m_2$. 
We note that $D\in|\sO(-m_1-m_2; 2)|$ by Corollary \ref{scroll_cor2}. 
This is exactly the case in Example \ref{kayaku}.

\noindent\textbf{The cases \eqref{main3}--\eqref{main6}}

Next, we consider the cases \eqref{main3}--\eqref{main6}. 
Then $\dim(\pi|_{D_1})^{-1}(y)=r-1$ 
for any closed point $y\in Y$. Hence 
only the case \eqref{structure_cor1} in Proposition \ref{structure_cor} occurs.

\underline{The case \eqref{main3}}

In this case, $Y$ is isomorphic to $\Q^r$. 

First, we consider the case $r=2$. 
$\pi|_D$ is isomorphic to  $p_{23}\colon\pr^1\times\pr^1\times\pr^1\rightarrow\pr^1\times\pr^1$ 
and we can write $\sN_{D/X}\simeq\sO_{\pr^1\times\pr^1\times\pr^1}(1, -m_1, -m_2)$ 
with $m_1$, $m_2\in\Z$. 

\begin{claim}
$m_1$, $m_2\geq 0$ holds. 
\end{claim}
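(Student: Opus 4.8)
The plan is to present $X$ as a projective bundle over $Y\simeq\Q^2\simeq\pr^1\times\pr^1$ by means of Lemma \ref{lemP}, read off all the numerical invariants, and then force $m_1,m_2\geq0$ out of the ampleness of $L$.

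In case \eqref{main3} we are in situation \eqref{structure_cor1} of Proposition \ref{structure_cor}, so $\pi\colon X\rightarrow Y$ is a $\pr^2$-bundle and $\pi|_D\colon D\rightarrow Y$ is a $\pr^1$-subbundle. First I would apply Lemma \ref{lemP} with $c=2$: setting $\sE:=\pi_*\sO_X(D)$ (locally free of rank $3$) one gets an isomorphism $X\simeq\pr_Y(\sE)$ under which $\sO_X(D)\simeq\sO_\pr(1)$, and, pushing forward $0\rightarrow\sO_X\rightarrow\sO_X(D)\rightarrow\sN_{D/X}\rightarrow0$ (using $\pi_*\sO_X=\sO_Y$ and $R^1\pi_*\sO_X=0$), an exact sequence
\[
0\rightarrow\sO_Y\rightarrow\sE\rightarrow(\pi|_D)_*\sN_{D/X}\rightarrow0 .
\]
Since $\pi|_D$ is identified with $p_{23}\colon\pr^1\times\pr^1\times\pr^1\rightarrow\pr^1\times\pr^1$ and $\sN_{D/X}\simeq\sO(1,-m_1,-m_2)$, the projection formula gives $(\pi|_D)_*\sN_{D/X}\simeq\sO_Y(-m_1,-m_2)^{\oplus2}$, so that $\det\sE$ has class $-2m_1H_1-2m_2H_2$ in $\Pic(Y)=\Z H_1\oplus\Z H_2$, where $H_1,H_2$ are the two rulings of $Y$. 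Writing $\xi$ for the tautological divisor class on $\pr_Y(\sE)$ and using the relative canonical bundle formula $\omega_{X/Y}\simeq\sO_\pr(-3)\otimes\pi^*(\det\sE)$ together with $D\sim\xi$ and $-K_Y\sim 2H_1+2H_2$, I would compute
\[
-(K_X+D)\sim 2\xi+(2+2m_1)\pi^*H_1+(2+2m_2)\pi^*H_2 ,
\]
hence, by Lemma \ref{picard}, $L\sim\xi+(1+m_1)\pi^*H_1+(1+m_2)\pi^*H_2$.

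Finally I would invoke ampleness. Since $L$ is ample and $L\sim\xi+\pi^*\bigl((1+m_1)H_1+(1+m_2)H_2\bigr)$, the rank-$3$ bundle $\sE':=\sE\otimes\sO_Y\bigl((1+m_1)H_1+(1+m_2)H_2\bigr)$ on $Y\simeq\pr^1\times\pr^1$ is ample (Hartshorne's characterization of ampleness of a projective bundle). Restricting $\sE'$ to a line $\ell$ with $\ell\cdot H_1=1$ and $\ell\cdot H_2=0$ yields an ample rank-$3$ bundle on $\pr^1\simeq\ell$, which is therefore a direct sum of three line bundles each of degree $\geq1$, so $\deg(\sE'|_\ell)\geq3$; on the other hand $\deg(\sE'|_\ell)=\deg((\det\sE')|_\ell)=3+m_1$ by the computation of $\det\sE$ above, whence $m_1\geq0$. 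Restricting instead to a line in the other ruling gives $m_2\geq0$ in the same way. I expect no genuine obstacle: once $X\simeq\pr_Y(\sE)$ is in place the computation is essentially bookkeeping, and the only points demanding care are the sign convention in the relative canonical bundle formula for the Grothendieck projectivization and the normalization $D\sim\xi$ dictated by the particular choice $\sE=\pi_*\sO_X(D)$ in Lemma \ref{lemP}.
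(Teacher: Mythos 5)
Your argument is correct, and it reaches the claim by a genuinely different route from the paper. The paper slices: it takes a fiber $f=\{t\}\times\pr^1$ of a ruling of $Y\simeq\pr^1\times\pr^1$, observes that $(X_f,D_f)$ with $X_f=\pi^{-1}(f)$ is a $3$-dimensional log Fano manifold with $\rho(X_f)=2$ whose log Fano index is even, and then reads off $m_i\geq 0$ from the already-established classification of such pairs (Proposition \ref{dP}, i.e.\ Example \ref{2rminusone}), via $\sN_{D_f/X_f}\simeq\sN_{D/X}|_{D_f}$. You instead work globally: you identify $X\simeq\pr_Y(\pi_*\sO_X(D))$ via Lemma \ref{lemP}, compute $\det\pi_*\sO_X(D)$ from the pushed-forward normal bundle sequence, express $L$ as the tautological class of the twist $\sE'$, and extract $m_i\geq 0$ from ampleness of $\sE'|_\ell$ on a ruling curve $\ell\simeq\pr^1$ via the Grothendieck splitting and the determinant degree $3+m_i$. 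Your computations check out (the sign conventions agree with Lemma \ref{scroll_lem} \eqref{scroll_lem2}, and $(p_{23})_*\sN_{D/X}\simeq\sO_Y(-m_1,-m_2)^{\oplus 2}$ is exactly Lemma \ref{PQlem}). What the paper's route buys is brevity by reusing the inductive structure of the classification; what yours buys is self-containedness --- it needs no appeal to the low-dimensional classification, and as a by-product it already produces the explicit description of $\pi_*\sO_X(D)$ and of $L$ that the paper derives separately after the claim. The only caveat is organizational rather than mathematical: you invoke the bundle presentation of $X$ (Lemma \ref{lemP}) before the claim, whereas the paper does so just after; since case \eqref{structure_cor1} of Proposition \ref{structure_cor} is already in force at that point, this is harmless.
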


\begin{proof}
It is enough to show $m_1\geq 0$. 
Let $f=\{t\}\times\pr^1\subset \pr^1\times\pr^1\simeq Y$ be an arbitrary fiber 
of $p_1:\pr^1\times\pr^1\rightarrow\pr^1$, where $t\in\pr^1$. 
Let $X_f$ (resp.\ $D_f$) be the intersection of $\pi^{-1}(f)$ and $X$ (resp.\ $D$). 
Then $X_f\rightarrow f$ is a $\pr^2$-bundle, 
$D_f$ is a smooth divisor in $X_f$ with $D_f\neq 0$ and 
\[
\sO_{X_f}(-(K_{X_f}+D_f))\simeq\sO_X(-(K_X+D))|_{X_f}\simeq\sO_X(2L)|_{X_f}.
\]
Thus $(X_f, D_f)$ is a $3$-dimensional log Fano manifold whose log Fano index is an even number and $\rho(X_f)=2$. Hence 
\[
D_f=\pr[\pr^1; 0^2](\simeq\pr^1\times\pr^1)
\cansub X_f=\pr[\pr^1; 0^2, m]
\]
with $m\geq 0$ by Proposition \ref{dP}, thus 
$\sN_{D_f/X_f}\simeq\sO_{\pr^1\times\pr^1}(1, -m)$. 
Since $\sN_{D_f/X_f}\simeq\sN_{D/X}|_{D_f}\simeq\sO_{\pr^1\times\pr^1}(1, -m_1)$, 
we have $m_1=m\geq 0$. 
\end{proof}

We know that ${p_{23}}_*\sN_{D/X}\simeq\sO_{\pr^1\times\pr^1}(-m_1, -m_2)^{\oplus 2}$ 
by Lemma \ref{PQlem}. Hence we can show that the exact sequence 
obtained by Lemma \ref{lemP}
\[
0\rightarrow\sO_{\pr^1\times\pr^1}\rightarrow\pi_*\sO_X(D)\rightarrow
\sO_{\pr^1\times\pr^1}(-m_1, -m_2)^{\oplus 2}\rightarrow 0
\]
splits. Hence we we  can show that 
\[
D=\pr_{\pr^1\times\pr^1}(\sO_{\pr^1\times\pr^1}^{\oplus 2})\cansub
X=\pr_{\pr^1\times\pr^1}(\sO_{\pr^1\times\pr^1}^{\oplus 2}\oplus\sO_{\pr^1\times\pr^1}(m_1, m_2))
\]
with $0\leq m_1\leq m_2$ 
by Lemma \ref{lemP}. 
This is exactly the case in Example \ref{rtwo}.

We now consider the remaining case $r\geq 3$. 
We can write the normal sheaf $\sN_{D/X}\simeq\sO_{\pr^{r-1}\times\Q^r}(1, -m)$ with $m\in\Z$. 
Then $(\pi|_D)_*\sN_{D/X}\simeq\sO_{\Q^r}(-m)^{\oplus r}$ 
by Lemma \ref{PQlem}. 
Hence we can see that the exact sequence 
obtained by Lemma \ref{lemP}
\[
0\rightarrow\sO_{\Q^r}\rightarrow\pi_*\sO_X(D)
\rightarrow\sO_{\Q^r}(-m)^{\oplus r}\rightarrow 0
\]
splits. Hence we can show that 
\[
D=\pr[\Q^r; 0^r]\cansub X=\pr[\Q^r; 0^r, m]
\]
by Lemma \ref{lemP}. This is exactly the case 
in Example \ref{rthree}; the divisor $-(K_X+D)$ is ample 
if and only if $m\geq 0$ by Remark \ref{rthreermk}.

\underline{The case \eqref{main4}}

We can write $(\pi|_D)_*\sN_{D/X}\simeq T_{\pr^r}\otimes\sO_{\pr^r}(-m)$ 
with $m\in\Z$ 
by Lemma \ref{lemP}. Hence we obtain the exact sequence such that the 
right hand of the sequence has been seen 
in Lemma \ref{lemP}:
\[
0\rightarrow\sO_{\pr^r}\rightarrow\pi_*\sO_X(D)\rightarrow T_{\pr^r}\otimes\sO_{\pr^r}(-m)\rightarrow 0.
\]
It is well known that 
\[
\Ext_{\pr^r}^1(T_{\pr^r}\otimes\sO_{\pr^r}(-m), \sO_{\pr^r})\simeq
\begin{cases}
0 & (m\neq 0)\\
\Bbbk & (m=0).
\end{cases}
\]
We also know that all unsplit exact sequences for the case $m=0$ are obtained by 
the canonical exact sequences 
\[
0\rightarrow\sO_{\pr^r}\rightarrow\sO_{\pr^r}(1)^{\oplus r+1}
\rightarrow T_{\pr^r}\rightarrow 0.
\]
If the exact sequence is not split, then $X\simeq\pr^r\times\pr^r$ 
by the above argument. This case has been considered by Example \ref{Pp}. 
If the exact sequence splits, then we can show that
\[
D=\pr_{\pr^r}(T_{\pr^r})\cansub X=\pr_{\pr^r}(T_{\pr^r}\oplus\sO_{\pr^r}(m)).
\] 
This case has been considered 
by Example \ref{Tp}; the divisor  $-(K_X+D)$ is ample 
if and only if $m\geq 1$ by Remark \ref{Tprmk}.

\underline{The case \eqref{main5}}

We can write $(\pi|_D)_*\sN_{D/X}\simeq(\sO_{\pr^r}^{\oplus r-1}\oplus\sO_{\pr^r}(1))
\otimes\sO_{\pr^r}(-m)$ with $m\in\Z$ by Lemma \ref{lemP}. Since $r\geq 2$, 
the exact sequence 
\[
0\rightarrow\sO_{\pr^r}\rightarrow\pi_*\sO_X(D)\rightarrow
(\sO_{\pr^r}^{\oplus r-1}\oplus\sO_{\pr^r}(1))
\otimes\sO_{\pr^r}(-m)\rightarrow 0
\]
splits. Thus 
\[
X\simeq\pr[\pr^r; 0^{r-1}, 1, m]
\]
and $D\in|\sO(-m; 1)|$. 
Since $\sO_X(-K_X)\simeq\sO(r-m; r+1)$, 
we have $\sO_X(L)\simeq\sO(1; 1)$. 
We know in Corollary \ref{scroll_cor1} \eqref{scroll_cor12} such that $m\geq 0$; 
this case has been considered in Examples \ref{zerozeroone} and \ref{zeroonebig}.

\underline{The case \eqref{main6}}

We can write $(\pi|_{D_1})_*\sN_{D_1/X}\simeq\sO_{\pr^r}(-m)^{\oplus r}$ with $m\in\Z$ 
by Lemma \ref{lemP}. Since $r\geq 2$, 
the exact sequence 
\[
0\rightarrow\sO_{\pr^r}\rightarrow\pi_*\sO_X(D)\rightarrow
\sO_{\pr^r}(-m)^{\oplus r}\rightarrow 0
\]
splits. Thus 
\[
X\simeq\pr[\pr^r; 0^r, m].
\]
We know in Corollary \ref{scroll_cor1} \eqref{scroll_cor12} such that $m\geq 0$ holds; 
this case has been considered in Examples \ref{Pp}, \ref{zerozeroone} and \ref{zerozerobig}.

\subsection{Birational type case}\label{birational_type}

We know that $\pi|_{D_1}\colon D_1\rightarrow \pi(D_1)$ is a birational morphism by 
Lemma \ref{rhoone} \eqref{rhoone1} 
and an algebraic fiber space by Lemma \ref{longray}. 
Hence $\pi|_{D_1}\colon D_1\rightarrow \pi(D_1)$ belongs to 
the cases \eqref{main7} and \eqref{main8}. 
However, we have $\dim(D_1\cap F)=r-1$ for any nontrivial fiber $F$ of $\pi$ 
for the case \eqref{main7}; this contradicts to 
Proposition \ref{structure_cor} \eqref{structure_cor1}. 
For the case \eqref{main8}, we have $\dim(D\cap F)=r$ for any nontrivial fiber 
$F$ of $\pi$. 
Thus only the case \eqref{structure_cor2a} in Proposition \ref{structure_cor} occurs. 
That is, $Y$ is smooth and $\pi$ is the blowing up 
along a smooth projective subvariety $W\subset Y$ of 
dimension $r-2$. Let $D_Y:=\pi(D)\subset Y$. 
Then $D_Y\simeq\pr^{2r-1}$, and $W\subset D_Y$ 
is a linear subspace of dimension $r-2$ under the isomorphism $D_Y\simeq\pr^{2r-1}$. 
Let $E\subset X$ be the exceptional divisor of $\pi$. 
Then $\pi^*D_Y=D+E$. 
We note that there exists a divisor $L_Y$ on $Y$ such that 
$\pi^*\sO_Y(L_Y)\simeq\sO_X(L+E)$ by Theorem \ref{cone} 
since $(E\cdot C)=-1$ and $(L\cdot C)=1$. 
Therefore
\[
\sO_Y(rL_Y)\simeq\sO_Y(-(K_Y+D_Y))
\]
by Theorem \ref{cone} since $\pi^*\sO_Y(rL_Y)\simeq\sO_X(rL+rE)
\simeq\sO_X(-(K_X+D)+rE)\simeq\sO_X(-\pi^*K_Y-D-E)\simeq\pi^*\sO_Y(-(K_Y+D_Y))$. 

\begin{claim}\label{cont_ample}
$(Y, D_Y)$ is also a $2r$-dimensional log Fano manifold whose log Fano index is 
divisible by $r$. 
\end{claim}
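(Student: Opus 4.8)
The plan is to obtain the claim by reducing everything to the single statement that $-(K_Y+D_Y)$ is ample, and then reading off the rest from data already assembled above. First I would dispose of the formal points: $Y$ is smooth by Proposition~\ref{structure_cor}~\eqref{structure_cor2a} and is $2r$-dimensional because $\pi$ is birational, while $D_Y\simeq\pr^{2r-1}$ is a smooth prime divisor, hence a connected, nonzero simple normal crossing divisor; so $(Y,D_Y)$ is at least a $2r$-dimensional log manifold. Since the isomorphism $\sO_Y(rL_Y)\simeq\sO_Y(-(K_Y+D_Y))$ and the relation $\pi^*\sO_Y(L_Y)\simeq\sO_X(L+E)$ have already been proved, once $-(K_Y+D_Y)$ is known to be ample it follows simultaneously that $(Y,D_Y)$ is a log Fano manifold and that $r$ divides its log Fano index. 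Thus the entire content of the claim is the ampleness of $-(K_Y+D_Y)$.

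To prove that ampleness I would apply Kleiman's criterion on $Y$. As $(X,D)$ is a log Fano manifold, $\overline{\NE}(X)=\NE(X)$ is polyhedral (Theorem~\ref{cone}), spanned by finitely many extremal rays, one of which is $R$. Since $\pi=\cont_R$, the cone $\overline{\NE}(Y)=\pi_*\overline{\NE}(X)$ is again polyhedral, and each of its extremal rays is of the form $\pi_*R'$ for some extremal ray $R'\neq R$ of $X$. Hence, using $\pi^*\sO_Y(-(K_Y+D_Y))\simeq\sO_X(r(L+E))$ and the projection formula, $-(K_Y+D_Y)$ is ample if and only if
\[
r\bigl((L+E)\cdot C'\bigr)=\bigl(-(K_Y+D_Y)\cdot\pi_*C'\bigr)>0
\]
for a minimal rational curve $C'$ of every extremal ray $R'\neq R$ of $X$.

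I would then split into two cases. If $C'\not\subset E$, then $(E\cdot C')\geq 0$ and $(L\cdot C')>0$ since $L$ is ample, so $(L+E)\cdot C'>0$. If $C'\subset E$, then $[C']\notin R$ forces $C'$ to be non-contracted by $\pi$, while $\pi(E)=W\subseteq D_Y$; hence $\pi_*C'$ is a nonzero effective $1$-cycle supported on $D_Y\simeq\pr^{2r-1}$. By adjunction $-(K_Y+D_Y)|_{D_Y}\simeq\omega_{D_Y}^\vee\simeq\sO_{\pr^{2r-1}}(2r)$, which is ample, so $\bigl(-(K_Y+D_Y)\cdot\pi_*C'\bigr)>0$ again. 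This establishes the ampleness, and with it the claim.

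The only genuinely delicate step I anticipate is the case $C'\subset E$: this is exactly the configuration in which positivity of the pulled-back divisor could fail numerically, and it is salvaged precisely because $\pi$ pushes the exceptional divisor $E$ into $D_Y$, so that such curves are controlled by the (automatically ample) restriction $\omega_{D_Y}^\vee$. The remaining ingredients — polyhedrality of $\overline{\NE}(X)$, the identification of $\overline{\NE}(Y)$ with its image, and the treatment of curves not contained in $E$ — are routine.
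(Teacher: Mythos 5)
Your proof is correct and follows essentially the same strategy as the paper's: reduce the claim to ampleness of $L_Y$ (equivalently of $-(K_Y+D_Y)\sim rL_Y$), apply Kleiman's criterion to the closed polyhedral cone of curves on $Y$, handle curves meeting the exceptional locus properly via the computation $((L+E)\cdot C')>0$, and rescue the degenerate case using that $\pi(E)=W$ sits inside $D_Y\simeq\pr^{2r-1}$. The two implementation differences are harmless: you test positivity only on the extremal rays of $\overline{\NE}(Y)=\pi_*\NE(X)$ by pulling them back to curves upstairs, whereas the paper tests all irreducible curves on $Y$ via strict transforms; and for curves contained in the exceptional locus you compute $-(K_Y+D_Y)|_{D_Y}\simeq\omega_{D_Y}^{\vee}\simeq\sO_{\pr^{2r-1}}(2r)$ by adjunction, whereas the paper invokes numerical proportionality of curves in $\pr^{2r-1}$ to reduce to the first case. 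One small caveat: not every extremal ray $R'\neq R$ of $X$ need be $K_X$-negative, so ``minimal rational curve of $R'$'' is not always defined in the paper's sense; you should simply take a rational curve spanning $R'$, which Theorem~\ref{cone} supplies and which is all your argument actually uses.
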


\begin{proof}
It is enough to show that $L_Y$ is an ample divisor on $Y$. 
We know that $\NE(Y)$ is a closed convex cone since so is $\NE(X)$. 
Hence it is enough to show that $(L_Y\cdot C_Y)>0$ 
for any irreducible curve $C_Y\subset Y$. 
If $C_Y\not\subset W$, taking the strict transform $\widehat{C}_Y$ of $C_Y$ in $X$, 
then $(L_Y\cdot C_Y)=(L\cdot\widehat{C}_Y)+(E\cdot\widehat{C}_Y)>0$. 
Hence it is enough to show the case $C_Y\subset W$. 
We note that $W\subset D_Y$ and all curves in $D_Y$ are numerically proportional 
since $D_Y\simeq\pr^{2r-1}$. Therefore we can reduce to 
the case $C_Y\not\subset W$.
\end{proof}

Since $D_Y\simeq\pr^{2r-1}$, we have $\rho(Y)=1$ by Lemma \ref{rhoone} \eqref{rhoone2}. 
Therefore $Y\simeq\pr^{2r}$ and $D_Y$ is a hyperplane under this isomorphism 
by \cite[Theorem 7.18]{fujitabook}. This is exactly the case in Example \ref{burouappu}.

Therefore we have completed the proof of Theorem \ref{mukai1}.



\noindent K.\ Fujita

Research Institute for Mathematical Sciences (RIMS),
Kyoto University, Oiwake-cho, 

Kitashirakawa, Sakyo-ku, Kyoto 606-8502, Japan 

fujita@kurims.kyoto-u.ac.jp
\end{document}